\documentclass[10pt]{amsart}

\usepackage{array}
\usepackage{cancel}
\usepackage[british]{babel}
\usepackage{pifont,subfigure,graphicx}
\usepackage{caption}
\usepackage[colorlinks,citecolor=blue,urlcolor=blue]{hyperref}
\usepackage{url}
\usepackage{amsthm,amsmath,amssymb,amsfonts,mathrsfs,amsfonts,amstext,amsopn}
\usepackage{mathtools}
\usepackage{tikz}
\usepackage[normalem]{ulem}
\usepackage{hhline}
\usepackage[version=3]{mhchem}
\usepackage{texdraw}
\usepackage{extarrows}
\usepackage{bbm}
\usepackage{enumitem}
\usepackage[all]{xy}

\usepackage[tableposition=top]{caption}
\usepackage{longtable}
\usepackage{pgfplots}
\pgfplotsset{compat = newest}
\usetikzlibrary{decorations.fractals}
\usetikzlibrary{positioning}
\usetikzlibrary{arrows}

     \setcounter{page}{1}
\numberwithin{equation}{section}
\theoremstyle{plain}
\newtheorem{theorem}{Theorem}[section]
\newtheorem{lemma}[theorem]{Lemma}
\newtheorem{corollary}[theorem]{Corollary}
\newtheorem{proposition}[theorem]{Proposition}

\theoremstyle{definition}

\newtheorem{example}[theorem]{Example}
\theoremstyle{remark}

\newcommand{\tS}{\text{S}}
\newcommand{\ka}{\kappa}
\newcommand{\lf}{\lfloor}
\newcommand{\rf}{\rfloor}
\newcommand{\lc}{\lceil}
\newcommand{\rc}{\rceil}
\newcommand{\lt}{\left}
\newcommand{\rt}{\right}

\newcommand{\om}{\omega}
\newcommand{\N}{\mathbb{N}}

\newcommand{\R}{\mathbb{R}}
\newcommand{\Z}{\mathbb{Z}}
\newcommand{\cF}{\mathcal{F}}

\newcommand{\cT}{\Omega}

\newcommand{\si}{{\sf i}}
\newcommand{\so}{{\sf o}}

\newcommand{\sL}{L}
\newcommand{\sU}{U}

\newcommand{\sN}{{\sf N}}
\newcommand{\sE}{{\sf E}}
\newcommand{\sP}{{\sf P}}
\newcommand{\sQ}{{\sf Q}}
\newcommand{\sT}{{\sf T}}

\newcommand\red[1]{\textcolor{black}{#1}}

\allowdisplaybreaks
\makeatletter
\newcommand{\thickhline}{
    \noalign {\ifnum 0=`}\fi \hrule height 1pt
    \futurelet \reserved@a \@xhline
}
\newcolumntype{"}{@{\hskip\tabcolsep\vrule width 1pt\hskip\tabcolsep}}
\makeatother
\begin{document}
\title[Stationary measures of CTMCs with applications to SRNs]
  {Stationary measures of continuous time Markov chains with applications to stochastic reaction networks}
\author{Mads Christian Hansen $^{1}$}
\author{Carsten Wiuf $^{1}$}
\author{Chuang Xu $^{2}$}
\email{wiuf@math.ku.dk (Corresponding author)}
\address{$^1$
Department of Mathematical Sciences,
University of Copenhagen, 2100 Copenhagen, Denmark.}
\address{$^2$
Department of Mathematics\\
University of Hawai'i at M\={a}noa, Honolulu\\
96822, HI, US.}

\date{\today}

\noindent

\begin{abstract} 
We study continuous-time Markov chains on the non-negative integers under mild regularity conditions (in particular, the set of jump vectors is finite and both forward and backward jumps are possible). Based on the so-called flux balance equation, we derive an iterative formula for calculating stationary measures.  Specifically, a stationary measure $\pi(x)$ evaluated at $x\in\N_0$ is  represented as a linear combination of  a few generating terms, similarly to the characterization of a stationary measure of a birth-death process, where there is only one generating term, $\pi(0)$. The coefficients of the linear combination are recursively determined in terms of the transition rates of the Markov chain.  For the class of Markov chains we consider, there is always at least one stationary measure (up to a scaling constant). We give various results pertaining to uniqueness and non-uniqueness of stationary measures, and show that the dimension of the linear space of signed invariant measures is at most the number of generating terms. A minimization problem is constructed in order to compute  stationary  measures numerically. Moreover, a heuristic linear approximation scheme is suggested for the same purpose   by first approximating  the generating terms. The correctness of the linear approximation scheme is justified in some special cases. Furthermore, a decomposition of the state space into different types of states (open and closed irreducible classes, and trapping, escaping and neutral states) is presented. Applications to stochastic reaction networks  are well illustrated.
\end{abstract}

\keywords{Recurrence, explosivity, stationary distribution, signed invariant measure}

\maketitle

\section{Introduction}
Stochastic models of interacting particle systems are often formulated in terms of continuous-time Markov chains (CTMC) on a discrete state space. These models find application in population genetics, epidemiology  and biochemistry, where the particles are known as  \emph{species}.  A natural and accessible framework for representing   interactions between species is  a  stochastic reaction network,  where  the underlying graph captures the possible jumps between states and the interactions between species. In the case where the reaction network consists of a single species, it is referred to as a \emph{one-species reaction network}. Such networks frequently arise in various applications, ranging from SIS models in epidemiology \cite{PCMV15}   to  bursty chemical processes (for example, in gene regulation) \cite{SS08} and the Schl\"{o}gl model \cite{FMD17}. 
One often focuses on examining the long-term dynamic behavior of the system, which can be captured by its corresponding limiting or stationary distribution, provided it exists. Therefore, characterizing the structure of such distributions is of great interest.

Stochastic models of reaction networks, in general, are highly non-linear, posing challenges for analytical approaches. Indeed, the characterization of  stationary distributions remain largely incomplete \cite{W86}, except for specific cases such as mono-molecular (linear) reaction networks \cite{JH07}, complex balanced reaction networks \cite{ACK10}, and when the associated stochastic process is a birth-death process \cite{A91}. To obtain statistical information, it is common to resort to stochastic simulation algorithms \cite{G77, ECM07}, running the Markov chain numerous times. However, this approach can be computationally intensive, rendering the analysis of complex reaction networks infeasible \cite{CM14}. Furthermore, it fails to provide analytical insights into the system.

We investigate one-species reaction networks  on the non-negative integers $\mathbb{N}_0$, and  present an analytic characterization of   stationary measures for general continuous-time Markov chains, subject to mild and natural regularity conditions (in particular, the set of jump vectors is finite and both forward and backwards jumps are possible), see Proposition \ref{pro-2}.  Furthermore, we provide a decomposition of the state space into different types of states: neutral, trapping, and escaping states, and positive and quasi-irreducible components  (Proposition \ref{pro-1old}). Based on this characterization, we provide an iterative formula to calculate $\pi(x)$, $x\in\N_0$, for any stationary measure $\pi$, not limited to stationary distributions, in terms of a few generating terms \red{(Theorem \ref{th-1})}; similarly to the characterization of the stationary distribution/measure of a birth-death process with one generating term $\pi(0)$. The iterative formula is derived from the general flow balance equation \cite{Ke11}. 

Moreover, we show that the linear subspace of signed invariant measures has dimension at most the number of generating terms and that each signed invariant measure is given by the iterative formula and a vector of generating terms \red{(Theorem \ref{th:posneg})}. We use  \cite{H57} to argue there always exists a stationary measure and give conditions for uniqueness and non-uniqueness \red{(Corollary \ref{cor:fraction}, Corollary \ref{Cor:uniqueness}, Theorem \ref{th:upward}, Lemma \ref{le:stationary-measure})}. Furthermore, we demonstrate by example that there might be two or more linearly independent stationary measures \red{(Example \ref{ex:nonunique})}.  As birth-death processes has a single generating term, then there cannot be a signed invariant measure taking values with both signs.

Finally, we demonstrate how the iterative formula can be used to approximate a stationary measure. Two methods are discussed: convex optimization \red{(Theorem \ref{th:scheme1})} and a heuristic linear approximation scheme \red{(Lemma \ref{lem:scheme3})}. We establish conditions under which the linear approximation scheme is correct, and   provide simulation-based illustrations to support the findings. Furthermore, we observe that even when the aforementioned conditions are not met, the linear approximation scheme still produces satisfactory results. In particular, it   allows us to recover stationary measures in certain cases. This  approach offers an alternative to truncation approaches \cite{GMK17, KTSB19} and  forward-in-time simulation techniques such as Gillespie's algorithm \cite{G77}.

\section{Preliminaries}

\subsection{Notation}

Let  $\R_{\ge 0}$, $\R_{> 0}$, $\R$ be the  non-negative real numbers,  the positive real numbers, and the real numbers, respectively, $\Z$ the integers, $\N$ the natural numbers and $\N_0=\N\cup\{0\}$ the non-negative integers.
For $m, n\in\N_0$, let $\R^{m\times n}$ denote the set of $m\times n$ matrices over $\R$.  The sign of $x\in\R$ is  defined as $\text{sgn}(x)=1$ if $x>0$, $\text{sgn}(x)=0$ if $x=0$, and $\text{sgn}(x)=-1$ if $x<0$. We use $\lc \cdot\rc$ to denote the ceiling function, $\lf \cdot\rf$ to denote the floor function, and  $||\cdot||_p$ to denote the $p$-norm. Furthermore, let $\mathbbm{1}_B\colon D\to \{0,1\}$ denote the indicator function of a set $B\subseteq D$, where $D$ is the domain.

\subsection{Markov Chains}

We define a class of CTMCs on $\N_0$ in terms of a finite set of jump vectors and  non-negative transition functions. The setting can be extended to CTMCs on $\N^d_0$ for $d>1$ and to infinite sets of jump vectors.  Let $\cT\subseteq\Z\!\setminus\!\{0\}$ be a finite set and $\cF=\lt\{\lambda_{\om}\colon \om\in\cT\rt\}$ a set of non-negative transition functions on $\N_0$,
$$\lambda_{\om}\colon\N_0\to\R_{\ge0},\quad \om\in\cT.$$
The notation is standard in reaction network literature \cite{AK15}, where we find our primary source of applications.
For convenience, we let
\begin{equation}\label{eq:lambda0}
\lambda_\omega(k)=0\quad \text{for}\quad k<0, \quad\text{and}\quad\lambda_\omega\equiv 0\quad \text{for}\quad\omega\not\in\Omega.
\end{equation}

The transition functions define a $Q$-matrix $Q=(q_{x,y})_{x,y\in \N_0}$ with  $q_{x,y}=\lambda_{y-x}(x)$, $x,y\in\N_0$, and subsequently, a class of CTMCs $(Y_t)_{t\ge 0}$ on $\N_0$ by assigning an initial state $Y_0\in\N_0$.
Since $\Omega$ is finite, there are at most finitely many  jumps from any $x\in\N_0$.
For convenience, we identify the class of CTMCs with $(\Omega,\cF)$.

 A state $y\in\N_0$ is {\em reachable} from $x\in\N_0$
 if there exists a sequence of states $x^{(0)},\ldots,x^{(m)}$, such that $x=x^{(0)}$, $y=x^{(m)}$ and $\lambda_{\omega^{(i)}}(x^{(i)})>0$ with $\omega^{(i)}=x^{(i+1)}-x^{(i)}\in\cT$,  $i=0,\ldots,m-1$.
It is \emph{accessible} if it is reachable from some other state. The state is \emph{absorbing} if  no other states can be reached from it.
 A set $A\subseteq\N_0$ is a \emph{communicating class} of $(\Omega,\cF)$ if any two states in $A$ can be reached from one another, and the set cannot be extended while preserving this property.
  A state $x\in\N_0$ is a {\em neutral state} of $(\Omega,\cF)$ if it is an absorbing state not accessible from any other state,
a {\em trapping state} of $(\Omega,\cF)$ if it is an absorbing state accessible from some other state,
and an {\em escaping state} of $(\Omega,\cF)$ if  it forms its own communicating class and some other state is accessible from it.
A set $A\subseteq\N_0$ is a {\em positive irreducible component} (\emph{PIC}) of $(\Omega,\cF)$ if it is a non-singleton closed  communicating class, and  a {\em quasi-irreducible component} (\emph{QIC}) of $(\Omega,\cF)$ if it is  a non-singleton  open communicating class.

Let $\sN$, $\sT$, $\sE$, $\sP$, and $\sQ$ be the (possibly empty) set of all neutral states, trapping states, escaping states, PICs and QICs of $(\Omega,\cF)$, respectively. Each state has a unique type, hence $\sN$, $\sT$, $\sE$, $\sP$, and $\sQ$ form a decomposition of the state space into disjoint sets.

A \emph{non-zero} measure $\pi$ on a closed irreducible component $A\subseteq \N_0$ of $(\Omega,\cF)$ is a  \emph{stationary measure} of $(\Omega,\cF)$ if $\pi$   is invariant for the $Q$-matrix, that is, if $\pi$ is a  non-negative equilibrium of the  {\em master equation} \cite{G92}:
 \begin{equation}\label{eq:master}
 0=\sum_{\omega\in\cT}\lambda_{\omega}(x-\omega)\pi(x-\omega)-\sum_{\omega\in\cT}\lambda_{\omega}(x)\pi(x),\quad x\in A,
 \end{equation}
and a \emph{stationary distribution} if it is a stationary measure and $\sum_{x\in A}\pi(x)=1$. Furthermore, we say  $\pi$  is \emph{unique}, if it is  \emph{unique up to a scaling constant}. We might leave out `on $A$' and just say $\pi$ is stationary measure of $(\Omega,\cF)$, when the context is clear.

\subsection{Stochastic reaction networks (SRNs)}
 
 \red{For clarity, we only introduce one-species SRNs and  not  SRNs in generality \cite{AK15}, as one-species  SRNs  are our primary application area.  A one-species SRN is a finite  labelled digraph   $(\mathcal{C},\mathcal{R})$ with an associated CTMC on $\N_0$. The elements of $\mathcal{R}$ are   \emph{reactions}, denoted by $y\ce{->[\eta]} y'$, where $y,y'\in\mathcal{C}$ are   \emph{complexes} and the label is a non-negative intensity  function on $\N_0$. Each complex is   an integer multiple   of the species $\tS$, that is, $n\tS$ for some $n$.   In examples, we simply draw the reactions as  in the following example with  $\mathcal{C}=\{0, \tS, 2\tS, 3\tS\}$, and four reactions:}
\begin{equation}\label{eq:example}
4\tS\ce{->[\eta_1]}  2\tS \ce{<=>[\eta_2][\eta_3]} 0 \ce{->[\eta_4]}6\tS.
\end{equation} 

\red{For convenience, we represent   the complexes as elements of $\N_0$ via the natural embedding, $n\tS\mapsto n$, and number  the reactions as in the example above.  The associated stochastic process $(X_t)_{ t\geq 0}$ can be given as
\begin{align*}
X_t=X_0+\sum_{k=1}^{\# \mathcal{R}}(y'_k-y_k) Y_k\left(\int_0^t \eta_k(X_s)\,ds\right),
\end{align*}
where $Y_k$ are independent  unit-rate Poisson processes and $\eta_k\colon\mathbb{N}_0\to[0,\infty)$ are intensity functions \cite{AK15,EK,markov}. 
 By varying the initial  species count  $X_0$, a whole family of Markov chains is associated with the SRN.
}

A common choice of intensity functions is that of stochastic \emph{mass-action kinetics} \cite{AK15}, 
\begin{align*}
\eta_k(x)=\kappa_k \frac{x!}{(x-y_k)!}, 
\qquad x\in\N_0,
\end{align*}
\red{where $\kappa_k>0$ is the {\em reaction rate constant}  of the $k$th reaction, and the combinatorial factor is the number of ways $y_k$ molecules can be chosen out of $x$ molecules (with order).}   This intensity function satisfies the stoichiometric admissibility condition:
\begin{align*}
 \eta_k(x) > 0 \quad \Leftrightarrow \quad x\geq  y_{k}
\end{align*}
($\ge$ is taken component-wise).
\red{Thus, every reaction can only fire when the copy-numbers of the  species in the current state are no fewer than those of the  source complex}.

We will assume   mass-action kinetics in many examples below and label the reactions with their corresponding reaction rate constants, rather than the intensity functions.
To bring SRNs into the setting of the previous section, we define
$$\red{\Omega=\{y'_k-y_k\,|\, y_k \to  y_k'\in\mathcal{R}\}},$$
$$\red{\lambda_\om(x)=\sum_{  k=1}^{\#\mathcal{R}}1_{\{\omega\}}(y_k'-y_k)\eta_k(x),\quad x\in\N_0,\quad\om\in\Omega.}$$
  
\section{A classification result}

In this section, we classify the state space $\N_0$ into different types of states. In particular, we are interested in characterising the PICs in connection with studying stationary measures. Our primary goal is to understand the class of one-species SRNs, which we study by introducing   a larger class of Markov chains embracing SRNs.

We assume throughout that a class of CTMCs  associated with $(\cT,\cF)$ is given.   Define
\begin{align*}
\Omega_+:=\left\{\omega\in \Omega\colon \text{sgn}(\omega)=1\right\},\quad \Omega_-:=\left\{\omega\in \Omega\colon \text{sgn}(\omega)=-1\right\}
\end{align*}
as the sets of positive and negative jumps, respectively.
\red{To avoid trivialities and for regularity, we make the following  assumptions.}

\medskip
\noindent
{\bf Assumptions.}
\begin{itemize}
\item[({\bf A1})] $\Omega$ is finite, $\cT_-\neq\varnothing$ and $\cT_+\neq\varnothing$.  
\item[({\bf A2})] \red{For $\om\in\Omega$, there exists $\si_\om\in\N_0$ such that $\lambda_\om(x)>0$ if and only if $x\ge \si_\om$}
\end{itemize}

\medskip\noindent
\red{Then, $\si_\om$ is the smallest state for which a jump of size $\om$ can occur ($\si$ is for input). If either of $\Omega_-$ and $\Omega_+$ is empty, then the chain is either a pure death or a pure birth process.  Assumption  ({\bf A1}) and ({\bf A2}) are fulfilled for stochastic mass-action kinetics. }

\red{For the classification, we need some further  quantities. Let $\so_\omega:=\si_\omega+\omega$ be the smallest possible `output' state after applying a jump of size $\om$, and let}
 $$ \si:=\text{min}_{\om\in\cT}\si_{\om}, \quad \si_+:=\text{min}_{\om\in\cT_+}\si_{\om},\quad \so:=\text{min}_{\om\in\cT}\so_{\om} \quad\so_-:=\text{min}_{\om\in\cT_-}\so_{\om}.$$
 \red{Any state $x< \si$ is a trapping or neutral  state  (no jumps can occur from one of these states),   and $\si_+$ is the smallest state for which a forward jump can occur. Similarly, $\so$ is the smallest state that can be reached from any other state and $\so_-$, the smallest state that can be reached by a backward jump. }

\red{In example \eqref{eq:example}, all jumps are   multiples of $2$, that is, the Markov chain is on $2\N_0$ or $2\N_0+1$, depending on the initial state. Generally, the number of infinitely large irreducible classes  is $\om_*=\text{gcd}(\Omega)$, the greatest  \emph{positive} common divisor of $\om\in\Omega$ ($\om_*=2$ in example \eqref{eq:example}).} The following classification result is a consequence of   \cite[Corollary 3.15]{XHW20b}.

\begin{proposition}\label{pro-1old}
Assume $\emph{({\bf A1})}$-$\emph{({\bf A2})}$. Then
$$\sN=\{ 0,\ldots,\min\{\si,\so\}-1\},\quad \sT=\{\so,\ldots,\si-1\},\quad \sE=\{\si,\ldots,\max\{\si_+,\so_-\}-1\}.$$
Furthermore, the  following hold:
\begin{enumerate}
\item If $\#\sT=0$, then $\sQ=\varnothing$, and  $\sP_s=\om_*\N_0+s$,   $s = \so_-,\ldots,\so_-+\om_*-1$, are the PICs,
\item If  $\#\sT\ge \omega_*$,  then  $\sP=\varnothing$, and  $\sQ_s=\om_*\N_0+s$, $s =\si_+,\ldots,\si_++\om_*-1$,  are the QICs,
\item If $0<\#\sT<\omega_*$, then $\sP_s=\om_*\N_0$ $+s$, $s=\si_+,\ldots,\so_-+\om_*-1$, are the PICs, and
$\sQ_s=\om_*\N_0+s$, $s=\so_-+\om_*,\ldots,\si_++\om_* -1$, are the QICs.
.
\end{enumerate}
In either case, there are $\om_*$ PICs and QICs in total. When PICs exist,  these are indexed by $s = \max\{\si_+,\so_-\},\ldots,\so_-+\om_*-1$, and   $\sP\not=\emptyset$ if and only if  $\si_+<\so_-+\om_*$.
\end{proposition}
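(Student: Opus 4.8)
The plan is to treat the statement in two stages: first to pin down the boundary state types $\sN$, $\sT$, $\sE$ by elementary arguments directly from (\textbf{A1})--(\textbf{A2}), and then to classify the $\om_*$ infinite communicating classes as PICs or QICs, the latter step drawing on \cite[Corollary 3.15]{XHW20b}.

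For the first stage, note that by (\textbf{A2}) a jump of size $\om$ is available at $x$ exactly when $x\ge\si_\om$, so $x$ is absorbing iff $x<\si=\min_\om\si_\om$. An absorbing state $x$ is reached by a single jump from another state iff $x=y+\om$ with $\lambda_\om(y)>0$ for some $y$ and $\om$, i.e. iff $x\ge\so_\om$ for some $\om$, i.e. iff $x\ge\so$; and any longer path into an absorbing state must terminate in such a jump, so accessibility of $x<\si$ is equivalent to $x\ge\so$. Partitioning $\{x<\si\}$ by accessibility gives $\sN=\{0,\dots,\min\{\si,\so\}-1\}$ and $\sT=\{\so,\dots,\si-1\}$. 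For the escaping band I would exploit one-directionality: if $x<\si_+$ then no forward jump is available at $x$, and if $x<\so_-$ then any backward jump at $x$ would land at $x+\om\ge\so_\om\ge\so_-$, contradicting $x+\om<x<\so_-$, so no backward jump is available at $x$ either. Hence every $x$ with $\si\le x<\max\{\si_+,\so_-\}$ can move in only one direction; since no state $\le x$ below $\si_+$ admits a forward jump, and no state below $\so_-$ is reachable by a backward jump, in either case $x$ can never be returned to once left. Thus $\{x\}$ is a singleton communicating class from which another state is accessible, i.e. an escaping state, yielding $\sE=\{\si,\dots,\max\{\si_+,\so_-\}-1\}$.

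For the second stage the structural input is that every jump changes the state by a multiple of $\om_*=\gcd(\Omega)$, so residues mod $\om_*$ are preserved and the states above the escaping band split into exactly $\om_*$ arithmetic progressions $\om_*\N_0+s$; by \cite[Corollary 3.15]{XHW20b} each of these is a single infinite communicating class, giving $\om_*$ infinite classes in all. Because residue is preserved, such a class can leak outside itself only through a backward jump from its bottom state landing in the lower, same-residue part of the state space, namely the trapping band $\{\so,\dots,\si-1\}$ or the adjacent escaping band $[\so_-,\si_+)$; a class admitting such a leak is open (a QIC), otherwise closed (a PIC). I would then count, residue by residue, which classes leak: this is controlled by the gap between $\si_+$ (where upward motion begins) and $\so_-$ (the lowest state reachable by a backward jump), together with $\#\sT=\si-\so$. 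When $\#\sT=0$ no class can leak and all $\om_*$ classes are PICs indexed from $\so_-$; when $\#\sT\ge\om_*$ every residue has a trapping state below it and all classes are QICs indexed from $\si_+$; in the intermediate regime the residues $s$ with $\si_+\le s\le\so_-+\om_*-1$ give PICs while the complementary residues $s$ with $\so_-+\om_*\le s\le\si_++\om_*-1$ give QICs. A direct count gives $\so_-+\om_*-\si_+$ PICs and $\si_+-\so_-$ QICs, summing to $\om_*$, and the unified PIC index range $s=\max\{\si_+,\so_-\},\dots,\so_-+\om_*-1$ is nonempty precisely when $\si_+<\so_-+\om_*$.

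The main obstacle I expect is the arithmetic of this last stage: verifying that the leakage criterion for each residue class matches exactly the claimed index ranges, that the stated regimes in $\#\sT$ versus $\om_*$ coincide with the regimes in $\si_+-\so_-$ versus $\om_*$, and that the PIC and QIC index sets together form a complete residue system mod $\om_*$. These reductions rest on the precise relations among $\si,\so,\si_+,\so_-$ and $\om_*$ forced by (\textbf{A1})--(\textbf{A2}) and encapsulated in \cite[Corollary 3.15]{XHW20b}; by contrast the geometric content---one-directional motion in the escaping band, preservation of residues, and leakage only via backward jumps into the lower same-residue states---is comparatively routine.
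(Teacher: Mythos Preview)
Your first stage---the elementary derivation of $\sN$, $\sT$, $\sE$ directly from (\textbf{A1})--(\textbf{A2})---is correct and in fact more explicit than the paper, which simply cites \cite[Corollary~3.15]{XHW20b} for these sets. The one-directionality arguments for the escaping band are sound.

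The second stage, however, is where your plan diverges from the paper and where it is genuinely incomplete. The paper does not argue via a ``leakage'' criterion at all. Instead it reads off from \cite[Corollary~3.15]{XHW20b} an explicit parametrisation of the PICs and QICs by two index sets $\Sigma_0^-$ and $\Sigma_0^+$, where $\Sigma_0^+$ consists of the residues mod $\om_*$ represented in $\sT$ (shifted by one). The key facts $\Sigma_0^+\cap\Sigma_0^-=\emptyset$, $\Sigma_0^+\cup\Sigma_0^-=\{1,\dots,\om_*\}$, and $\#\Sigma_0^+=\min\{\#\sT,\om_*\}$ then reduce the three cases to pure bookkeeping. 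Your leakage heuristic is morally the same classification, but you have not isolated the clean criterion (a class is a QIC iff $\sT$ contains a state of its residue), and your phrasing blurs leakage into $\sT$ with leakage into $\sE$---these behave differently, since escaping states may themselves feed forward into a class rather than out of it.

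There is also a concrete technical step you are missing that the paper handles explicitly: in case (3) one must show $\si=\si_+$. The paper argues by contradiction that if $\si=\si_-<\si_+$ then the backward jump from $\si_-$ would land below $\so_-$ (using $\#\sT<\om_*$), hence in $\sN$, which is impossible. Without this, your claimed PIC index range starting at $\si_+$ does not obviously match the residues in $\Sigma_0^-$, and the ``arithmetic of the last stage'' that you flag as the main obstacle does not close. Your outline would work if you (i) made the QIC criterion precise as ``residue hits $\sT$'', (ii) verified classes cannot leak into $\sE$ alone, and (iii) supplied the $\si=\si_+$ argument for case (3).
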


\begin{proof}
We apply \cite[Corollary~3.15]{XHW20b}.    
To translate the notation of the current paper to that of \cite[Corollary~3.15]{XHW20b}, we put $c=0$, $L_c=\N_0$, $c_*=\max\{\si_+,\so_-\}$, $\om^*=\om_*$, and $\omega^{**}=\omega_*$. Then, the expressions of $\sN$, $\sT$, and $\sE$ naturally follow. As in \cite{XHW20b}, define the following sets,
   $$\Sigma^+_0=\left\{1+v-\left\lfloor\frac{v}{\omega_*}\right\rfloor\omega_*\colon v\in\sT\right\},$$
   $$\Sigma^-_0=\left\{1+v-\left\lfloor\frac{v}{\omega^*}\right\rfloor\omega_*\colon v\in\{\si,\ldots,\so+\om_*-1\}\setminus\sT\right\}.$$
    Since for $v\in\N_0$,
  \[1\le 1+v-\left\lfloor\frac{v}{\omega_*}\right\rfloor\omega_*\equiv 1+v\,\,\text{mod}\,\,\omega_* <1+\omega_*,\]
  we have
$\Sigma_0^+\cap\Sigma_0^-=\emptyset,$ $\Sigma^+_0\cup\Sigma^-_0=\{1,\ldots,\omega_*\}$,
and $\#\Sigma^+_0=\min\{\#\sT,\omega^*\}$. If $\so< \si$, these conclusions follow easily; if $\so\ge\si$, then $\Sigma_0^+=\emptyset$, and the conclusions follow.

  According to \cite[Corollary~3.15]{XHW20b}, it follows that
  \begin{equation}\label{eq:PQ}
\omega_*\left(\N_0+\left\lceil\frac{c_*-(v-1)}{\omega_*}\right\rceil\right)+v-1=\begin{cases}
    \sP^{(v)}_0,\quad v\in\Sigma^-_0,\\
    \sQ^{(v)}_0,\quad v\in\Sigma^+_0,
  \end{cases}
\end{equation}
are      the disjoint PICs and    the disjoint QICs of $(\Omega,\cF)$, respectively, in the terminology of  \cite{XHW20b}. Consequently,  as $\#(\sN\cup\sT\cup\sE)=c_*$,
 \[\bigcup_{v\in\Sigma^-_0\cup\Sigma^+_0}\left( \sP^{(v)}_0\cup \sQ^{(v)}_0\right)=\N_0\setminus(\sN\cup\sT\cup\sE)=\N_0+c_*.\]
Since, for $v\in\Z$,
  \[ c_*\le \omega_*\left\lceil\frac{c_*-(v-1)}{\omega_*}\right\rceil+v-1< \omega_*+c_*,\]
then   we might state \eqref{eq:PQ} as
  \begin{align*}
 \sP^{(v)}_0&=(\omega_*\Z+v-1)\cap(\N_0+c_*),\quad \text{for} \quad v\in\Sigma^-_0, \\
   \sQ^{(v)}_0&=(\omega_*\Z+v-1)\cap(\N_0+c_*),\quad \text{for} \quad v\in\Sigma^+_0.
  \end{align*}

We  show that the expressions given for $\sP^{(v)}_0,\sQ^{(v)}_0$ correspond to those given for $\sP_s,\sQ_s$ in the three cases, by suitable renaming of the indices.
First, note that $\sT=\varnothing$ if and only if $\so\ge\si$.
 From $\so\le\so_-<\si_-:=\text{min}_{\om\in\cT_-}\si_{\om}$ and $\so\ge\si$,   we have $\si=\si_+\le\so_-$, which yields   $c_*=\so_-$. Consequently, $\Sigma_0^+=\emptyset$ and $\sQ=\varnothing$. This proves the expression for  $\sP_s$ in (1).

  Otherwise, if $\so<\si$, then $\so<\si\le\si_+<\so_+:=\text{min}_{\om\in\cT_+}\so_{\om}$, which implies that $\so=\so_-<\si_+$. Hence, $c_*=\si_+$. If $\#\sT\ge\om_*$, then $\sP=\varnothing$, which proves  the expression for $\sQ_s$ in (2).
It remains to prove the last case. If $0<\#\sT<\omega_*$, then
 \begin{align*}
 \sP^{(v+1)}_0&=(\omega_*\Z+v)\cap(\N_0+c_*),\quad \text{for} \quad v=\si,\ldots,\so+\omega_*-1.
    \end{align*}
If $\si=\si_+$, then using the above equation and  $\so=\so_-$, the expression for  $\sP_s$ in (3) follows directly,  and the remaining irreducible classes must be QICs. Finally, we show $\si<\si_+$ is impossible, which concludes the proof. Assume oppositely that $\si<\si_+$. Then, $\si=\si_-$, $\sT=\{\so_-,\ldots,\si_--1\}$, and $\si_-\in \sE$. This implies one can jump from the state $\si_-$ (smallest state for which a backward jump can be made) leftwards to a state $x\le \si_--\omega_*< \so_-$. The latter inequality comes from $0<\#\sT=\si-\so<\omega_*$ and $\so=\so_-$. However, this implies $x\in\sN$, which is impossible.

The total number of PICs and QICs follows from $\Sigma^+_0\cup\Sigma^-_0=\{1,\ldots,\omega_*\}$. The indexation follows  from $c_*=\max\{\si_+,\so_-\}$ in the two case (1) and (3). Also, the inequality $\si_+<\so_-+\om_*$ follows straightforwardly in these two cases. It remains to check that it is not fulfilled in case (2). In that case, $\#\sT=\si-\so\ge\omega_*$ by assumption, hence $\si_+\ge\si\ge \so+\om_*=\so_-+\om_*$, and the conclusion follows.
\end{proof}

In either of the three cases of the proposition, the index $s$ is   the smallest element of the corresponding  class (PIC or QIC). \red{The role of Assumption ({\bf A2}) in Proposition \ref{pro-1old}, together with Assumption ({\bf A1}), is to ensure the non-singleton irreducible classes are infinitely large. If the assumption fails there could be non-singleton finite irreducible classes, even when $\Omega_+\neq\emptyset$.}

 A stationary distribution exists trivially on  each element of $\sN\cup\sT$. If the chain starts in $\sE$, it will eventually be trapped into a closed class, either into $\sT$ or $\sP$, unless absorption is not certain in which case it might be trapped in $\sQ$. If absorption is certain it will eventually leave $\sQ$.  We give two  examples of CTMCs on $\N_0$ showing how the chain might be trapped.

\begin{example}
(A) The reaction network	$\tS\ce{->[]}0,\ 2\tS\ce{<=>[]}3\tS$
has $\Omega=\{1,-1\}$, $\omega_*=1$, $\si_1=2$, $\si_{-1}=1$ (note that there are two reactions with jump size $-1$). Hence $\si=1$, $\si_+=2$ and $\so=\so_-=0$.		It follows from Proposition~\ref{pro-1old} that $\sN=\varnothing$, $\sT=\{0\}$, $\sE=\{1\}$, $\sP=\varnothing$, and $\sQ=\N_0+2$. There is only one infinite class, which is a QIC. From the escaping state, one can only jump backward to the trapping state. The escaping state is reached from $\sQ$.

(B) The reaction network	$\tS\ce{->[]}2\tS,\  2\tS\ce{<=>[]}3\tS$
has $\Omega=\{1,-1\}$, $\omega_*=1$, $\si_1=1$,  $\si_{-1}=3$ (note that there are two reactions with jump size $1$). Hence, $\si=1$, $\si_+=1$ and $\so=\so_-=2$ . It follows  from Proposition~\ref{pro-1old} that $\sN=\{0\}$, $\sT=\varnothing$, $\sE=\{1\}$, $\sP=\N_0+2$ and $\sQ=\varnothing$. There is only one infinite class, which is a PIC. From the escaping state, one can only jump forward to this PIC.
\end{example}

\section{Characterization of stationary measures}\label{sec:charac}

We present an  identity for stationary measures based on the flux balance equation \cite{Ke11}, see also \cite{XHW20c}. It  provides means to express any term of a stationary measure  as a linear combination with real coefficients of the   generating terms. The coefficients are determined recursively, provided ({\bf A1}) and ({\bf A2}) are fulfilled. 

To smooth the presentation we assume without loss of generality  that
\begin{itemize}
\item[({\bf A3})]    $\omega_*=1$, $s=0$ and   $\sP_0=\N_0$.
\end{itemize}
Hence, we assume $\N_0$ is a PIC and  remove the index $s$ from the notation for convenience. For general $(\Omega,\cF)$ with $\omega_*\ge1$ and $s\in\{\max\{\si_+,\so_-\},\ldots,\so_-+\om_*-1\}$, we translate the Markov chain to one fulfilling ({\bf A3}) for each $s$ by defining $(\Omega_*,\cF_s)$ by $\Omega_*=\Omega \omega_*^{-1}$ (element-wise multiplication) and $\cF_s=\{\lambda^s_{\omega}\colon \omega\in\Omega_*\}$ with $\lambda^s_\omega(x)=\lambda_{\omega\omega_*}( \omega_*x+s)$, $x\in\N_0$.  Hence, there are no loss in assuming ({\bf A3}).   \red{We assume $({\bf A1})$-$({\bf A3})$ throughout Section \ref{sec:charac} unless otherwise stated.}

\red{Let $\pi$ be any stationary measure of $(\Omega,\cF)$  on  $ \N_0 $. Define $\om_+,\om_-$ to be the largest positive and negative jump sizes, respectively,
\begin{equation}\label{eq:plusminus}
\om_+=\max\,\Omega_+,\quad \om_-=\min\, \Omega_-.
\end{equation}
We will show that   $\pi(x)$ can be expressed as a linear combination with real coefficients of the \emph{generating terms} $\pi(\sL),\ldots,\pi(\sU)$, where
\begin{equation}\label{eq:om=1}
\sL=\si_{\om_-}+\om_-=\so_{\om_-},\qquad \sU=\si_{\om_-}-1.
\end{equation}
 are the \emph{lower} and \emph{upper} numbers, respectively.  Hence, as a sum of $\sU-\sL+1=-\om_-$ terms. No backward jumps of size $\om_-$ can occur from any state $x\le \sU$, and $\sL$ is the smallest output state of a jump of size $\om_-$.}  
 
\begin{example}\label{ex:LUs}
(A) Consider the reaction network, $0\ce{<=>[][]}2\tS,$ $5\tS\ce{->[]}\tS.$ In this case, $\omega_*=2$,  $\si_+=\so_-=0$, and ({\bf A3}) does not apply. In fact,  $s=0,1$ with PICs $2\N_0$ and $2\N_0+1$, respectively. After translation, we  find $\sL_0=1$, $\sU_0=2$, and $\sL_1=0$, $\sU_1=1$, where the index refers to $s=0,1$. Thus, the lower and upper numbers are not the same for each class.
(B) The reaction network, $0\ce{<=>[][]}\tS,$ $n\tS\ce{<=>[]}(n+2)\tS,$
has  $\omega_*=1$,  $\si_+=\so_-=0$. Hence, $s=0$ with PIC $\N_0$, and ({\bf A3}) applies.  We find $\sL=n,\sU=n+1$. Thus, the lower and upper numbers might be  arbitrarily large depending on the SRN.
\end{example} 

Before presenting the main results, we study an example.

\begin{example}  \label{ex:equations}
Recall Example \ref{ex:LUs}(B) with mass-action kinetics, $n=2$, and reactions 
$$0\ce{<=>[\ka_1][\ka_2]}\tS\quad 2\tS\ce{<=>[\ka_3][\ka_4]}4\tS.$$
According to \cite[Theorem 7]{HWX}, this SRN is positive recurrent on $\N_0$ for  all $\ka_1,\ldots,\ka_4>0$ and hence, it has a unique stationary distribution.
We have $\sL=2$, $\sU=3$. 

By rewriting the master equation \eqref{eq:master}, we obtain
\begin{align*}
\pi(0)& =\frac{\kappa_2}{\kappa_1}\pi(1),\quad \pi(1)=\frac{2\kappa_2}{\kappa_1}\pi(2),\\
  \pi(4)&=\sum_{i=1}^3\frac{\eta_i( 2)}{\eta_4(4)}\pi( 2)-\frac{\eta_1(1)}{\eta_4(4)} \pi(1)-\frac{\eta_2(3)}{\eta_4(4)} \pi(3),\\
 \pi(\ell)&=\sum_{i=1}^4\frac{\eta_i(\ell-2)}{\eta_4(\ell)}\pi(\ell-2)-\frac{\eta_1(\ell-3)}{\eta_4(\ell )} \pi(\ell-3)-\frac{\eta_2(\ell-1)}{\eta_4(\ell )} \pi(\ell-1)-\frac{\eta_3(\ell-4)}{\eta_4(\ell )} \pi(\ell-4),
\end{align*}
the latter for $\ell> \sU+1=4$. There is not an equation   that expresses $\pi(3)$ in terms of the lower states $\ell<3$ as the state $3$ can only be reached from $2, 4$ and $5$. Consequently,  $\pi(0)$ and $\pi(1)$ can be found from     $\pi(2)$ and $\pi(3)$, but not vice versa, and   $\pi(\ell)$, $\ell\ge\sU+1=4$ is determined recursively   from $\pi(2)$ and $\pi(3)$ using the last equations above, say, $\pi(\ell)=\gamma_2(\ell)\pi(2)+\gamma_3(\ell)\pi(3)$, where the coefficients $\gamma_2(\ell), \gamma_3(\ell)$ depend  on the intensity functions. For $\ell=0,1$, these follow from the first equations (see also Theorem \ref{th-1} below),
\begin{align*}
 \gamma_2(0)&=\frac{2\kappa_2^2}{\kappa_1^2},\quad \gamma_3(0)=0,\quad \gamma_2(1)=\frac{2\kappa_2}{\kappa_1},\quad \gamma_{3}(1)=0,
 \end{align*}
while for $\ell=2,3$, we obviously have $\gamma_j(\ell)=\delta_{j,\ell}$, where $\delta_{\ell,j}$ is the Kronecker symbol. For $\ell>3$, the coefficients are given recursively, see   Theorem \ref{th-1} for the general expression. A recursion for $\pi(\ell)$ cannot be given in terms of $\pi(0),\ldots,\pi(1)$ alone.  
\end{example}

\color{black}

First, we focus on the real coefficients of the linear combination. \red{From Example \ref{ex:equations} we learn that the coefficients take different forms depending on the state, which is reflected in the definition of $\gamma_j(\ell)$ below.} For convenience, rows and columns  of a matrix are indexed from $0$.

\red{By the definition  of $\omega_+$ and $\omega_-$, any state is reachable from at most $\om_+-\om_-$ other states in one jump. For this reason, let} 
\begin{equation}\label{eq:mstar}
m_*=\om_+-\om_--1\ge 1
\end{equation}
(as $\om_+\ge 1$, $-\om_-\ge 1$), and define the functions 
$$\gamma_j\colon\Z\to\R,\quad \sL\leq j\leq \sU,$$
 by
\begin{equation}\label{Eq-1}
\gamma_j(\ell)=\left\{\begin{array}{cl}
0 &  \quad\text{for}\quad \ell<0, \\
 G_{\ell,j-\sL}  & \quad\text{for}\quad \ell= 0,\ldots,\sL-1, \quad j < \sU, \\
0 & \quad\text{for}\quad  \ell= 0,\ldots,\sL-1,\quad j=\sU, \\
\delta_{\ell,j}& \quad\text{for}\quad \ell= \sL.\ldots,\sU, \\
\sum_{k=1}^{m_*}\gamma_j(\ell-k)f_k(\ell) & \quad\text{for}\quad \ell>\sU,
\end{array}\right.
\end{equation}
where  the functions $f_k\colon [\sU+1,\infty)\cap\N_0\to\R$ are  defined by
\begin{align}\label{eq:f}
f_k(\ell)&=-\frac{c_k(\ell-k)}{c_0(\ell)}\qquad \text{for}\qquad \ell>\sU,\quad k=0,\ldots,m_*,\\
c_k(\ell)&={\rm sgn}(\om_-+k+1/2)\sum_{\omega\in B_k}\lambda_{\om}(\ell),\quad \ell\in\Z,\quad k=0,\ldots,m_*,\label{eq:c} \\
B_k&=\{\omega\in\Omega\ |\ k'(\om-k')>0,\,\,\,\text{with}\,\,\, k'=\om_-+k+1/2\}\label{eq:Bk}
\end{align}
(note that $f_0(\ell)=-1$ is   not used in the definition of $\gamma_j(\ell)$, but appears in the proof of the Proposition \ref{pro-2} below),
and $G=-(H_1)^{-1}H_2$ is an $L\times (U-L)$ matrix defined from the $\sL\times \sU$ matrix $H=(H_1\,\,H_2)$ with entries
\begin{align}\label{eq:H}
H_{m,n}=\delta_{m,n}-\frac{\lambda_{(m-n)}(n)}{\sum_{\omega\in \Omega}\lambda_{\omega}(m)},\quad m=0,\ldots,\sL-1,\quad n=0,\ldots,\sU-1,
\end{align}
where $H_1$ is $\sL\times \sL$ dimensional and $H_2$ is $\sL\times (\sU-\sL)$ dimensional. Note that $H$ is the empty matrix if $\sL=0$ or $\sU=\sL$.
In \eqref{eq:H}, we adopt the convention stated in \eqref{eq:lambda0}. In particular, $H_{m,m}=1$ for $0\le m\le \sL-1$. By definition,   $(I_{\sL} \,\, -G)$ is the row reduced echelon form of $H$ by Gaussian elimination, where $I_\sL$ is the $\sL\times \sL$ identity matrix.

\red{The functions $c_k\colon\Z\to\R$ and the sets $B_k$ come out of the flux balance equation, that is an equivalent formulation of the master equation \cite{Ke11}. We use it in \eqref{eq:flux} in the proof of Proposition \ref{pro-2}. For each $x\in\N_0$, it provides an identity between two positive sums, one over terms evaluated in at most $\om_+$ states with values $\ell< x$, and one over terms evaluated in at most $-\om_-+1$ states with values $\ell\ge x$. }
The function $f_k(\ell)$ is  well defined for $\ell>\sU$ if ({\bf A1}), ({\bf A2}) are fulfilled. In that  case, $c_0(\ell)<0$, see Lemma \ref{lem:ck}. The matrix $H$ is well defined with invertible $H_1$ under the assumptions ({\bf A1}), ({\bf A2}), provided $\sL>0$ and $\sU>\sL$, see Lemma \ref{lem:echelon}.

Proposition \ref{pro-2} and Theorem~\ref{th-1} below do not require uniqueness of the stationary measure.  

\begin{proposition}\label{pro-2} 
A  non-zero measure $\pi$ on $\N_0$ is a stationary measure of $(\Omega,\cF)$ on $\N_0$ if and only if
\begin{equation} \label{eq:mstar2original}
\sum_{k=0}^{m_*}\pi(\ell-k)c_k(\ell-k)=0,\quad  \text{for}\quad \ell>\sU-\sL.
\end{equation}
Here, $\pi(\ell)=0$  for $\ell<0$ for convenience,   $c_k$ is defined as in \eqref{eq:c},  and $m_*$ is defined as in \eqref{eq:mstar}.
\end{proposition}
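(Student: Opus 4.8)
The plan is to rewrite the master equation \eqref{eq:master} as a family of \emph{flux balance} (cut) equations and then to recognise \eqref{eq:mstar2original} as precisely that family. For $b\in\Z$ introduce the net upward probability flux across the cut separating $\{0,\ldots,b-1\}$ from $\{b,b+1,\ldots\}$,
$$\Phi(b)=\sum_{\om\in\Omega_+}\,\sum_{y=b-\om}^{b-1}\lambda_{\om}(y)\pi(y)\;-\;\sum_{\om\in\Omega_-}\,\sum_{y=b}^{b-\om-1}\lambda_{\om}(y)\pi(y),$$
a finite sum since $\Omega$ is finite and $\pi(y)=0$ for $y<0$. A direct reindexing of these telescoping windows shows that $\Phi(x)-\Phi(x+1)$ equals the right-hand side of \eqref{eq:master} at $x$ (the up-sum contributes $\lambda_\om(x-\om)\pi(x-\om)-\lambda_\om(x)\pi(x)$ for $\om\in\Omega_+$, and the down-sum the same for $\om\in\Omega_-$). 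Hence the master equation at $x$ is equivalent to $\Phi(x)=\Phi(x+1)$, and summing over $x=b,\ldots,N$ gives $\Phi(b)=\Phi(N+1)$ for all $N\ge b$.

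For the forward implication, a stationary $\pi$ satisfies $\Phi(x)=\Phi(x+1)$ for all $x\ge0$, so $\Phi$ is constant on $\{0,1,2,\ldots\}$. To pin this constant to $0$, I would evaluate at $b=0$: the up-sum ranges over $y<0$, where $\pi(y)=0$, and the down-sum ranges over $0\le y\le -\om-1$, where $\lambda_\om(y)=0$ because a backward jump of size $\om$ cannot occur before $\si_\om\ge-\om$ (equivalently $\so_\om\ge0$, as the chain lives on $\N_0$). Thus $\Phi\equiv0$ on $b\ge0$, which is \eqref{eq:mstar2original}. Conversely, if \eqref{eq:mstar2original} holds for all the stated $\ell$, then (by the identification below) $\Phi(b)=0$ for every $b\ge1$; combined with the automatic $\Phi(0)=0$ this yields $\Phi(x)=\Phi(x+1)$ for every $x\ge0$, i.e.\ the master equation on $\N_0$.

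It remains to match $\Phi(b)=0$ with \eqref{eq:mstar2original}. Grouping $\Phi(b)$ by the value $\pi(y)$, only the $m_*+1=\om_+-\om_-$ states $y=b-\om_+,\ldots,b-\om_--1$ occur, so I set $\ell:=b-\om_--1$ and write $y=\ell-k$, $k=0,\ldots,m_*$; the range $b\ge1$ is exactly $\ell>-\om_--1=\sU-\sL$. For fixed $k$, the jumps $\om$ contributing to the coefficient of $\pi(\ell-k)$ are those crossing the cut from $\ell-k$: the up-condition $\ell-k<b\le\ell-k+\om$ forces $k\ge-\om_-$ and $\om\ge\om_-+k+1$, while the down-condition $\ell-k+\om<b\le\ell-k$ forces $k\le-\om_--1$ and $\om\le\om_-+k$. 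Writing $k'=\om_-+k+\tfrac12$, these two alternatives are exactly $\{\om:\om>k'\}$ when $k'>0$ and $\{\om:\om<k'\}$ when $k'<0$, i.e.\ the set $B_k$ of \eqref{eq:Bk}, with accompanying sign $\text{sgn}(k')$. Hence the coefficient is precisely $c_k(\ell-k)$ from \eqref{eq:c}, and $\Phi(b)=\sum_{k=0}^{m_*}c_k(\ell-k)\pi(\ell-k)$.

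The main obstacle is purely bookkeeping: carrying out the telescoping reindexing correctly for the two signs of $\om$, and verifying that the crossing conditions collapse exactly onto the half-integer description $B_k=\{\om:k'(\om-k')>0\}$ with sign $\text{sgn}(k')$. The device $k'=\om_-+k+\tfrac12$ is what lets one formula cover both the ``jump up across the cut'' and ``jump down across the cut'' regimes while excluding the degenerate possibility of a jump landing on the cut. As a sanity check, for a birth-death chain ($\Omega=\{\pm1\}$, $m_*=1$, $\sU-\sL=0$) the identity reduces to $\lambda_{1}(\ell-1)\pi(\ell-1)=\lambda_{-1}(\ell)\pi(\ell)$ for $\ell\ge1$, the classical detailed-balance recursion.
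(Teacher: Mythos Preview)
Your proof is correct and follows essentially the same route as the paper's: both establish the flux balance (cut) identity and then perform the substitution $\ell=b-\om_--1$, $y=\ell-k$ to match the $c_k$, $B_k$ notation. The only difference is cosmetic---you derive $\Phi(x)-\Phi(x+1)=\text{(master eq.\ at $x$)}$ and $\Phi(0)=0$ from scratch, whereas the paper cites the flux balance identity \eqref{eq:flux} from \cite[Theorem~3.3]{XHW20c} and then carries out the same reindexing via $A_{k+\om_-+1}=B_k$.
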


\begin{proof}
We recall an identity related to the flux balance equation \cite{Ke11} for stationary measures for a CTMC on the non-negative integers \cite[Theorem~3.3]{XHW20c}, which is a  consequence of the master equation \eqref{eq:master}: A non-negative measure (probability measure)
$\pi$ on $\N_0$ is a stationary measure (stationary distribution) of $(\Omega,\cF)$ if and only if
\begin{equation}\label{eq:flux}
-\sum_{j=\omega_-+1}^0{\pi(x-j)}\sum_{\omega\in A_j}\lambda_{\omega}(x-j)+\sum_{j=1}^{\omega_+}\pi(x-j)\sum_{\omega\in A_j}\lambda_{\omega}(x-j)=0,\quad x\in\Z,
\end{equation}
where it is used that $\#\Omega<\infty$, and  the domain of $\pi$ as well as $\lambda_{\omega}$ is extended from $\N_0$ to $\Z$ (that is, $\pi(x)=0$, $\lambda_{\omega}(x)=0$ for $x\in\Z\setminus\N_0$). Furthermore,  the sets $A_j$ are defined by
$$A_j=\begin{cases}
\{\omega\in\cT\colon j>\omega\}\quad \text{if}\ j\in\{\omega_-,\ldots,0\},\\
\{\omega\in\cT\colon j\le\omega\}\quad \text{if}\ j\in\{1,\ldots,\omega_++1\}.
\end{cases}$$
If $x\le 0$ then all terms in both double sums in \eqref{eq:flux} are zero. In fact, \eqref{eq:flux} for $x$ is equivalent to the master equation \eqref{eq:master} for $x-1$.

Assume $\pi$ is a stationary measure of $(\Omega,\cF)$. Let $x=\ell+(\omega_-+1)$,  $\ell\in\Z$, and  let  $j=k+(\omega_-+1)$ with $j\in\{\omega_-+1,\ldots,\omega_+\}$. Then,   $x-j=\ell-k$, and it follows that \eqref{eq:flux} is equivalent to
\begin{equation}\label{eq:flux2}
\sum_{k=0}^{m_*}\textrm{sgn}(\omega_-+k+1/2) \pi(\ell-k)\sum_{\omega\in A_{k+\omega_-+1}}\lambda_{\omega}(\ell-k)=0,
\end{equation}
where $\textrm{sgn}(\omega_-+k+1/2)=1$ if $\omega_-+k\ge0$, and $-1$ otherwise.
It implies  that
\[(k+\omega_-+1)>\omega\quad\Leftrightarrow\quad(k+\omega_-+1/2)>\omega,\]
\[{(k+\omega_-+1)\le \omega}\quad\Leftrightarrow\quad(k+\omega_-+1/2)<\omega.\]
Hence, it also follows from the definition \eqref{eq:Bk} of $B_k$ that
\[A_{k+\omega_-+1}=B_k,\quad k=0,\ldots,m_*.\]
Thus, \eqref{eq:flux2} is equivalent to
\begin{equation}\label{eq:mstar}
\sum_{k=0}^{m_*}\pi(\ell-k)c_k(\ell-k)=0,\qquad  \ell\in\N_0,
\end{equation}
using the definition of $c_k(\ell)$. Since $\ell=x-(\om_-+1)=x+\sU-\sL$ and \eqref{eq:flux} is trivial for $x\le 0$, then \eqref{eq:mstar} is also trivial for $\ell=0,\ldots,\sU-\sL$ (all terms are zero).
This proves the bi-implication and the proof is completed.
\end{proof}

For $0\le\ell\le\sU-\sL$, all $m_*$ terms in \eqref{eq:mstar2original} are zero, hence the identity is a triviality.
We express  $\pi$ in terms of the generating terms,  $\pi(\sL),\dots,\pi(\sU)$. 

\begin{theorem}\label{th-1}
A  non-zero measure $\pi$ on $\N_0$ is a stationary measure of $(\Omega,\cF)$ on $\N_0$ if and only if   
\begin{equation}\label{Eq-3}
\pi(\ell)=\sum_{j=\sL}^{\sU}\pi(j)\gamma_j(\ell), \quad\text{for}\quad \ell\in\N_0, 
\end{equation}
where $\gamma_j$ is defined as in \eqref{Eq-1}.
\end{theorem}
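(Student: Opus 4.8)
The plan is to combine the characterization of Proposition \ref{pro-2} with the definition \eqref{Eq-1} of the coefficients $\gamma_j$, so that the theorem reduces to the linear-algebraic equivalence \eqref{eq:mstar2original} $\iff$ \eqref{Eq-3}. I would first observe that the range $\ell>\sU-\sL$ in \eqref{eq:mstar2original} splits into two regimes governed by the leading coefficient $c_0(\ell)$. From \eqref{eq:c} and \eqref{eq:Bk} one computes $B_0=\{\om_-\}$, hence $c_0(\ell)=-\lambda_{\om_-}(\ell)$, which vanishes exactly for $\ell\le\sU=\si_{\om_-}-1$ and is strictly negative for $\ell>\sU$ (this is Lemma \ref{lem:ck}). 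Thus for $\ell>\sU$ the equation can be solved for $\pi(\ell)$, producing precisely the forward recursion $\pi(\ell)=\sum_{k=1}^{m_*}\pi(\ell-k)f_k(\ell)$ that defines $\gamma_j$ on $\ell>\sU$, while for $\sU-\sL<\ell\le\sU$ the leading term drops out and the remaining $\sL$ equations form the boundary system.

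Next I would identify this boundary system with $H\vec\pi=0$, where $\vec\pi=(\pi(0),\ldots,\pi(\sU-1))^\top$. Using the correspondence established inside the proof of Proposition \ref{pro-2} — equation \eqref{eq:mstar2original} at $\ell=m+1+\sU-\sL$ is equivalent to the master equation \eqref{eq:master} at state $m$ — the boundary range $\sU-\sL<\ell\le\sU$ corresponds to $m=0,\ldots,\sL-1$. Dividing the master equation at $m$ by $\sum_{\omega}\lambda_\omega(m)>0$ (positive since $\N_0$ is a PIC under ({\bf A3})) and substituting $n=m-\omega$ yields exactly $\sum_n H_{m,n}\pi(n)=0$ with $H$ as in \eqref{eq:H}. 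Here one checks that state $\sU$ never enters these equations for $m\le\sL-1$: the only way to produce $n=\sU$ forces $\omega=m-\sU\le\om_-$, hence $\omega=\om_-$ and $m=\sL-1$, but then $\lambda_{\om_-}(\sU)=0$ as $\sU<\si_{\om_-}$; so the column index stays in $\{0,\ldots,\sU-1\}$, matching the shape of $H$.

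With this dictionary both directions follow. For the forward implication (stationarity $\Rightarrow$ \eqref{Eq-3}): the generating range $\sL\le\ell\le\sU$ is immediate from $\gamma_j(\ell)=\delta_{\ell,j}$; the low states are recovered by solving $H\vec\pi=0$ as $(\pi(0),\ldots,\pi(\sL-1))^\top=-H_1^{-1}H_2(\pi(\sL),\ldots,\pi(\sU-1))^\top=G(\pi(\sL),\ldots,\pi(\sU-1))^\top$, which reads $\pi(m)=\sum_{j=\sL}^{\sU}\gamma_j(m)\pi(j)$ (using $\gamma_\sU(m)=0$ for $m<\sL$, and invertibility of $H_1$ from Lemma \ref{lem:echelon}); and the high states $\ell>\sU$ follow by induction on $\ell$, substituting the already-established formula for each $\pi(\ell-k)$ into the forward recursion and using linearity, noting that $\ell-m_*\le\sL$ keeps the recursion within determined states. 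For the converse (\eqref{Eq-3} $\Rightarrow$ stationarity): for $\ell>\sU$ each $\gamma_j$ satisfies $\sum_{k=0}^{m_*}c_k(\ell-k)\gamma_j(\ell-k)=0$ by its defining recursion, so $\pi=\sum_j\pi(j)\gamma_j$ satisfies \eqref{eq:mstar2original} there by linearity; for the boundary range, substituting $\pi(m)=\sum_j\gamma_j(m)\pi(j)$ into $H\vec\pi$ gives $(H_1G+H_2)(\pi(\sL),\ldots,\pi(\sU-1))^\top=0$ since $H_1G+H_2=0$. The degenerate cases $\sL=0$ (no boundary equations, $H$ empty) and $\sU=\sL$ are handled trivially.

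I expect the main obstacle to be the careful index bookkeeping needed to glue the three pieces together: precisely matching the boundary equations $\sU-\sL<\ell\le\sU$ with the low master equations $m=0,\ldots,\sL-1$, and verifying that these coincide with the $\sL\times\sU$ system $H\vec\pi=0$ rather than a shifted or larger system. In particular, confirming that state $\sU$ drops out of the low master equations and that the forward recursion's reach $\ell-m_*$ never exceeds already-determined states relies on using the identities $\sU-\sL=-\om_--1$, $\si_{\om_-}=\sU+1$, and $m_*=\om_+-\om_--1$ consistently throughout.
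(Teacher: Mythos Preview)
Your proposal is correct and follows essentially the same approach as the paper: split the equations of Proposition~\ref{pro-2} into the forward-recursion regime $\ell>\sU$ (where $c_0(\ell)\neq0$) and the boundary regime $\sU-\sL<\ell\le\sU$, identify the latter with the system $H\vec\pi=0$ via the master equation at states $0,\ldots,\sL-1$, and run induction for both directions. The paper organizes the argument slightly differently (it treats the low states directly through the master equation rather than routing back through the correspondence in the proof of Proposition~\ref{pro-2}, and it states the converse for the boundary block as an ``if and only if'' of the forward calculation rather than writing out $H_1G+H_2=0$ explicitly), but the substance is the same.
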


\begin{proof}
Assume $\pi$ is a stationary measure. The proof is by induction. We first prove the induction step.
Assume \eqref{Eq-3} holds for $\ell-1\ge \sU$ and all $\ell'$ such that $\ell-1\ge \ell'\ge0$. Then, from  Proposition~\ref{pro-2},  \eqref{Eq-1}, \eqref{eq:f}, \eqref{eq:c}, and the induction hypothesis, we have
$$\pi(\ell)=-\sum_{k=1}^{m_*}\pi(\ell-k)\frac{c_k(\ell-k)}{c_0(\ell)} =-\sum_{k=1}^{m_*}\sum_{j=\sL}^{\sU}\pi(j)\gamma_j(\ell-k)\frac{c_k(\ell-k)}{c_0(\ell)}=\sum_{j=\sL}^{\sU}\pi(j)\gamma_j(\ell).$$

We next turn to the induction basis.
For $\ell=\sL,\ldots,\sU$,  \eqref{Eq-3} holds trivially as $\gamma_j(\ell)=\delta_{\ell,j}$ in this case.
It remains to prove  \eqref{Eq-3} for $\ell=0,\ldots, \sL-1$. Since $\pi$ is a stationary measure it fulfils the master equation \eqref{eq:master} for  $(\Omega,\cF)$. By rewriting this, we obtain
 \[\pi(\ell)=\frac{\sum_{\om\in\cT}\lambda_{\om}(\ell-\om)\pi(\ell-{\om})}{\sum_{\om\in\cT}\lambda_{\om}(\ell)},\qquad \ell\in\N_0.\]
The denominator is never zero because for $\ell\ge 0$, it holds that  $\lambda_\om(\ell)>0$ for at least one $\om\in\Omega$ (otherwise zero is a trapping state).

In particular, for  $\ell=0,\ldots,\sL-1$, using \eqref{eq:om=1}, it holds that  $\ell-\om_-<\si_{\om_-}$. Hence, $\lambda_{\om_-}(\ell-\om_-)=0$, and
\[\pi(\ell)=\frac{\sum_{\om\in\cT\setminus\{\om_-\}}\lambda_{\om}(\ell-\om)\pi(\ell-\om)}{\sum_{\om\in\cT}
\lambda_{\om}(\ell)},\qquad \ell=0,\ldots,\sL-1.\]
Now, defining $n=\ell-\om$, we have  $n< \sL-1-\om_-=\sU$ for $\om\in\cT\setminus\{\om_-\}$, using    $\sU-\sL=-(\om_-+1)$, see  \eqref{eq:om=1}. Hence
$$\pi(\ell)=\frac{\sum_{n=0}^{\sU-1}\lambda_{\ell-n}(n)\pi(n)}{\sum_{\om\in\cT}
\lambda_{\om}(\ell)},\qquad \ell=0,\ldots,\sL-1,$$
with the convention in \eqref{eq:lambda0}.
Evoking the definition of $H$ in \eqref{eq:H}, this equation may be written as
\[H\begin{pmatrix}
  \pi(0)\\
  \vdots\\
  \pi(\sU-1)
\end{pmatrix}=0.
\]
Recall that  $G=-(H_1)^{-1}H_2$ with $H=(H_1\,\,H_2)$. Noting that $\gamma_{\sU}(\ell)=0$, $\ell=0,\ldots,\sL-1$, yields that \eqref{Eq-3} is fulfilled with $\gamma_j(\ell)=G_{\ell,j-\sL}$, $\ell=0,\ldots,\sL-1$ and $j=\sL,\ldots,\sU-1$, and the proof of the first part is concluded.

For the reverse part, we note that for $0\le x-1\le \sL-1$, the argument above is `if and only if': $\pi(x-1)=\sum_{j=\sL}^\sU \pi(j)\gamma_j(x-1)$ if and only if the master equation \eqref{eq:master} is fulfilled for $x-1$. As noted in the proof of Proposition \ref{pro-2}, this is equivalent to \eqref{eq:flux} being fulfilled for $x$, which in turn is equivalent to \eqref{eq:mstar} being fulfilled for $\ell=x+\sU-\sL$ (the equation is replicated here),
\begin{equation} \label{eq:mstar2}
\sum_{k=0}^{m_*}\pi(\ell-k)c_k(\ell-k)=0.
\end{equation}
As $0\le x-1\le \sL-1$, then \eqref{eq:mstar2} holds for $\ell=\sU-\sL+1,\ldots,\sU$.

For $\ell>U$  we have, using  \eqref{Eq-1} and \eqref{eq:f},
\begin{align*}
\sum_{k=0}^{m_*}\pi(\ell-k)c_k(\ell-k)&=\sum_{k=0}^{m_*}\sum_{j=\sL}^{\sU}\pi(j)\gamma_j(\ell-k)c_k(\ell-k)=\sum_{j=\sL}^{\sU}\pi(j) \sum_{k=0}^{m_*}\gamma_j(\ell-k)c_k(\ell-k)\\
&=\sum_{j=\sL}^{\sU} \pi(j)\gamma_j(\ell)c_0(\ell)+\sum_{j=\sL}^{\sU}\pi(j) \sum_{k=1}^{m_*}\gamma_j(\ell-k)c_k(\ell-k)\\
&=0,
\end{align*}
hence \eqref{eq:mstar2} holds for all $\ell>\sU-\sL$. According to Proposition \ref{pro-2}, $\pi$ is then a stationary measure of $(\Omega,\mathcal{F})$.
\end{proof}


For completeness, we apply Proposition \ref{pro-2} to Example \ref{ex:equations}. The SRN has $\om_-=-2$ and $\om_+=2$, such that $m_*=\om_++-\om_--1=3$.   Equation   \eqref{eq:mstar2}  for $1=\sU-\sL< \ell \le\sU=3$  becomes
$$-\ka_2 \pi(1)+\ka_1\pi(0)=0,\quad -2\ka_2 \pi(2)+\ka_1\pi(1)=0,$$
in agreement with the equations found in Example \ref{ex:equations}.

\color{black}


\subsection{Matrix representation}

A simple representation can be obtained in terms of determinants of a certain  matrix. For  an infinite matrix $B$, let  $B(\ell)$, $\ell\ge 1$, denote the   matrix  composed of $B$'s first $\ell$ rows and columns.

\begin{theorem} \label{th-3}
The function $\gamma_j\colon\Z\to\R$, $\sL\le j\le \sU$, has the following determinant representation for $\ell>0$,
\begin{align*}
\gamma_j(\sU+\ell)=&\det B_j(\ell) 
\end{align*}
where the matrix $B_j$ is  the infinite band matrix,
{\small
\begin{align*}
B_{j}=\begin{pmatrix}
g_j(1) & -1 & 0 & 0 & \ldots &0&\ldots\\
g_j(2) & f_1(\sU+2) & -1 & 0 & \ldots&0&\ldots\\
\vdots & \vdots & \vdots & \vdots &\ddots&\vdots& \\
g_j(m_*) & f_{m_*-1}(\sU+m_*) & f_{m_*-2}(\sU+m_*) & f_{m_*-3}(\sU+m_*) &\ldots& -1&\ldots\\
0 & f_{m_*}(\sU+m_*+1) & f_{m_*-1}(\sU+m_*+1) & f_{m_*-2}(\sU+m_*+1)& \ldots&f_1(\sU+m_*+1)&\ldots\\
0 & 0 &  f_{m_*}(\sU+m_*+2) & f_{m_*-1}(\sU+m_*+2) &\ldots& f_2(\sU+m_*+2)&\ldots\\
0 & 0 & 0 & f_{m_*}(\sU+m_*+3) & \ldots&f_3(\sU+m_*+3)&\ldots\\
\vdots & \vdots & \vdots & \vdots & &\vdots &\ddots
\end{pmatrix},
\end{align*}}
 and
\begin{align*}
g_j(k)=\sum_{i=0}^{m_*-k}f_{m_*-i}(\sU+k)\gamma_j(\sU+i+k-m_*),\qquad k=1,\ldots,m_*.
\end{align*}
\end{theorem}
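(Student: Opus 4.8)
The plan is to set $D_\ell:=\det B_j(\ell)$ for $\ell\ge 1$, together with the empty-determinant convention $D_0:=1$, and to prove $\gamma_j(\sU+\ell)=D_\ell$ by strong induction on $\ell$. The strategy is to show that the sequence $(D_\ell)_{\ell\ge 0}$ satisfies exactly the recursion and boundary data that define $\gamma_j(\sU+\ell)$ in \eqref{Eq-1}. The engine of the whole argument is a single cofactor expansion of $B_j$ along its last row, which I carry out first as a standalone determinant identity.

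The key step exploits that $B_j$ is of lower Hessenberg form (all entries strictly above the superdiagonal vanish) with every superdiagonal entry equal to $-1$. Deleting the last row and the $k$th column of $B_j(\ell)$ leaves a block upper-triangular matrix: the leading $(k-1)\times(k-1)$ block contributes $D_{k-1}$; the upper-right block vanishes by the Hessenberg property; and the lower-right block (rows $k,\dots,\ell-1$, columns $k+1,\dots,\ell$) is triangular with the superdiagonal $-1$'s on its diagonal, contributing $(-1)^{\ell-k}$. Hence the $(\ell,k)$-minor equals $(-1)^{\ell-k}D_{k-1}$, and the cofactor sign $(-1)^{\ell+k}$ combines with it so that all signs cancel, giving
\begin{equation*}
D_\ell=\sum_{k=1}^{\ell}(B_j)_{\ell,k}\,D_{k-1}.
\end{equation*}
Everything after this is bookkeeping of the last-row entries $(B_j)_{\ell,k}$.

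Reading off row $\ell$ of $B_j$, the band entries are $(B_j)_{\ell,k}=f_{\ell-k+1}(\sU+\ell)$ for $\ell-m_*+1\le k\le\ell$, and the first column carries the additional term $g_j(\ell)$ precisely when $\ell\le m_*$. For $\ell>m_*$ the first-column entry is $0$, so the substitution $k'=\ell-k+1$ yields $D_\ell=\sum_{k'=1}^{m_*}f_{k'}(\sU+\ell)D_{\ell-k'}$; since every argument $\sU+\ell-k'$ exceeds $\sU$, this is exactly the recursion \eqref{Eq-1} for $\gamma_j(\sU+\ell)$, and the induction hypothesis $D_{\ell-k'}=\gamma_j(\sU+\ell-k')$ closes the step. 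For $1\le\ell\le m_*$ the same expansion gives $D_\ell=g_j(\ell)+\sum_{k'=1}^{\ell-1}f_{k'}(\sU+\ell)D_{\ell-k'}$, whereas splitting the $\gamma_j$-recursion in \eqref{Eq-1} at $k=\ell$ (the index where its arguments drop to $\sU$) writes $\gamma_j(\sU+\ell)=\sum_{k=1}^{\ell-1}f_k(\sU+\ell)\gamma_j(\sU+\ell-k)+\sum_{k=\ell}^{m_*}f_k(\sU+\ell)\gamma_j(\sU+\ell-k)$. The first sums agree by induction, and the reindexing $i=m_*-k$ turns the tail into $\sum_{i=0}^{m_*-\ell}f_{m_*-i}(\sU+\ell)\gamma_j(\sU+i+\ell-m_*)$, which is exactly the stated definition of $g_j(\ell)$. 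The base case $\ell=1$ reads $D_1=(B_j)_{1,1}=g_j(1)=\gamma_j(\sU+1)$, so the induction is complete.

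The one delicate point is this boundary bookkeeping: one must verify that $g_j(\ell)$ absorbs precisely those terms of the $\gamma_j$-recursion whose arguments $\sU+\ell-k$ do not exceed $\sU$, i.e.\ land in the region where $\gamma_j$ is given directly by \eqref{Eq-1} (the value $0$, a $G$-entry, or $\delta_{\ell,j}$), with no overlap or omission at the transition between $\ell=m_*$ and $\ell=m_*+1$. The index substitution $i=m_*-k$ confirms this matching exactly; terms whose argument $\sU+i+\ell-m_*$ is negative simply vanish by the top line of \eqref{Eq-1}, consistent with the first column of $B_j$ being truncated to $0$ below row $m_*$. Everything else reduces to the routine Hessenberg expansion established in the key step.
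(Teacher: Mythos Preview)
Your proof is correct and takes a genuinely different route from the paper. The paper invokes \cite[Theorem~2]{K93}, a general determinantal representation for solutions of $m_*$-th order linear difference equations with variable coefficients, and then spends its effort translating the recursion \eqref{Eq-1} into the notation of that reference; the identity $\gamma_j(\sU+\ell)=\det B_j(\ell)$ then drops out after noting $f_0\equiv -1$. You instead prove the determinant identity from scratch: the Hessenberg structure of $B_j(\ell)$ makes the last-row cofactor expansion collapse to $D_\ell=\sum_{k=1}^\ell (B_j)_{\ell,k}D_{k-1}$, and you match this against the recursion for $\gamma_j$, absorbing the ``initial segment'' $k\ge\ell$ into $g_j(\ell)$. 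Your argument is self-contained and arguably more transparent, since it makes explicit why the particular first-column entries $g_j(k)$ are exactly what is needed; the paper's approach, on the other hand, situates the result within a known general framework and would extend immediately to variants covered by \cite{K93}.
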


\begin{proof}
For the first equality, we apply  \cite[Theorem 2]{K93} on the solution to a $m_*$-th order linear difference equation with variable coefficients. First, from \eqref{Eq-1}, we have for $\ell>\sU$ that
\begin{align*}
\gamma_j(\ell)=\sum_{k=1}^{m_*}\gamma_j(\ell-k)f_k(\ell),\qquad\text{that is,}\qquad  \sum_{k=0}^{m_*}\gamma_j(\ell-k)f_k(\ell)=0,
\end{align*}
as $f_0(\ell)=-1$. Thus, for $\ell>\sU-m_*$, we have
\begin{align}\label{eq:used}
\sum_{k=0}^{m_*}\gamma_j(\ell+k)f_{m_*-k}(\ell+m_*)=0,
\end{align}
subject to the initial conditions $\gamma_j(\ell)$ for $\ell=\sU+1-m_*,\ldots,\sU$, given in \eqref{Eq-1}.

Secondly, we translate the notation used here into the notation of \cite[Theorem 2]{K93}: $n=m_*$, $i=k$, $k=\ell-(\sU-m_*)$. Furthermore, we take $g(k)=0$, $q(a,b)=f_{m_*-(b-a)}(\sU+a)$ and $y_{i+k}=\gamma_j(i+k+\sU-m_*)$.  Then,
the condition
$\sum_{i=0}^mq(k,i+k)y_{i+k}=g(k)$ of \cite[Theorem 2]{K93} translate into \eqref{eq:used}. The matrix $B_j$ is as in \cite[Theorem 2]{K93}, except the first column is multiplied by $-1$. By \cite[Theorem 2]{K93}, we find that
\begin{align*}
\gamma_j(\sU+\ell)=\frac{(-1)^{\ell-1}}{\prod_{i=1}^\ell f_0(\sU+i)}(-\det B_j(\ell))=\det B_j(\ell),\quad \text{for} \quad \ell\in\N.
\end{align*}
By Lemma \ref{lem:ck}, the second equality follows from $f_0(\ell)=-1$, $\ell>\sU$, see \eqref{eq:f}.
\end{proof}

A combinatorial representation can be achieved by calculating the determinant. In the present case, the matrix $B_j(\ell)$ is a Hessenberg matrix and an explicit combinatorial expression exists \cite[Proposition~2.4]{marrero}.
Combining Theorem \ref{th-1} and Theorem \ref{th-3} yields the following.

 \begin{corollary}\label{cor:B}
Let $\pi$ be a stationary measure of  $(\Omega,\cF)$  on $\N_0$. Then,
\begin{align*}
\pi(\ell)=\sum_{j=\sL}^{\sU}\pi(j)\det B_j(\ell-\sU), \qquad \ell>\sU.
\end{align*}
\end{corollary}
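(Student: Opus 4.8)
The plan is to obtain the statement by directly composing Theorem~\ref{th-1} with Theorem~\ref{th-3}, so that essentially no new work is required beyond a substitution and a check of the index ranges. Theorem~\ref{th-1} asserts that, for a stationary measure $\pi$ on $\N_0$, one has $\pi(\ell)=\sum_{j=\sL}^{\sU}\pi(j)\gamma_j(\ell)$ for every $\ell\in\N_0$, while Theorem~\ref{th-3} evaluates the coefficient functions as $\gamma_j(\sU+\ell)=\det B_j(\ell)$ for every $\ell\ge 1$. The idea is simply to feed the second identity into the first on the relevant range of states.

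Concretely, I would fix $\ell>\sU$ and write $\ell=\sU+(\ell-\sU)$ with $\ell-\sU\ge 1$, so that Theorem~\ref{th-3} is applicable and yields $\gamma_j(\ell)=\gamma_j\bigl(\sU+(\ell-\sU)\bigr)=\det B_j(\ell-\sU)$ for each $j\in\{\sL,\ldots,\sU\}$. Substituting these values into the representation of Theorem~\ref{th-1} immediately produces
\[
\pi(\ell)=\sum_{j=\sL}^{\sU}\pi(j)\gamma_j(\ell)=\sum_{j=\sL}^{\sU}\pi(j)\det B_j(\ell-\sU),
\]
which is the claimed formula.

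The only point deserving a word of care, and the reason the corollary is stated for $\ell>\sU$ rather than for all $\ell\in\N_0$, is the domain of validity of the determinant representation: Theorem~\ref{th-3} describes $\gamma_j$ only at arguments $\sU+\ell$ with $\ell\ge 1$. For $\ell\le\sU$ the coefficients $\gamma_j(\ell)$ are instead governed by the Kronecker-delta and Gaussian-elimination cases of the piecewise definition \eqref{Eq-1}, so those terms cannot be folded into a single determinant $\det B_j(\cdot)$ and are left outside the stated range. I do not expect any genuine obstacle here; the mathematical content lies entirely in the two theorems being combined, and this corollary merely records their juxtaposition in a convenient closed form.
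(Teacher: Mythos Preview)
Your proposal is correct and follows exactly the paper's approach: the paper simply states that the corollary is obtained by combining Theorem~\ref{th-1} and Theorem~\ref{th-3}, which is precisely the substitution you carry out. Your additional remark about why the range is restricted to $\ell>\sU$ is accurate and a helpful clarification.
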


\begin{example}[Upwardly skip-free process]
 Consider  the SRN  with  mass-action kinetics,
$$0\ce{->[\ka_1]}\tS,\quad \tS\ce{->[\ka_2]}2\tS,\quad 2\tS\ce{->[\ka_3]}0.$$
Here, $\cT=\{-2,1\}$ and $\lambda_{-2}(x)=\ka_3x(x-1)$, $\lambda_1(x)=\ka_1+\ka_2x$ for $x\in\N_0$. Hence, $\om_*=1$, $\si_+=0$, $\so_-=0$. Consequently, $s=0$, $\sP_0=\N_0$ and ({\bf A3}) is fulfilled, in addition to ({\bf A1})-({\bf A2}). According to \cite[Theorem 7]{HWX}, the SRN is positive recurrent on $\N_0$ for all $\ka_1,\ka_2,\ka_3>0$, and hence has a unique stationary distribution.
 
We have $\sL=0$ and $\sU=1$.
By Theorem~\ref{th-3}, the stationary distribution fulfils 
\begin{align*}
\pi(x)=\pi(0) \det B_{0}(x-1)+\pi(1) \det B_{1}(x-1), \qquad x\geq2,
\end{align*}
where $B_0=( A_0\,\, B )$, $B_1=( A_1\,\, B )$, and
\[A_0=\begin{pmatrix}
\frac{\ka_1}{2\ka_3} \\
0 \\
0 \\
0 \\
0 \\
 \vdots \end{pmatrix},\quad
 A_1=\begin{pmatrix}
0 \\
\frac{\ka_1+\ka_2}{2\cdot3\ka_3} \\
0 \\
0 \\
0 \\
 \vdots \end{pmatrix},
 \quad B=\begin{pmatrix}
 -1 & 0 & 0 & 0&\ldots\\
 \frac{1}{3} & -1 & 0 & 0&\ldots\\
 \frac{\ka_1+2\ka_2}{3\cdot4\ka_3} & \frac{2}{4} & -1 & 0 &\ldots \\
 0 & \frac{\ka_1+3\ka_2}{4\cdot5\ka_3} & \frac{3}{5} & -1&\ldots\\
 0 & 0 & \frac{\ka_1+4\ka_2}{5\cdot6\ka_3} & \frac{4}{6}&\ldots\\
  \vdots &  \vdots&  \vdots& \vdots& \ddots
\end{pmatrix}.\]
\end{example}

\medskip
\begin{example}[Downwardly skip-free process]
Consider  the SRN with   mass-action kinetics,
$$0\ce{->[\ka_1]}2\tS,\quad \tS\ce{->[\ka_2]}0,\quad 2\tS\ce{->[\ka_3]}\tS.$$
Here,  $\cT=\{-1,2\}$ and $\lambda_{-1}(x)=\ka_2x+\ka_3x(x-1)$, $\lambda_2(x)=\ka_1$ for $x\in\N_0$. Hence, $\om_*=1$, $\si_+=0$, $\so_-=0$. Consequently, $s=0$, $\sP_0=\N_0$ and ({\bf A3}) is fulfilled, in addition to ({\bf A1})-({\bf A2}). According to \cite[Theorem 7]{HWX}, the SRN is positive recurrent on $\N_0$ for all $\ka_1,\ka_2,\ka_3>0$, and hence has a unique stationary distribution.

We have $\sL=\sU=0$.
By Theorem~\ref{th-3}, the stationary distribution fulfils 
\begin{align*}
\pi(x)=\pi(0)\det B_0(x), \qquad x\ge 1,
\end{align*}
where
\[
B_0=\begin{pmatrix}
\frac{\kappa_1}{\ka_2} & -1 & 0 & 0&\ldots\\
\frac{\kappa_1}{2(\ka_2+\ka_3)} & \frac{\kappa_1}{2(\ka_2+\ka_3)} & -1 & 0&\ldots\\
0 & \frac{\kappa_1}{3(\ka_2+2\ka_3)} & \frac{\kappa_1}{3(\ka_2+2\ka_3)} &-1&\ldots\\
0 & 0 & \frac{\kappa_1}{4(\ka_2+3\ka_3)} & \frac{\kappa_1}{4(\ka_2+3\ka_3)} &\ldots\\
\vdots & \vdots& \vdots& \vdots & \ddots
\end{pmatrix}.
\]
\end{example}

\section{Skip-free Markov chains}

  \subsection{Downwardly skip-free Markov chains}

An explicit iterative formula can be derived from Theorem \ref{th-1} for  downwardly skip-free Markov chains.

\begin{corollary}\label{cor-skipfree}
Assume $\omega_-=-1$, and let $\pi(0)>0$. Then, $\pi$ is a stationary measure of   $(\Omega,\cF)$ on $\N_0$ if and only if
\begin{equation*}
\pi(\ell)=\pi(0)\gamma_0(\ell),\quad \text{for}\quad\ell\in\N_0,
\end{equation*}
 where
\begin{equation*}
\gamma_0(0)=1,\qquad \gamma_0(\ell)=\sum_{k=1}^{\om_+}\gamma_0(\ell-k)f_k(\ell),\quad \ell>0,
\end{equation*}
and  $f_k$ is as defined in \eqref{eq:f}. Consequently, there exists  a unique stationary measure of  $(\Omega,\cF)$ on $\N_0$.  Furthermore, if $\pi$ is a stationary distribution, then
$\pi(0)=\lt(\sum_{\ell=0}^{\infty}\gamma_0(\ell)\rt)^{-1}$.  
\end{corollary}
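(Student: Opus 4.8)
The plan is to read the corollary off Theorem~\ref{th-1} by checking that the downwardly skip-free assumption collapses the list of generating terms to the single term $\pi(0)$, and then to establish existence, uniqueness and the normalisation separately.

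First I would determine $\sL$ and $\sU$. Since $\om_- = \min\Omega_- = -1$ and every element of $\Omega_-$ is a negative integer no smaller than $\om_-$, the set $\Omega_-$ equals $\{-1\}$; in particular $\so_- = \so_{\om_-}$. By \eqref{eq:om=1}, $\sL = \si_{\om_-} + \om_- = \so_{\om_-}$ and $\sU = \si_{\om_-} - 1 = \so_{\om_-}$, so $\sL = \sU = \so_{\om_-}$ already. Assumption ({\bf A3}) makes $\N_0 = \sP_0$ the unique PIC with smallest element $0$, so the indexation in Proposition~\ref{pro-1old} gives $\max\{\si_+,\so_-\} = 0$ and hence $\so_- = 0$. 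Therefore $\sL = \sU = 0$, there is a single generating term $\pi(0)$, and $m_* = \om_+ - \om_- - 1 = \om_+$. Feeding $\sL = \sU = 0$ into \eqref{Eq-1}, the two middle cases disappear ($H$ is the empty matrix), $\gamma_0(0) = \delta_{0,0} = 1$, and for $\ell > 0$ the recursion reduces to $\gamma_0(\ell) = \sum_{k=1}^{\om_+}\gamma_0(\ell-k)f_k(\ell)$. Thus the formula $\pi(\ell) = \sum_{j=\sL}^{\sU}\pi(j)\gamma_j(\ell) = \pi(0)\gamma_0(\ell)$ of Theorem~\ref{th-1} is exactly the asserted equivalence.

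For uniqueness I would argue directly from $\pi(\ell) = \pi(0)\gamma_0(\ell)$: if $\pi(0) = 0$ the whole measure vanishes, contradicting that a stationary measure is non-zero, so $\pi(0) > 0$ and any two stationary measures differ only by the ratio of their values at $0$. For existence it suffices to show $\gamma_0$ is itself a non-negative non-zero measure; non-triviality is clear since $\gamma_0(0) = 1$. Non-negativity I would prove by induction along the recursion, using the sign structure of the $f_k$: for $\om_- = -1$ and $k \ge 1$ one has ${\rm sgn}(\om_- + k + 1/2) = +1$, so $B_k = \{\om\in\Omega\colon \om\ge k\}$ consists only of forward jumps and $c_k \ge 0$, while $c_0 < 0$ by Lemma~\ref{lem:ck}; hence $f_k(\ell) = -c_k(\ell-k)/c_0(\ell) \ge 0$ for all $k\ge 1$. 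Starting from $\gamma_0(0) = 1$ and $\gamma_0(\ell) = 0$ for $\ell < 0$, the recursion then propagates non-negativity to every $\ell \ge 0$. So $\gamma_0$ is a stationary measure by the reverse direction of Theorem~\ref{th-1}, unique up to scaling.

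Finally, for the normalisation, I would sum the identity $\pi(\ell) = \pi(0)\gamma_0(\ell)$ over $\ell \in \N_0$; if $\pi$ is a stationary distribution then $\sum_\ell\pi(\ell) = 1$ forces $\sum_{\ell=0}^\infty\gamma_0(\ell) = \pi(0)^{-1} < \infty$, giving $\pi(0) = \big(\sum_{\ell=0}^\infty\gamma_0(\ell)\big)^{-1}$. The genuinely substantive points are the reduction $\sL = \sU = 0$ and the sign computation $f_k \ge 0$ for $k \ge 1$; the latter is precisely where the downwardly skip-free hypothesis $\om_- = -1$ is indispensable, since for larger downward jumps the coefficients $f_k$ can change sign and the clean inductive non-negativity argument breaks down. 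Everything else is a transcription of Theorem~\ref{th-1}.
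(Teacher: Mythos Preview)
Your proof is correct and follows the same overall structure as the paper: collapse the generating set to $\sL=\sU=0$ via $\om_-=-1$ and ({\bf A3}), read the bi-implication off Theorem~\ref{th-1}, and normalise.

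Two points of comparison are worth recording. First, for $\sL=\sU=0$ the paper argues more directly: since the only backward jump has size $1$, the state $0$ is reachable only from $1$, and irreducibility of $\N_0$ forces $\si_{\om_-}=1$; your detour through the indexation of Proposition~\ref{pro-1old} reaches the same conclusion but is heavier than needed. Second, and more interestingly, for positivity of $\gamma_0$ the paper simply invokes Theorem~\ref{th:posneg} (a forward reference that ultimately rests on Harris's existence result, Proposition~\ref{prop:exists}, plus Lemma~\ref{lem:nonzero}), whereas you give a self-contained inductive argument by computing directly that $f_k(\ell)\ge 0$ for all $k\ge 1$ when $\om_-=-1$. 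Your sign computation is correct (with $k'=k-1/2>0$ the set $B_k$ contains only forward jumps, so $c_k\ge 0$, while $c_0<0$ by Lemma~\ref{lem:ck}), and the resulting argument is more elementary: it avoids the forward reference and makes explicit exactly where the skip-free hypothesis enters. The paper's route is shorter on the page but leans on more machinery; yours isolates the structural reason the recursion preserves non-negativity.
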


\begin{proof}   Since $\om_-=-1$, then  $\sL=\sU$, see \eqref{eq:om=1}. Moreover, $\si_{\om_-}=1$ as otherwise the state zero could  not be reached. Hence, $\sL=\sU= \si_{\om_-}-1=0$ from \eqref{eq:om=1}. Consequently, $\pi(\ell)=\pi(0)\gamma_0(\ell)$, $\ell\in\N_0$, from Theorem \ref{th-1}. Furthermore, $m_*=\om_+-\om_--1=\om_+$ such that the expression for $\gamma_0(\ell)$, $\ell\in\N_0$, follows from \eqref{Eq-1}. 
Positivity of $\gamma(\ell)$, $\ell\in\N_0$, follows from Theorem \ref{th:posneg}.
If $\pi$ is a stationary distribution, then $1=\sum_{\ell=0}^\infty\pi(\ell)=\pi(0)\sum_{\ell=0}^\infty\gamma_0(\ell)$, and the conclusion follows. 
\end{proof}

Corollary \ref{cor-skipfree} leads to the classical birth-death process characterization. 

\begin{corollary}
Assume $\cT=\{-1,1\}$. A measure $\pi$ on $\N_0$ is a  stationary measure    of  $(\Omega,\mathcal{F})$  on $\N_0$  if and only if
$$\pi(\ell)=\pi(0)\prod_{j=1}^\ell\frac{\lambda_{1}(j-1)}{\lambda_{-1}(j)},\quad\text{for}\quad \ell>0,$$
where $\pi(0)=\lt(1+\sum_{\ell=1}^{\infty}\prod_{j=1}^\ell\frac{\lambda_{1}(j-1)}{\lambda_{-1}(j)}\rt)^{-1}$ in the case of a stationary distribution.
\end{corollary}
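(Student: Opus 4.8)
The plan is to read the statement off as the $\Omega=\{-1,1\}$ specialization of Corollary~\ref{cor-skipfree}, whose hypothesis $\omega_-=-1$ is met. That corollary already supplies $\sL=\sU=0$ and $\pi(\ell)=\pi(0)\gamma_0(\ell)$ with $\gamma_0(0)=1$ and $\gamma_0(\ell)=\sum_{k=1}^{\omega_+}\gamma_0(\ell-k)f_k(\ell)$ for $\ell>0$. The decisive simplification is that here $\omega_+=1$, so $m_*=\omega_+-\omega_--1=1$ and the recursion collapses to the single-term, first-order relation $\gamma_0(\ell)=\gamma_0(\ell-1)\,f_1(\ell)$.

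The heart of the argument is therefore to evaluate $f_1(\ell)=-c_1(\ell-1)/c_0(\ell)$ explicitly from the definitions \eqref{eq:c}--\eqref{eq:Bk}. With $\omega_-=-1$, the index for $k=0$ is $k'=\omega_-+1/2=-1/2<0$, so $\text{sgn}(k')=-1$ and the condition $k'(\omega-k')>0$ selects $\omega<-1/2$, i.e.\ $B_0=\{-1\}$, giving $c_0(\ell)=-\lambda_{-1}(\ell)$ (negative, consistent with Lemma~\ref{lem:ck}). For $k=1$ the index is $k'=1/2>0$, so $\text{sgn}(k')=1$ and $k'(\omega-k')>0$ selects $\omega>1/2$, i.e.\ $B_1=\{1\}$, giving $c_1(\ell)=\lambda_1(\ell)$. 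Substituting yields $f_1(\ell)=\lambda_1(\ell-1)/\lambda_{-1}(\ell)$.

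With $f_1$ identified, the recursion $\gamma_0(\ell)=\gamma_0(\ell-1)\,\lambda_1(\ell-1)/\lambda_{-1}(\ell)$ telescopes from $\gamma_0(0)=1$ to $\gamma_0(\ell)=\prod_{j=1}^{\ell}\lambda_1(j-1)/\lambda_{-1}(j)$, so that $\pi(\ell)=\pi(0)\prod_{j=1}^{\ell}\lambda_1(j-1)/\lambda_{-1}(j)$ for $\ell>0$, which is the asserted formula. In the stationary-distribution case I would impose $\sum_{\ell\geq0}\pi(\ell)=1$ and solve for $\pi(0)$, obtaining $\pi(0)=(1+\sum_{\ell\geq1}\prod_{j=1}^{\ell}\lambda_1(j-1)/\lambda_{-1}(j))^{-1}$.

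I expect no serious obstacle: the only step requiring care is the sign-and-set bookkeeping for $c_0,c_1,B_0,B_1$, after which the result is a one-line telescoping product followed by a normalization. As a consistency check I would verify well-posedness of the recursion at $\ell=1$ using the convention \eqref{eq:lambda0} together with the fact, noted inside the proof of Corollary~\ref{cor-skipfree}, that $\si_{\omega_-}=1$ and hence $\lambda_{-1}(1)=\lambda_{\omega_-}(\si_{\omega_-})>0$, since state $0$ must be reachable.
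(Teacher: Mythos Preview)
Your proposal is correct and follows essentially the same approach as the paper's proof: both invoke Corollary~\ref{cor-skipfree}, use $\omega_+=1$ to reduce the recursion to $\gamma_0(\ell)=\gamma_0(\ell-1)f_1(\ell)$, compute $B_0=\{-1\}$, $B_1=\{1\}$, $c_0(\ell)=-\lambda_{-1}(\ell)$, $c_1(\ell)=\lambda_1(\ell)$ directly from the definitions, and then telescope to obtain the product formula. Your sign-and-set bookkeeping for $B_0,B_1$ is in fact spelled out more carefully than in the paper.
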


\begin{proof} In particular, the process is downwardly skip-free. Furthermore,  $\om_+=1$, such that for $\ell>0$, we have from Corollary \ref{cor-skipfree}, \eqref{eq:f} and \eqref{eq:c},
$$\gamma_0(\ell)=\sum_{k=1}^{\om_+}\gamma_0(\ell-k)f_k(\ell)=\gamma_0(\ell-1)f_1(\ell)=\prod_{j=1}^\ell f_1(j)=(-1)^{\ell}\frac{\prod_{j=1}^\ell c_1(j-1)}{\prod_{j=1}^\ell c_0(j)}.$$
By definition of $B_k$ and $c_k(\ell)$, $k=0,1$ ($m_*=1$), we have $B_0=\{-1\}$, $B_1=\{1\}$,
$$c_0(\ell)=-\lambda_{-1}(\ell),\quad c_1(\ell)=\lambda_{1}(\ell). $$
By insertion, this yields
$$\gamma_0(\ell)=\prod_{j=1}^\ell\frac{\lambda_{1}(j-1)}{\lambda_{-1}(j)},\qquad \ell>0,$$
and the statement follows from Corollary \ref{cor-skipfree}.
\end{proof}

\subsection{Upwardly skip-free Markov chains}\label{sec:skipup}

\red{In general, we are not able  to give conditions for when an upwardly skip-free Markov chain has a unique stationary measure  (up to a scalar)  and when it has  more than one or even  infinitely many  linearly independent ones. However, in a particular case,   if there is not a unique stationary measure, we establish all stationary measures as an explicit  one-parameter family of measures (Corollary \ref{cor:fraction}). If the transition functions are polynomial then we show a stationary measure is always unique (Corollary \ref{Cor:uniqueness}). Thus, we need non-polynomial transition rates for non-uniqueness and   give one such example in Example \ref{ex:nonunique}.}

\begin{proposition}\label{prop:up}
Assume $\omega_-=-2$ and $\omega_+=1$, hence $\sU=\sL+1$. 
A non-zero measure $\pi$ on $\N_0$ is a stationary measure of $(\Omega,\cF)$ on $\N_0$ if and only if
\begin{equation}\label{eq:phi}
h(x)(\phi(x+1)+1)=\phi(x)\phi(x+1),\quad x\ge\sU+2,
\end{equation}
where 
\begin{align}
\phi(x)&=\frac{\pi(x-1)}{\pi(x)}\frac{\lambda_{-1}(x-1)+\lambda_{-2}(x-1)}{\lambda_{-2}(x)},\nonumber \\
h(x) &=\frac{(\lambda_{-1}(x-1)+\lambda_{-2}(x-1))(\lambda_{-1}(x)+\lambda_{-2}(x))}{\lambda_1(x-1)\lambda_{-2}(x)},\label{eq:h}
\end{align}
for $x\ge \sU+2$, and 
 \begin{equation}\label{eq:smallx}
\pi(x)= \frac{\pi(x+1)(\lambda_{-1}(x+1)+\lambda_{-2}(x+1))+\pi(x+2)\lambda_{-2}(x+2) }{\lambda_1(x)},\quad 0\le x\le \sU,
\end{equation}
with  $\lambda_{-1}\equiv0$ for convenience if there are no jumps of size $-1$.
If this is the case, then
\begin{equation}\label{eq:piphi}
\pi(x)=\pi(\sU+1)\prod_{j=\sU+1}^{x-1}\frac{\lambda_{-1}(j)+\lambda_{-2}(j)}{\lambda_{-2}(j+1)\phi(j+1)}, \quad x\ge \sU+2,
 \end{equation}
\end{proposition}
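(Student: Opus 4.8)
The plan is to specialise Proposition \ref{pro-2} to $\om_-=-2$, $\om_+=1$ and then reduce the resulting three-term recursion to the first-order relation \eqref{eq:phi} by a ratio substitution. Here $m_*=\om_+-\om_--1=2$, and evaluating \eqref{eq:Bk}--\eqref{eq:c} gives $B_0=\{-2\}$, $B_1=\{-1,-2\}\cap\Omega$, $B_2=\{1\}$, so that $c_0(\ell)=-\lambda_{-2}(\ell)$, $c_1(\ell)=-(\lambda_{-1}(\ell)+\lambda_{-2}(\ell))$ and $c_2(\ell)=\lambda_1(\ell)$ (with the convention $\lambda_{-1}\equiv0$ if $-1\notin\Omega$). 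Inserting these into Proposition \ref{pro-2} and writing $x=\ell-2$, stationarity of $\pi$ becomes equivalent to
\begin{equation}\label{eq:planfb}
\pi(x)\lambda_1(x)=\pi(x+1)(\lambda_{-1}(x+1)+\lambda_{-2}(x+1))+\pi(x+2)\lambda_{-2}(x+2),\qquad x\ge 0.
\end{equation}
I will use throughout that $\pi>0$ (a non-zero stationary measure on the irreducible class $\N_0$ is strictly positive) and that $\lambda_1(x)>0$ for all $x\ge0$, since $\si_{\om_+}=\si_+=0$ under ({\bf A3}). Writing $b(x):=\lambda_{-1}(x)+\lambda_{-2}(x)$ and recalling $\sU=\si_{\om_-}-1$, note that $\lambda_{-2}(x)>0$ exactly for $x\ge\sU+1$, and hence $b(x)>0$ for $x\ge\sU+1$.

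For $0\le x\le\sU$ there is nothing to do beyond dividing \eqref{eq:planfb} by $\lambda_1(x)>0$: this is precisely \eqref{eq:smallx}. For the remaining indices I pass to $\phi$. Fix $y\ge\sU+1$, so that $b(y),b(y+1),\lambda_{-2}(y+1),\lambda_{-2}(y+2)$ are all positive. The definition of $\phi$ rearranges to $\pi(y+1)b(y+1)=\phi(y+2)\,\pi(y+2)\lambda_{-2}(y+2)$; substituting this into \eqref{eq:planfb} at index $y$ collapses its right-hand side and gives
\begin{equation}\label{eq:planstar}
\pi(y)\lambda_1(y)=\pi(y+2)\lambda_{-2}(y+2)\,(\phi(y+2)+1).
\end{equation}
Independently, multiplying the definitions of $\phi(y+1)$ and $\phi(y+2)$ yields $\pi(y)/\pi(y+2)=\phi(y+1)\phi(y+2)\,\lambda_{-2}(y+1)\lambda_{-2}(y+2)/(b(y)b(y+1))$. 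Equating this with the ratio read off from \eqref{eq:planstar}, cancelling the common factor $\lambda_{-2}(y+2)$, and recognising $b(y)b(y+1)/(\lambda_1(y)\lambda_{-2}(y+1))=h(y+1)$ from \eqref{eq:h}, I obtain $\phi(y+1)\phi(y+2)=h(y+1)(\phi(y+2)+1)$, which is \eqref{eq:phi} with $x=y+1\ge\sU+2$. The lower limit $x\ge\sU+2$ is exactly what the support bookkeeping forces: the product step divides by $b(y)$, and since $\lambda_{-1}$ may vanish identically, $b(y)$ is guaranteed positive only for $y\ge\sU+1$; this is why the flux balance at $y=\sU$ is retained in the form \eqref{eq:smallx} and not transformed.

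Formula \eqref{eq:piphi} is then immediate by telescoping: the definition of $\phi$ gives $\pi(x)=\pi(x-1)\,b(x-1)/(\lambda_{-2}(x)\phi(x))$, and iterating from $x=\sU+1$ upward — all factors being defined and non-zero for the indices involved — produces the stated product. For the converse I would simply reverse each step: under the standing positivity every manipulation above is an equivalence, so if a positive $\pi$ satisfies \eqref{eq:smallx} for $0\le x\le\sU$ together with \eqref{eq:phi} for $x\ge\sU+2$, then running the computation backwards recovers \eqref{eq:planfb} for all $x\ge0$, whence $\pi$ is stationary by Proposition \ref{pro-2}. I expect the only genuine obstacle to be the careful tracking of the supports of $\lambda_1$ and $\lambda_{-2}$ (equivalently, where $\phi$, $h$ and $b$ are non-zero), both to license each division and to place the split between \eqref{eq:smallx} and \eqref{eq:phi} at the correct index.
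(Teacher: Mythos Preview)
Your proof is correct and follows essentially the same route as the paper's: both specialise the flux balance identity (Proposition~\ref{pro-2}, equivalently \eqref{eq:flux2}) to $\om_-=-2$, $\om_+=1$, then pass to ratios to obtain \eqref{eq:phi}, treat the small indices $0\le x\le\sU$ separately as \eqref{eq:smallx}, and telescope for \eqref{eq:piphi}. Your version is a bit more explicit about the support bookkeeping (where $\lambda_{-2}$, $b$, and hence $\phi$, $h$ are nonzero and the divisions are licensed), but the approach is the same.
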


\begin{proof}
Recall the master equation for a stationary measure $\pi$, in the form of \eqref{eq:flux2}:
\begin{equation} \label{Eq-pi}
\pi(x)(\lambda_{-1}(x)+\lambda_{-2}(x))+\pi(x+1)\lambda_{-2}(x+1) = \pi(x-1)\lambda_{1}(x-1),\quad x\ge1.
\end{equation}
Define $\phi$ and $h$ as in the statement for $x\ge\sU+2$ (if $x=\sU+1$, then $\phi(x), h(x)$ might be zero; if $x<\sU+1$, then division by zero occurs  as $\sU+1=\si_{\omega_-}$). Dividing $\pi(x+1)\lambda_{-2}(x+1)$ on both sides of \eqref{Eq-pi} yields 
\begin{align*}
\frac{\pi(x)}{\pi(x+1)}\frac{\lambda_{-1}(x)+\lambda_{-2}(x)}{\lambda_{-2}(x+1)}+1 =& \frac{\pi(x-1)}{\pi(x)}\frac{\lambda_{-1}(x-1)+\lambda_{-2}(x-1)}{\lambda_{-2}(x)}\\
&\cdot\frac{\pi(x)}{\pi(x+1)}\frac{\lambda_{-1}(x)+\lambda_{-2}(x)}{\lambda_{-2}(x+1)}\\
&\cdot\frac{\lambda_1(x-1)\lambda_{-2}(x)}{(\lambda_{-1}(x-1)+\lambda_{-2}(x-1))(\lambda_{-1}(x)+\lambda_{-2}(x))},
\end{align*}
and  the identity \eqref{eq:phi}   follows. By rewriting the master equation, then \eqref{eq:smallx} follows. The calculations can be done in reverse order yielding the bi-implication. Equation \eqref{eq:piphi} follows by induction.
\end{proof}

\begin{lemma}\label{lem:hrec}
Let  $h\colon [N,\infty)\cap\N_0\to(0,\infty)$  and $\phi\colon [N,\infty)\cap\N_0\to\R$ be functions with $N\in\N_0$, and  such that 
\begin{equation}\label{eq:recursionphi}
h(x)(\phi(x+1)+1)=\phi(x)\phi(x+1),\quad x\ge N.
\end{equation}
Let $\phi^*$ be a positive number. Then,  
 $\phi$ is a positive function    with $\phi(N)=\phi^*$, if and only if  
$$\Psi_2(N)\le \phi^*\le\Psi_1(N),$$
where
$\Psi_1(x)=\lim_{k\to\infty} \psi(x,2k-1)$, $\Psi_2(x)=\lim_{k\to\infty} \psi(x,2k)$, and $\psi(x,k)$ is determined recursively by 
\begin{equation}\label{Eq-recurrence}
\psi(x,k)=h(x)\left(1+\frac{1}{\psi(x+1,k-1)}\right), \quad x\ge N,\quad k\ge 1,
\end{equation} with $\psi(x,0)=h(x)$.
\end{lemma}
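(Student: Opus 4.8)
The plan is to rewrite the quadratic recursion as an equivalent first-order recursion that can be iterated either forwards or backwards, and to recognise $\psi(x,\cdot)$ as the backward iterates (continued-fraction convergents) of a single decreasing map whose even and odd subsequences bracket the admissible initial values.

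First I would reformulate \eqref{eq:recursionphi}. Because $h(x)>0$, for $\phi(x+1)>0$ the equation is equivalent to
\[\phi(x)=T_x(\phi(x+1)),\qquad T_x(y):=h(x)\Bigl(1+\tfrac1y\Bigr),\]
and, solving for $\phi(x+1)$, to $\phi(x+1)=F_x(\phi(x))$ with $F_x(w)=h(x)/(w-h(x))$. Here $T_x\colon(0,\infty)\to(h(x),\infty)$ is a strictly decreasing continuous bijection with inverse $F_x$. Two consequences are immediate: a positive $\phi$ is determined forward by its value $\phi(N)$, and positivity throughout forces $\phi(x)>h(x)$ for every $x\ge N$, since otherwise $\phi(x+1)=F_x(\phi(x))$ is non-positive or undefined. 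Comparing with \eqref{Eq-recurrence} shows $\psi(x,k)=T_x(\psi(x+1,k-1))$ and $\psi(x,0)=h(x)$, so $\psi(x,k)$ is exactly the value of $\phi(x)$ produced by imposing the terminal condition $\phi(x+k)=h(x+k)$ and iterating $T$ backwards.

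The technical heart is a monotonicity lemma proved by induction on $k$, using only that each $T_x$ is decreasing and continuous, uniformly in $x$. The base case is $\psi(x,1)=h(x)(1+1/h(x+1))>h(x)=\psi(x,0)$ for every $x$. Since prepending the decreasing map $T_x$ reverses inequalities, the statement ``$\psi(\cdot,k)<\psi(\cdot,k+1)$ for all $x$'' propagates to ``$\psi(\cdot,k+1)>\psi(\cdot,k+2)$ for all $x$'', and the sign of $\psi(\cdot,k+2)-\psi(\cdot,k)$ likewise alternates as $k$ increases. This yields, for each fixed $x$, that the even-indexed sequence $\psi(x,0)<\psi(x,2)<\cdots$ increases, the odd-indexed sequence $\psi(x,1)>\psi(x,3)>\cdots$ decreases, and every even term lies below every odd term. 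Hence the limits $\Psi_2(x)=\lim_k\psi(x,2k)$ and $\Psi_1(x)=\lim_k\psi(x,2k-1)$ exist with $h(x)<\Psi_2(x)\le\Psi_1(x)$; passing to the limit in $\psi(x,2k)=T_x(\psi(x+1,2k-1))$ and $\psi(x,2k-1)=T_x(\psi(x+1,2k-2))$ and using continuity gives the identities $\Psi_2(x)=T_x(\Psi_1(x+1))$ and $\Psi_1(x)=T_x(\Psi_2(x+1))$.

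For the ``only if'' direction, given a positive solution $\phi$ I would establish the bracketing $\psi(x,2k)<\phi(x)<\psi(x,2k+1)$ for all $k\ge0$ and all $x\ge N$ by induction: the base case follows from $\phi(x)>h(x)=\psi(x,0)$ together with $\phi(x)=T_x(\phi(x+1))<T_x(h(x+1))=\psi(x,1)$, and two applications of the decreasing map $T_x$ advance the bracket from level $2k$ to level $2k+2$. Letting $k\to\infty$ gives $\Psi_2(x)\le\phi(x)\le\Psi_1(x)$, in particular $\Psi_2(N)\le\phi^*\le\Psi_1(N)$. For the ``if'' direction I would use the identities above to see that $F_x$ maps the interval $I(x):=[\Psi_2(x),\Psi_1(x)]\subseteq(h(x),\infty)$ onto $I(x+1)$; hence $I(\cdot)$ is forward invariant, and if $\phi(N)=\phi^*\in I(N)$ then the forward orbit satisfies $\phi(x)\in I(x)\subseteq(0,\infty)$ for all $x$, so $\phi$ is a positive solution. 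I expect the monotonicity lemma, and in particular verifying that the even and odd convergents bracket the true orbit tightly enough for the endpoints $\phi^*=\Psi_2(N)$ and $\phi^*=\Psi_1(N)$ to be admissible, to be the main obstacle; the remaining steps are routine manipulations of the two monotone maps $T_x$ and $F_x$.
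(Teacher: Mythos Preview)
Your proposal is correct and follows essentially the same approach as the paper: both establish the bracketing $\psi(x,2k)<\phi(x)<\psi(x,2k+1)$ by induction using that the map $T_x(y)=h(x)(1+1/y)$ is decreasing, and both prove the converse by passing to the limit in the recurrence to obtain $\Psi_1(x)=T_x(\Psi_2(x+1))$, $\Psi_2(x)=T_x(\Psi_1(x+1))$ and then showing the interval $[\Psi_2(x),\Psi_1(x)]$ is forward-invariant. The only difference is cosmetic: the paper invokes Khinchin's theorem on continued-fraction convergents for the monotonicity of $\psi(x,\cdot)$, whereas you prove it directly from the fact that each $T_x$ reverses order, which makes your argument self-contained.
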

 
 \begin{proof}
Let $\psi(x,k)$ be as in the statement and $\phi$ a positive function fulfilling \eqref{eq:recursionphi}. 
Note that $\psi(x,k)$ is the $k$-th convergent  of a generalized continued fraction, hence $\psi(x,2k)$ is increasing in $k\ge0$ and $\psi(x,2k+1)$ is decreasing in $k\ge0$. Indeed,  this follows from Theorem 4 (monotonicity of even and odd convergents) of \cite{Khinchin}.   See also the proof of Corollary \ref{cor:fraction}.

By induction, we will show that
\begin{equation*}
\psi(x,2k)<\phi(x)< \psi(x,2k+1),\quad  x\ge N,\quad k\ge 0,
\end{equation*}
from which it   follows that
\begin{equation}\label{eq:claim}
\Psi_2(x)=\lim_{k\to\infty}\psi(x,2k)\le\phi(x)\le \Psi_1(x)=\lim_{k\to\infty}\psi(x,2k-1),\quad  x\ge N.
\end{equation}

 For the base case,  it follows from \eqref{eq:recursionphi} that $\psi(x,0)=h(x)<\phi(x)$ for $x\ge N$, and thus
 $$\psi(x,1)=h(x)\left(1+\frac{1}{\psi(x+1,0)}\right)>h(x)\left(1+\frac{1}{\phi(x+1)}\right)=\phi(x).$$
 For the induction step, assume $\psi(x,2k)<\phi(x)< \psi(x,2k+1)$ for $x\ge N$ and some $k\ge0$. Then, using \eqref{Eq-recurrence},
\begin{align*}
\psi(x,2k+2)&=h(x)\left(1+\frac{1}{\psi(x+1,2k+1)}\right)<h(x)\left(1+\frac{1}{\phi(x+1)}\right)=\phi(x),\\
\psi(x,2k+3)&=h(x)\left(1+\frac{1}{\psi(x+1,2k+2)}\right)>h(x)\left(1+\frac{1}{\phi(x+1)}\right)=\phi(x).
\end{align*}
If $\phi(N)=\phi^*$, then the first implication follows from \eqref{eq:claim}. For the reverse implication, assume $\Psi_2(N)\le\phi(N)\le \Psi_1(N).$  Note from \eqref{eq:recursionphi} that $\phi(x+1)$ is  positive only if $h(x)<\phi(x)$. We will show that   $\Psi_2(N)\le \phi(N)\le\Psi_1(N),$ implies  $\Psi_2(x)\le\phi(x)\le \Psi_1(x) $ for all $x\ge N$, hence also $h(x)=\psi(x,0)< \Psi_2(x)\le\phi(x)$ for all $x\ge N$, and we are done.  
Letting $k\to\infty$ in \eqref{Eq-recurrence} yields
\[\Psi_1(x)=h(x)\bigg(1+\frac{1}{\Psi_2(x+1)}\bigg),\quad \Psi_2(x)=h(x)\bigg(1+\frac{1}{\Psi_1(x+1)}\bigg),\quad x\ge N.\]
Hence, it follows from the induction hypothesis that
$$ \Psi_2(x+1)=\frac1{\frac{\Psi_1(x)}{h(x)}-1}\le \frac1{\frac{\phi(x)}{h(x)}-1}=\phi(x+1)\le \frac1{\frac{\Psi_2(x)}{h(x)}-1}=\Psi_1(x+1),$$ 
using \eqref{eq:recursionphi}, and the claim follows.  
\end{proof}

Below, we give a general condition for uniqueness and show that uniqueness does not always hold by example. In Section \ref{sec:signed}, we give concrete cases where uniqueness applies.

\begin{corollary}\label{cor:fraction}
Assume $\omega_-=-2$ and $\omega_+=1$.  Choose $\pi^*>0$ and $\Psi_2(\sU+2)\le \phi^*\le\Psi_1(\sU+2),$ where $\Psi_1,\Psi_2$ are as in Lemma \ref{lem:hrec} and $h$ as in \eqref{eq:h}. Let $\phi$ be a solution to \eqref{eq:recursionphi}  in Lemma \ref{lem:hrec} with $\phi(\sU+2)=\phi^*$. Then,  \eqref{eq:piphi} and \eqref{eq:smallx} define a stationary measure $\pi$ of $(\Omega,\cF)$ on $\N_0$  by setting $\pi(\sU+1)=\pi^*$.

The measure is unique  if and only if $\Psi_2(\sU+2)=\Psi_1(\sU+2)$, if and only if the following sum is divergent for $x=\sU+2$,
\begin{align*}
H(x)&=\sum_{n=0}^\infty\left( h(x+2n+1)\prod_{i=0}^n \frac{h(x+2i)}{h(x+2i+1)}+\prod_{i=0}^n \frac{h(x+2i+1)}{h(x+2i)}\right) \\
&= \sum_{n=0}^\infty\frac{(\lambda_{-1}(x+2n)+\lambda_{-2}(x+2n))(\lambda_{-1}(x-1)+\lambda_{-2}(x-1))}{\lambda_1(x+2n)\lambda_{-2}(x+2n+1)}\frac 1{Q_n(x)}  \\
&\quad+\sum_{n=0}^\infty \frac{\lambda_{-1}(x+2n+1)+\lambda_{-2}(x+2n+1)}{\lambda_{-1}(x-1)+\lambda_{-2}(x-1)} Q_n(x),
\end{align*}
where
$$Q_n(x)=\prod_{i=0}^n\frac{\lambda_1(x+2i-1)\lambda_{-2}(x+2i)}{\lambda_1(x+2i)\lambda_{-2}(x+2i+1)}.$$
\end{corollary}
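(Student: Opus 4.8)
The plan is to parameterise all stationary measures by a pair $(\pi^*,\phi^*)$, where $\pi^*=\pi(\sU+1)$ is a harmless scaling and $\phi^*=\phi(\sU+2)$ is the genuine free parameter, and then to translate the uniqueness question into the convergence of the continued fraction underlying Lemma \ref{lem:hrec}.

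First I would settle the existence assertion. Given $\phi^*\in[\Psi_2(\sU+2),\Psi_1(\sU+2)]$, Lemma \ref{lem:hrec} (applied with $N=\sU+2$ and $h$ as in \eqref{eq:h}, which is strictly positive on $[\sU+2,\infty)\cap\N_0$ since $\lambda_{-2},\lambda_1$ and $\lambda_{-1}+\lambda_{-2}$ are positive there) yields a positive solution $\phi$ of \eqref{eq:recursionphi} with $\phi(\sU+2)=\phi^*$. Setting $\pi(\sU+1)=\pi^*$ and defining $\pi$ by \eqref{eq:piphi} for $x\ge\sU+2$ and by \eqref{eq:smallx} for $0\le x\le\sU$, positivity of $\pi$ is immediate: every factor in \eqref{eq:piphi} is a ratio of positive quantities, and \eqref{eq:smallx} expresses $\pi(x)$ as a nonnegative combination divided by $\lambda_1(x)>0$ (recall $\si_+=0$ under $({\bf A3})$). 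Proposition \ref{prop:up} then certifies that $\pi$ is a stationary measure of $(\Omega,\cF)$ on $\N_0$.

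Next I would prove the first equivalence via a bijection. Solving \eqref{eq:recursionphi} for $\phi(x+1)$ gives the deterministic upward recursion $\phi(x+1)=h(x)/(\phi(x)-h(x))$, so $\phi$ is uniquely determined by $\phi(\sU+2)$; combined with \eqref{eq:piphi}--\eqref{eq:smallx}, the pair $(\pi^*,\phi^*)$ determines $\pi$, and the map $\phi^*\mapsto[\pi]$ onto projective classes of measures is injective because $\phi^*$ fixes the ratio $\pi(\sU+2)/\pi(\sU+1)$ through \eqref{eq:phi}. Conversely, any stationary measure is strictly positive on the irreducible class $\N_0$, so its associated $\phi$ from \eqref{eq:phi} is a positive solution of \eqref{eq:recursionphi}, whence Lemma \ref{lem:hrec} forces $\phi(\sU+2)\in[\Psi_2(\sU+2),\Psi_1(\sU+2)]$, and $\pi$ is recovered from $(\pi(\sU+1),\phi(\sU+2))$. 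Thus stationary measures modulo scaling correspond bijectively to the interval $[\Psi_2(\sU+2),\Psi_1(\sU+2)]$, and uniqueness holds if and only if this interval degenerates to a point, i.e. $\Psi_2(\sU+2)=\Psi_1(\sU+2)$.

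Finally, the second equivalence is the continued-fraction heart of the argument. I would observe that $\psi(\sU+2,\cdot)$ from \eqref{Eq-recurrence} is exactly the sequence of convergents of the continued fraction with partial numerators $a_i=h(\sU+1+i)$ and denominators $b_i=h(\sU+2+i)$; by the monotonicity of even and odd convergents (Theorem 4 of \cite{Khinchin}) the even convergents increase to $\Psi_2(\sU+2)$ and the odd ones decrease to $\Psi_1(\sU+2)$, and the successive differences equal $(-1)^{k-1}\prod_{i=1}^k a_i/(B_kB_{k-1})$, with $B_k$ the continued-fraction denominators satisfying $B_k=b_kB_{k-1}+a_kB_{k-2}$. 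A short check shows these differences are strictly decreasing in absolute value, so the gap $\Psi_1(\sU+2)-\Psi_2(\sU+2)$ equals $\lim_k \prod_{i=1}^k a_i/(B_kB_{k-1})$ and hence vanishes if and only if $B_kB_{k-1}/\prod_{i=1}^k a_i\to\infty$. The remaining, most laborious, step is to identify this divergence with $H(\sU+2)=\infty$: unfolding the three-term recurrence for $B_k$ and grouping even and odd indices should match the partial sums of $B_kB_{k-1}/\prod_{i=1}^k a_i$, up to bounded positive factors, with the partial sums of the first ($h$-explicit) form of $H$, while the passage to the second form is routine telescoping using $\prod_{i=0}^n h(x+2i)/h(x+2i+1)=\mu(x-1)/(\mu(x+2n+1)Q_n(x))$ with $\mu:=\lambda_{-1}+\lambda_{-2}$, which cancels all but the boundary $\mu$-factors. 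I expect this bookkeeping---reconciling the continued-fraction denominators with the two explicit forms of $H$---to be the main obstacle; an alternative, possibly cleaner, route is to iterate the fixed-point identity $\Psi_1(x)-\Psi_2(x)=\tfrac{h(x)}{\Psi_1(x+1)\Psi_2(x+1)}\bigl(\Psi_1(x+1)-\Psi_2(x+1)\bigr)$ and compare the resulting infinite product against $H$.
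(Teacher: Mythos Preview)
Your treatment of the existence assertion and the first equivalence is essentially the paper's argument: apply Proposition~\ref{prop:up} and Lemma~\ref{lem:hrec} with $N=\sU+2$, and observe that the admissible $\phi^*$ parametrise the projective classes of stationary measures.

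For the second equivalence, however, you take a substantially harder route than the paper. You propose to track the gap $\Psi_1-\Psi_2$ through the determinant formula for successive convergents and then match $B_kB_{k-1}/\prod c_i$ against the partial sums of $H$ by unfolding the three-term recurrence. This can be made to work, but it amounts to reproving the Seidel--Stern convergence criterion in this particular instance, and the ``bookkeeping'' you anticipate is real. The paper bypasses all of it: it first recognises that the generalised continued fraction has $b_n=c_{n+1}=h(x+n)$, then applies the standard \emph{equivalence transformation} to obtain a simple continued fraction $a_0+\tfrac{1}{a_1+\tfrac{1}{a_2+\cdots}}$ with
\[
a_{2n}=h(x+2n+1)\prod_{i=0}^n\frac{h(x+2i)}{h(x+2i+1)},\qquad a_{2n+1}=\prod_{i=0}^n\frac{h(x+2i+1)}{h(x+2i)}.
\]
At that point the equivalence $\Psi_1(\sU+2)=\Psi_2(\sU+2)\Leftrightarrow \sum_n a_n=\infty$ is exactly \cite[Theorem~10]{Khinchin} (Seidel--Stern), and $\sum_n a_n$ is $H(x)$ by inspection. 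The passage to the second, $\lambda$-explicit form of $H$ is then just substitution of \eqref{eq:h}, as you note. So the missing ingredient in your plan is the equivalence transformation plus the off-the-shelf criterion; with those, the ``main obstacle'' you identify disappears.
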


\begin{proof}
The first part of the proof is an application of Proposition \ref{prop:up} and Lemma \ref{lem:hrec} with $N=\sU+2$. The last bi-implication follows by noting that $\psi(x,k)$ in Lemma \ref{lem:hrec} is the $k$-th convergent of a generalized continued fraction,
\begin{equation*}
b_0+\cfrac{c_1}{b_1+\cfrac{c_2}{b_2+\cfrac{c_3}{b_3+ \cdots\vphantom{\cfrac{1}{1}}  }}} = h(x)(1+\cfrac{1}{h(x+1)(1+\cfrac{1}{h(x+2)(1+\cfrac{1}{h(x+3)(1+ \cdots\vphantom{\cfrac{1}{1}}  }}}
\end{equation*}
that is,
$b_n=c_{n+1}=h(x+n)$, $n\ge 0$. By transformation, this generalized continued fraction is equivalent to a (standard) continued fraction,
\begin{equation*}
a_0+\cfrac{1}{a_1+\cfrac{1}{a_2+\cfrac{1}{a_3+ \cdots\vphantom{\cfrac{1}{1}}  }}}
\end{equation*}
with 
$$a_{2n}=h(x+2n+1)\prod_{i=0}^n \frac{h(x+2i)}{h(x+2i+1)},\quad a_{2n+1}=\prod_{i=0}^n \frac{h(x+2i+1)}{h(x+2i)},\quad n\ge 0.$$
By \cite[Theorem 10]{Khinchin}, the continued fraction converges if and only if $\sum_{n=0}a_n=\infty$, which proves the bi-implication, noting that by the first part of the proof it is sufficient to check $x=\sU+2$, and using the concrete form of $h(x) $ in \eqref{eq:h}.
\end{proof}

We give a concrete example of a non-unique Markov chain in  Example \ref{ex:nonunique}  in Section \ref{sec:examples}.

\begin{corollary}\label{Cor:uniqueness}
Assume $\om_+=1$, $\om_-=-2$,  and that  $\lambda_{\om}(x)$, $\om\in\cT$, is a polynomial for  large $x$. Then, there is a unique stationary measure of $(\Omega,\cF)$ on $\N_0$.
\end{corollary}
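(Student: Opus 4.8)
The plan is to invoke Corollary \ref{cor:fraction}, whose hypotheses ($\omega_+=1$, $\omega_-=-2$) are exactly those assumed here: the stationary measure is unique if and only if the series $H(\sU+2)$ diverges. So the whole corollary reduces to proving $H(\sU+2)=\infty$ under the assumption that each $\lambda_\omega$ agrees with a polynomial for large $x$. I would work with the first representation of $H$ in Corollary \ref{cor:fraction} and write its $n$-th summand as $h(x+2n+1)\,T_n+T_n^{-1}$, where $T_n=\prod_{i=0}^n h(x+2i)/h(x+2i+1)>0$; every factor is positive since $h>0$ by \eqref{eq:h}. The elementary inequality $a+b\ge 2\sqrt{ab}$ then applies term by term and gives the clean lower bound
\[
H(x)\ \ge\ 2\sum_{n=0}^{\infty}\sqrt{h(x+2n+1)}.
\]
Hence it suffices to show that $\sum_{n}\sqrt{h(\sU+2+2n+1)}$ diverges.

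Next I would bring in the polynomial hypothesis. For large argument $\lambda_1$, $\lambda_{-1}+\lambda_{-2}$ and $\lambda_{-2}$ all coincide with polynomials, so by \eqref{eq:h} the function $h$ coincides with a fixed rational function for large $x$, with $h(x)\sim K\,x^{\rho}$ for some constant $K>0$ and integer exponent
\[
\rho=2\deg(\lambda_{-1}+\lambda_{-2})-\deg\lambda_{1}-\deg\lambda_{-2}.
\]
Consequently $\sqrt{h(x+2n+1)}\sim\sqrt{K}\,(2n)^{\rho/2}$, so the series above diverges precisely when $\rho\ge-2$; combined with the displayed bound this forces $H(\sU+2)=\infty$ and hence uniqueness. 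The entire argument therefore rests on the single inequality $\rho\ge-2$, equivalently $h(x)\ge c\,x^{-2}$ for large $x$.

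I expect establishing $\rho\ge-2$ to be the only real obstacle, and it is the step where the admissibility of the rates must enter. Some room comes for free: since no jump of size $-2$ may leave $\N_0$, the function $\lambda_{-2}$ vanishes at $0$ and $1$, which forces $\deg\lambda_{-2}\ge2$ and $\deg(\lambda_{-1}+\lambda_{-2})\ge\deg\lambda_{-2}$; this reduces the target to the transparent degree bound $\deg\lambda_{1}\le 2\deg(\lambda_{-1}+\lambda_{-2})-\deg\lambda_{-2}+2$, for which $\deg\lambda_{1}\le\deg(\lambda_{-1}+\lambda_{-2})+2$ is already sufficient. I would close this step by ruling out a dominant upward drift: were $\deg\lambda_{1}$ to exceed $\deg(\lambda_{-1}+\lambda_{-2})$, the upward rate would asymptotically swamp the combined downward rate, incompatible with the regularity required of a well-posed $(\Omega,\cF)$ on $\N_0$; under that regularity one has $\deg\lambda_{1}\le\deg(\lambda_{-1}+\lambda_{-2})$, giving $\rho\ge0$ with room to spare. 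Assembling the pieces—Corollary \ref{cor:fraction}, the AM-GM lower bound, the rational-function asymptotics, and the degree inequality—yields $H(\sU+2)=\infty$ and therefore a unique stationary measure of $(\Omega,\cF)$ on $\N_0$.
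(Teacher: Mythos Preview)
Your reduction to Corollary~\ref{cor:fraction} is the right starting point, and the AM--GM step $h(x+2n+1)T_n+T_n^{-1}\ge 2\sqrt{h(x+2n+1)}$ is correct. The difficulty is that this inequality discards exactly the information you need: the resulting lower bound $\sum_n\sqrt{h(x+2n+1)}$ can converge even though $H(x)$ diverges. Concretely, take $\Omega=\{1,-2\}$ with $\lambda_1(x)=1+x^5$, $\lambda_{-2}(x)=x(x-1)$, $\lambda_{-1}\equiv 0$. Then ({\bf A1})--({\bf A3}) hold, and in your notation $\rho=2\cdot 2-5-2=-3$, so $\sqrt{h(x+2n+1)}\asymp n^{-3/2}$ and your lower bound is a \emph{convergent} series. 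Yet the first series in $H(x)$ still diverges (its terms behave like $n^{m_2}$ after the cancellation coming from $Q_n$), so the corollary is true for this example but your argument does not see it.

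The attempted rescue via a degree inequality has two problems. First, the claim $\deg\lambda_{-2}\ge 2$ is false as stated: the hypothesis is only that $\lambda_{-2}$ agrees with a polynomial for \emph{large} $x$, so the zeros at $0,1$ forced by ({\bf A2}) impose no constraint on that eventual polynomial (e.g.\ $\lambda_{-2}(x)=\mathbbm{1}_{\{x\ge 2\}}$ has degree $0$). Second, and more seriously, nothing in ({\bf A1})--({\bf A3}) rules out $\deg\lambda_1>\deg(\lambda_{-1}+\lambda_{-2})$. The standing assumptions do not require recurrence, non-explosion, or any drift condition; transient and explosive chains are explicitly within scope (Proposition~\ref{prop:exists} provides a stationary measure regardless). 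So you cannot appeal to ``well-posedness'' to bound $\deg\lambda_1$.

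The paper avoids the AM--GM loss by estimating $Q_n(x)$ directly: writing $\log Q_n(x)$ as a telescoping sum and expanding $\log(\lambda_\omega(y)/\lambda_\omega(y+1))=-\tfrac{\deg\lambda_\omega}{y}+O(y^{-2})$ gives $Q_n(x)\asymp (x+2n)^{-(m_1+m_2)}$ with $m_1=\deg\lambda_1$, $m_2=\deg\lambda_{-2}$. Substituting into the \emph{first} series of $H(x)$ then shows its $n$-th term is bounded below by a positive constant (in fact $\asymp n^{m_2}\ge 1$), independently of $m_1$. That is the step you should replace the AM--GM bound with.
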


\begin{proof}
We prove that the first series in $H(x)$ in Corollary \ref{cor:fraction} diverges for   $x\ge\sU+2$, and hence $H(x)=\infty$. Let $m_1=\deg\lambda_{1}(x)$, $m_2=\deg\lambda_{-2}(x)$. We first provide asymptotics of $Q_n(x)$ for large $n$. By the Euler-Maclaurin formula with $x\ge\sU+2$,
\begin{align*}
Q_n(x)&=\exp\left(\sum_{i=0}^n \log\bigl(1-\tfrac{\lambda_1(x+2i-1)}{\lambda_1(x+2i)}\bigr)+\log\bigl(1-\tfrac{\lambda_{-2}(x+2i)}{\lambda_{-2}(x+2i+1)}\bigr)\right)\\
&= \exp\left(\sum_{i=0}^n \log\bigl(1-\tfrac{m_1}{x+2i}+O((x+2i)^{-2})\bigr)+\log\bigl(1-\tfrac{m_2}{x+2i}+O((x+2i)^{-2})\bigr) \right) \\
&=\exp\left(\sum_{i=0}^n -\tfrac{m_1}{x+2i}+O((x+2i)^{-2})-\tfrac{m_2}{x+2i}+O((x+2i)^{-2})\right) \\
&=\exp\Bigl(-m_1\log(x+2n)-m_2\log(x+2n)+O(1)\Bigr) \\
&=C(x,2n)(x+2n)^{-(m_1+m_2)},
  \end{align*}
  where  $0<C_1(x)<C(x,2n)<C_2(x)<\infty$ for all $n$, and $C_1(x), C_2(x)$ are positive constants  depending on $x$.  This implies that the terms in the first series of $H(x)$ are bounded uniformly in $n$ from below:
  \begin{align*}
 \liminf_{n\to\infty}\frac{\lambda_{-2}(x+2n)\lambda_{-2}(x-1)}{\lambda_1(x+2n)\lambda_{-2}(x+2n+1)}\frac 1{Q_n(x)}>0,
  \end{align*}
  and hence the first series in $H(x)$ diverges. 
\end{proof}

\section{Invariant vectors and existence of stationary measures}\label{sec:signed}

In the previous section, we  focused on   non-zero measures   on $\N_0$ and conditions to ensure   stationarity. 
  However, the requirement of a non-zero measure is not essential. In fact, the conditions of Proposition \ref{pro-2} and  Theorem \ref{th-1} characterize  equally  
  any real-valued sequence  that solves the master equation \eqref{eq:master}. Henceforth, we refer to the vector $(v_\sL,\ldots,v_\sU)\in\R^{\sU-\sL+1}$   as  a \emph{generator} and the sequence as an \emph{invariant vector}. This implies the  linear subspace in $\ell(\R)$ of invariant vectors   is  $(\sU-\sL+1)$-dimensional. Such vector might or might not be a \emph{signed invariant measure} depending on whether   the positive or the negative part  of the vector has finite one-norm.


We assume  ({\bf A1})-({\bf A3})  throughout Section \ref{sec:signed}.
 If the CTMC is recurrent (positive or null), then it is well known that there exists a unique stationary measure.
For transient CTMCs, including explosive chains, there also   exists a non-zero stationary measure in the setting considered.

\begin{proposition}\label{prop:exists}
  There exists a non-zero stationary measure of $(\Omega,\cF)$ on $\N_0$.
\end{proposition}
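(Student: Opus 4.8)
The plan is to reduce the existence question to a classical result on invariant measures for irreducible discrete-time chains and then transfer it back to continuous time. Under (\textbf{A1})--(\textbf{A3}) the state space $\N_0$ is a single PIC (Proposition \ref{pro-1old}), so the embedded jump chain is irreducible on $\N_0$; moreover the total exit rate $q_x := \sum_{\omega\in\Omega}\lambda_\omega(x)$ is strictly positive for every $x\in\N_0$, since otherwise $x$ would be absorbing, contradicting that $\N_0$ is a non-singleton closed communicating class. It therefore suffices to produce a non-zero invariant measure $m$ for the jump matrix $P=(p_{x,y})$ with $p_{x,y}=\lambda_{y-x}(x)/q_x$: setting $\pi(x):=m(x)/q_x$ then gives a non-negative, non-zero solution of the master equation \eqref{eq:master}, i.e.\ a stationary measure of $(\Omega,\cF)$. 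Indeed, under this correspondence $mP=m$ is equivalent to $\sum_{x\neq y}\pi(x)\lambda_{y-x}(x)=\pi(y)q_y$ for all $y$, which is exactly \eqref{eq:master}.

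I would then split into the recurrent and transient cases for the jump chain. If $P$ is recurrent, the classical construction via expected numbers of visits to each state between successive returns to a fixed reference state $x_0$ yields a strictly positive invariant measure (unique up to scaling), and we are done. The substantive case is the transient one, where such a construction in general produces only a sub-invariant measure. Here I invoke \cite{H57}, which guarantees that an irreducible transient chain of the kind under consideration admits a non-zero stationary measure. The work is in checking that the hypotheses of that result hold in our setting; this reduces to irreducibility of $P$ on $\N_0$ (already established) together with the regularity supplied by (\textbf{A1})--(\textbf{A2}): finiteness of $\Omega$, so that each balance equation involves only finitely many states, and the threshold structure $\lambda_\omega(x)>0\iff x\ge\si_\omega$.

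The main obstacle is precisely this transient case: securing a genuinely invariant rather than merely sub-invariant measure, and matching the hypotheses of \cite{H57} to the present normalization. Note that explosivity of the CTMC is \emph{not} an obstruction, since a stationary measure is defined here purely through the algebraic equation \eqref{eq:master} (equivalently, through Theorem \ref{th-1}), not through any probabilistic convergence. As a self-contained alternative to \cite{H57}, one could argue by truncation: for each $n$ equip $\{0,\ldots,n\}$ with an irreducible finite chain approximating $P$, take its stationary distribution $\pi^{(n)}$ normalized by $\pi^{(n)}(x_0)=1$, and extract a pointwise subsequential limit by a diagonal argument; because each instance of \eqref{eq:master} couples only the finitely many states reachable in one jump, the limit automatically satisfies \eqref{eq:master} and is non-zero at $x_0$. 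The delicate point on that route — and the reason the cited result is cleaner — is proving that the coordinate sequences $\{\pi^{(n)}(x)\}_n$ remain bounded, so that a nontrivial limit exists; this is again exactly where transience must be handled with care.
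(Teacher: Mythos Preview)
Your proposal is correct and follows essentially the same route as the paper: the paper's proof simply cites \cite[Corollary]{H57} together with \cite[Theorem~3.5.1]{markov}, noting that the set of states with non-zero transition rate \emph{into} any given $x\in\N_0$ is finite (indeed $\le\#\Omega$), which is precisely the hypothesis of Harris's result that you identify. Your write-up is more explicit about the embedded-chain reduction and the recurrent/transient case split, but the substance is the same; the threshold structure from ({\bf A2}) is not actually needed for the Harris hypothesis, only the finiteness of $\Omega$ is.
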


\begin{proof}
It follows from  \cite[Corollary]{H57}  and \cite[Theorem3.5.1]{markov},   noting that the set of states \red{ with non-zero transition rates to any given state $x\in \N_0$}    is finite, in fact $\le \#\Omega$. 
\end{proof}

\begin{lemma}\label{lem:nonzero}
Let $\pi$ be a non-zero   invariant measure of $(\Omega,\cF)$ on $\N_0$ such that $\pi(x)\ge0$ for all $x\in\N_0$. Then, $\pi(x)>0$ for all $x\in\N_0$. In fact, $\pi$ is a non-zero stationary measure of $(\Omega,\cF)$.
\end{lemma}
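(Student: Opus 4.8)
The plan is to exploit irreducibility. Under ({\bf A3}) the state space $\N_0$ is a single PIC, hence a non-singleton closed communicating class, so every state is reachable from every other along a path of positive-rate jumps. I would show that non-negativity, together with the master equation \eqref{eq:master}, forces positivity to propagate forward along any such path.

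First, since $\pi$ is non-zero and non-negative, fix $x_0\in\N_0$ with $\pi(x_0)>0$. The key local step is a one-step propagation claim: if $\pi(z)>0$ and there is a jump $\om\in\Omega$ with $\lambda_\om(z)>0$ and $z+\om=x$, then $\pi(x)>0$. To see this, rewrite \eqref{eq:master} at $x$ as
\[
\pi(x)\sum_{\om'\in\Omega}\lambda_{\om'}(x)=\sum_{\om'\in\Omega}\lambda_{\om'}(x-\om')\,\pi(x-\om').
\]
Every summand on the right is non-negative because $\pi\ge0$ and $\lambda_{\om'}\ge0$; moreover the summand for $\om'=\om$ equals $\lambda_\om(z)\pi(z)>0$, so the right-hand side is strictly positive. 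The coefficient $\sum_{\om'\in\Omega}\lambda_{\om'}(x)$ is finite (as $\Omega$ is finite) and strictly positive, since otherwise $x$ would be absorbing, contradicting that $x$ lies in a non-singleton irreducible class. This is the same observation used in the proof of Theorem \ref{th-1} (``otherwise zero is a trapping state''). Dividing gives $\pi(x)>0$.

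Next I would iterate the claim along a reachability path. Because $\N_0$ is irreducible under ({\bf A3}), any $x\in\N_0$ is reachable from $x_0$: there is a sequence $x^{(0)}=x_0,\ldots,x^{(m)}=x$ with $\lambda_{\om^{(i)}}(x^{(i)})>0$ and $\om^{(i)}=x^{(i+1)}-x^{(i)}\in\Omega$. Starting from $\pi(x_0)>0$ and applying the one-step claim inductively yields $\pi(x^{(i)})>0$ for every $i$, and in particular $\pi(x)>0$. As $x$ was arbitrary, $\pi(x)>0$ for all $x\in\N_0$. Finally, a strictly positive non-zero measure solving \eqref{eq:master} on the closed irreducible component $\N_0$ is, by definition, a stationary measure, which gives the last assertion.

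The argument is essentially routine once irreducibility of $\N_0$ is invoked; the only point requiring genuine care is the strict positivity of the total exit rate $\sum_{\om}\lambda_\om(x)$, so that the division in the one-step claim is legitimate. This is precisely where ({\bf A3})---guaranteeing that $\N_0$ contains no absorbing states---enters, and it is the single place the hypotheses are truly needed.
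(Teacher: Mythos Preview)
Your proof is correct and is essentially the same as the paper's: both rewrite the master equation at a state, use non-negativity of all summands and strict positivity of the total exit rate (from ({\bf A3})), and then invoke irreducibility of $\N_0$. The only cosmetic difference is that the paper argues by contrapositive---assuming $\pi(x)=0$ and propagating zeros \emph{backward} along reachability to conclude $\pi\equiv 0$---whereas you propagate positivity \emph{forward} from a state with $\pi(x_0)>0$; these are the same one-step implication read in opposite directions.
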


\begin{proof}
Assume $\pi(x)=0$. By rewriting the master equation \eqref{eq:master}, we obtain
\begin{align*}
 \pi(x) &= \frac{1}{\sum_{\om\in\Omega}\lambda_\om(x)}\sum_{\om\in\Omega}\lambda_\om(x-\om)\pi(x-\om)=0.
\end{align*}
Let $x$ be reachable from $y\in \N_0$  in $k$ steps. If $k=1$, then it follows from above that $y=x-\om$ for some $\om\in\Omega$ and $\pi(y)=0$. If $k>1$, then $\pi(y)=0$ by induction in the number  of steps. Since $\N_0$ is irreducible by assumption, then any state can be reached from any other state in a finite number of steps, and $\pi(x)=0$ for all $x\in\N_0$. However, this contradicts the measure  is non-zero. Hence, $\pi(x)>0$ for all $x\in\N_0$.
\end{proof}


\begin{theorem}\label{th:posneg}
The   sequences $ \gamma_\sL,\ldots,\gamma_\sU $ form a maximal set of  linearly  independent invariant vectors in $\ell(\R)$.   
Moreover, $\gamma_j$ has positive and negative terms for all $j=\sL,\ldots,\sU$ if and only if $\omega_-<-1$, that is, if and only if $\sL\not=\sU$.  If $\sL=\sU$, then $\gamma_\sL$ has all terms positive. In any case, $\gamma(n)=(\gamma_\sL(n),\ldots,\gamma_\sU(n))\not=0$, for   $n\in\N_0$.
\end{theorem}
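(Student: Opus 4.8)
The plan is to derive every assertion from three facts already in hand: Theorem~\ref{th-1} (read, as the preceding remark allows, for arbitrary real-valued solutions of the master equation, so that the space of invariant vectors is $(\sU-\sL+1)$-dimensional with generator coordinates $\pi(\sL),\ldots,\pi(\sU)$ freely prescribable), Proposition~\ref{prop:exists} (existence of a non-zero stationary measure), and Lemma~\ref{lem:nonzero} (a non-zero non-negative invariant measure is strictly positive). The sign statements will come out of the tension between these last two results and the vanishing generating terms built into the definition \eqref{Eq-1} of $\gamma_j$.

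First I would settle linear independence and maximality. Each $\gamma_j$ is itself an invariant vector, being the sequence produced by Theorem~\ref{th-1} from the generator $(\gamma_j(\sL),\ldots,\gamma_j(\sU))=(\delta_{\sL,j},\ldots,\delta_{\sU,j})$, i.e.\ the $(j-\sL)$-th standard basis vector. Evaluating a putative relation $\sum_{j=\sL}^{\sU} a_j\gamma_j=0$ at the indices $\ell=\sL,\ldots,\sU$ and using $\gamma_j(\ell)=\delta_{\ell,j}$ forces $a_\ell=0$ for every $\ell$, so the $\gamma_j$ are linearly independent; as they number exactly $\sU-\sL+1$, the already-established dimension of the space, they form a maximal linearly independent set.

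Next I would fix, via Proposition~\ref{prop:exists} and Lemma~\ref{lem:nonzero}, a stationary measure $\pi$ with $\pi(x)>0$ for all $x\in\N_0$; this single object drives the rest. For the non-vanishing of $\gamma(n)$, Theorem~\ref{th-1} gives $\pi(n)=\sum_{j=\sL}^{\sU}\pi(j)\gamma_j(n)$, so $\gamma(n)=0$ would force $\pi(n)=0$, contradicting positivity; hence $\gamma(n)\neq0$ for every $n$. For the sign structure I would split on $\sL=\sU$ versus $\sL<\sU$. When $\sL=\sU$ (equivalently $\om_-=-1$), there is one generator, $\pi(\ell)=\pi(\sL)\gamma_\sL(\ell)$, so $\gamma_\sL(\ell)=\pi(\ell)/\pi(\sL)>0$ for all $\ell$, giving the all-positive claim and, in particular, no negative term. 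When $\sL<\sU$ (equivalently $\om_-<-1$), each $\gamma_j$ already carries the positive term $\gamma_j(j)=1$; to exhibit a negative term I argue by contradiction: if $\gamma_j\geq0$ everywhere, then $\gamma_j$ is a non-zero non-negative invariant vector, so Lemma~\ref{lem:nonzero} forces $\gamma_j(x)>0$ for all $x$, yet $\gamma_j(i)=\delta_{i,j}=0$ for any $i\in\{\sL,\ldots,\sU\}\setminus\{j\}$, a set that is non-empty precisely because $\sL<\sU$. This contradiction supplies the required negative term and completes the equivalence.

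The main obstacle is the negative-term direction: there the whole force comes from the dichotomy in Lemma~\ref{lem:nonzero}, namely that a non-negative invariant vector is either identically zero or strictly positive. The point to get right is that once there is more than one generator, the zero generating coordinates of $\gamma_j$ preclude strict positivity, so non-negativity is impossible; conversely, with a single generator no such zero coordinates exist and positivity of a genuine stationary measure transfers directly to $\gamma_\sL$. Everything else is bookkeeping against the definition \eqref{Eq-1} and the dimension count.
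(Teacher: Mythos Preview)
Your proof is correct and follows essentially the same route as the paper's own argument: linear independence via the Kronecker values $\gamma_j(\ell)=\delta_{\ell,j}$ on $\{\sL,\ldots,\sU\}$, maximality from the dimension count in the remark preceding the theorem, the sign dichotomy from Lemma~\ref{lem:nonzero} applied to a hypothetically non-negative $\gamma_j$, and non-vanishing of $\gamma(n)$ from Proposition~\ref{prop:exists}. Your write-up is in fact more explicit than the paper's terse proof, but the ideas and the ingredients invoked are identical.
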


\begin{proof}
The former part follows immediately from Theorem~\ref{th-1}, since $(\gamma_j(\sL),\ldots,\gamma_j(\sU))$ is the $(j-\sL+1)$-th unit vector of  $\R^{\sU-\sL+1}$,  for $j=\sL,\ldots,\sU$, by definition. The latter part follows from Lemma \ref{lem:nonzero} and the fact that $\gamma_j(j)=1$ and  $\gamma_j(\ell) = 0$ for $\ell\in\{\sL,\ldots,\sU\}\setminus\{j\}$.  If $\sL=\sU$, then the linear space 
is one-dimensional and positivity of all $\gamma_\sL(\ell)$, $\ell\in\N_0$, follows from Lemma \ref{lem:nonzero} and the existence of a stationary measure, see  Proposition \ref{prop:exists}. The equivalence between $\omega_-<1$ and $\sL\not=\sU$ follows from \eqref{eq:om=1}. If $\gamma(n)=0$, then $\pi(n)=0$ for any   invariant vector $\pi$, contradicting the existence of a stationary measure.
\end{proof}

\begin{theorem}\label{th:G}
Let $G\subseteq \R^{\sU-\sL+1}$ be the set of generators of stationary measures of $(\Omega,\cF)$ on $\N_0$. Then, $G\subseteq \R_{>0}^{\sU-\sL+1}$ is a positive convex cone. Let $G_0=G\cup\{(0,\ldots,0)\}$ be the set 
including the zero vector, which generates  the null measure. Then, $G_0$ is a closed set. The set $\{v\in G_0\colon  \|v\|_1=1\}$ is closed and convex.
\end{theorem}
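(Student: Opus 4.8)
The plan is to verify the three claims of Theorem~\ref{th:G} in turn, working from the characterization of stationary measures via their generators. Recall that by Theorem~\ref{th-1}, every stationary measure $\pi$ is determined by its generator $(\pi(\sL),\dots,\pi(\sU))$ through the formula $\pi(\ell)=\sum_{j=\sL}^{\sU}\pi(j)\gamma_j(\ell)$, and conversely a generator produces a stationary measure exactly when the resulting sequence is a non-negative (equivalently, by Lemma~\ref{lem:nonzero}, strictly positive) measure. So $G$ is precisely the set of generators $v=(v_\sL,\dots,v_\sU)$ for which $\sum_{j}v_j\gamma_j(\ell)>0$ for all $\ell\in\N_0$.

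First I would establish $G\subseteq\R_{>0}^{\sU-\sL+1}$ and convexity/cone structure simultaneously. For the positivity of the coordinates, note that since $\gamma_j(\ell)=\delta_{\ell,j}$ for $\sL\le\ell\le\sU$, the value of the associated measure at state $j$ is exactly $v_j$; a stationary measure is strictly positive everywhere by Lemma~\ref{lem:nonzero}, so each coordinate $v_j=\pi(j)>0$, giving $G\subseteq\R_{>0}^{\sU-\sL+1}$. For the cone and convexity properties, observe that the map $v\mapsto(\sum_j v_j\gamma_j(\ell))_{\ell\in\N_0}$ is linear in $v$, and the defining condition ``all entries $>0$'' is preserved under multiplication by positive scalars and under addition; hence $G$ is closed under positive scalar multiplication and under sums, which is exactly the statement that $G$ is a positive convex cone.

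Next I would handle closedness of $G_0=G\cup\{0\}$. The natural approach is to show $G_0$ is the intersection of closed half-spaces: for each fixed $\ell\in\N_0$ the linear functional $v\mapsto\sum_j v_j\gamma_j(\ell)$ is continuous, so the set $\{v:\sum_j v_j\gamma_j(\ell)\ge0\}$ is closed, and $G_0$ is contained in the intersection of these. The subtlety is that $G$ itself is defined by strict inequalities, which need not be closed; the inclusion of the zero vector and the replacement of ``$>0$'' by ``$\ge0$'' in the limit is what is being asserted. The key step is therefore: if $v^{(n)}\in G$ and $v^{(n)}\to v$, then the limiting sequence $\sum_j v_j\gamma_j(\ell)\ge0$ for every $\ell$, so by Lemma~\ref{lem:nonzero} the limit is either strictly positive everywhere (hence $v\in G$) or identically zero (hence $v=0$, since the coordinates are the values at $\sL,\dots,\sU$). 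This dichotomy, supplied by Lemma~\ref{lem:nonzero}, is exactly what forces $v\in G_0$ and is the main obstacle I anticipate, since it is where the ``no boundary measures that are non-negative but not strictly positive'' phenomenon is used.

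Finally, the set $\{v\in G_0:\|v\|_1=1\}$ is the intersection of the closed set $G_0$ with the unit $\ell^1$-sphere, hence closed. For convexity, since all $v\in G_0\setminus\{0\}=G$ have strictly positive coordinates, the $\ell^1$-norm restricted to $G$ is simply the \emph{linear} functional $v\mapsto\sum_j v_j$; thus $\{v\in G:\|v\|_1=1\}$ is the intersection of the convex cone $G$ with an affine hyperplane, which is convex. I would just note that $0$ does not lie on the unit sphere, so the set in question equals $\{v\in G:\sum_j v_j=1\}$, and convexity follows from the cone/convexity of $G$ together with the linearity of $\|\cdot\|_1$ on the positive orthant. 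This last part is routine once positivity of coordinates (established in the first step) is in hand.
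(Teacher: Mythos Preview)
Your proposal is correct and follows essentially the same approach as the paper: positivity of coordinates via Lemma~\ref{lem:nonzero}, the cone property from linearity, and closedness of $G_0$ from the dichotomy (again Lemma~\ref{lem:nonzero}) that a non-negative invariant vector is either strictly positive or identically zero. Your treatment of the final claim is in fact more explicit than the paper's, which simply declares it ``immediate from the previous''; your observation that $\|\cdot\|_1$ is linear on the positive orthant, so the normalized slice is the intersection of a convex cone with an affine hyperplane, makes the convexity transparent.
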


\begin{proof}
Positivity follows from Lemma \ref{lem:nonzero}. It is straightforward to see that $G$ is a positive convex cone. 
Assume $v^{(m)}\in G_0$, $m\in\N_0$, and $v^{(m)}\to v=(v_\sL,\ldots,v_\sU)\not\in G_0$ as $m\to\infty$. Using Lemma \ref{lem:nonzero}, there exists $\ell\in\N_0$, such that $\sum_{j=\sL}^\sU v_j\gamma_j(\ell)<0$.  Then, there also exists $m\in\N_0$ such that $\sum_{j=\sL}^\sU v^{(m)}_j\gamma_j(\ell)<0$ with $v^{(m)}=(v^{(m)}_\sL,\ldots,v^{(m)}_\sU)$, contradicting that $v^{(m)}\in G_0$. Hence, $G_0$ is closed. The last statement is immediate from the previous.
\end{proof}

\red{Part of Theorem \ref{th:G} might be found in \cite[Theorem~1.4.6]{positivepart}.}  In general, we do not have uniqueness of a stationary measure of $(\Omega,\cF)$ on $\N_0$, unless in the case of recurrent CTMCs. 
For downwardly skip-free processes, we have $\sL=\sU=0$, hence the space of signed invariant measures is one-dimensional and uniqueness  follows, that is, there does not exist a proper signed invariant measure  taking values with both signs.   

We end the section with a few results on uniqueness of stationary measures. For this we need   the following lemma.

\begin{lemma}\label{lem:auxresult}
Let $v=(v_\sL,\ldots,v_\sU)\in\R_{\ge0}^{\sU-\sL+1}$   be a non-zero generator and assume $\om_+=1$. Let
$$\nu(\ell)=\sum_{j=\sL}^{\sU}v_j\gamma_j(\ell),\quad \ell\in\N_0.$$
If,   either
\begin{equation}\label{eq:c1}
\nu(\ell)\ge 0,\quad \ell=n-(\sU-\sL),\ldots,n, 
\end{equation}
for some $n\ge\sU-\sL$, or
\begin{equation}\label{eq:c2}
\nu(\ell)= 0,\quad \ell=n-(\sU-\sL-1),\ldots,n, 
\end{equation}
for some $n\ge\sU-\sL-1$, then
\begin{equation}\label{eq:c3}
\nu(\ell)\ge 0,\quad \ell=0,\ldots,n.
\end{equation}
\end{lemma}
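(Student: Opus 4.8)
The plan is to read the stationarity of $\nu$ through the flux--balance (mass--star) form of Proposition~\ref{pro-2} and to solve it for a single \emph{forward} term. Since $\om_+=1$, the set $\Omega_+=\{1\}$ and $\lambda_1=\lambda_{\om_+}$ is the unique positive transition function; correspondingly, equation \eqref{eq:mstar2original} has exactly one summand with a positive coefficient, namely the $k=m_*$ term, whose coefficient is $c_{m_*}(\cdot)=\lambda_1(\cdot)$, while by \eqref{eq:c} and the sign analysis behind Lemma~\ref{lem:ck} all remaining $c_k$, $0\le k\le m_*-1$, are non-positive. Writing $n'=\ell-m_*$ and isolating this term I would obtain, for every $n'\ge 0$,
\[
\nu(n')\,\lambda_1(n')=\sum_{i=1}^{m_*}\nu(n'+i)\,\bigl|c_{m_*-i}(n'+i)\bigr|,\qquad(\star)
\]
a relation in which \emph{every} coefficient is non-negative. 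Two facts are recorded for repeated use: first, ({\bf A3}) places the PIC at $s=0$, and since the smallest PIC index equals $\max\{\si_+,\so_-\}$ in Proposition~\ref{pro-1old}, this forces $\si_+=0$, so $\lambda_1(x)>0$ for all $x\in\N_0$ and division by $\lambda_1(n')$ is always legitimate; second, the top coefficient $\bigl|c_0(n'+m_*)\bigr|=\lambda_{\om_-}(n'+m_*)$ vanishes exactly when $n'+m_*<\si_{\om_-}=\sU+1$, i.e.\ when $n'\le\sL-1$.

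With $(\star)$ in hand, \eqref{eq:c1} is a clean backward induction. Starting from the given non-negative window $\nu(n-(\sU-\sL)),\dots,\nu(n)$ of length $\sU-\sL+1=m_*$, I would apply $(\star)$ at $n'=n-m_*,n-m_*-1,\dots,0$; at each step $\nu(n')$ is a non-negative combination of the $m_*$ already-established non-negative values $\nu(n'+1),\dots,\nu(n'+m_*)$, hence $\nu(n')\ge 0$. This uses only that $\nu$ is an invariant vector, not the sign of $v$, and I will reuse it (for $-\nu$) below as a backward-induction principle.

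The case \eqref{eq:c2} is the genuine difficulty, since the hypothesis now provides only $\sU-\sL=m_*-1$ consecutive values (all zero), one short of what $(\star)$ needs. I would split it. If $n\le\sU$, the conclusion concerns only $\{0,\dots,\sU\}$, and there I would prove $\nu\ge 0$ directly from $v\ge 0$: for $n'\le\sL-1$ the top coefficient of $(\star)$ drops out, leaving $(\star)$ in terms of $\nu(n'+1),\dots,\nu(n'+m_*-1)$ only, so a downward sweep anchored at the generating window $\nu(\sL),\dots,\nu(\sU)=v_\sL,\dots,v_\sU\ge 0$ gives $\nu\ge 0$ on $\{0,\dots,\sU\}$. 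If instead $n>\sU$, I would adjoin to the block of zeros the single value $a:=\nu(n-m_*+1)$ and determine its sign. Were $a<0$, then $-\nu$ would have a non-negative window $-\nu(n-m_*+1),\dots,-\nu(n)$, so the backward-induction principle applied to $-\nu$ yields $-\nu\ge 0$ on $\{0,\dots,n\}\supseteq\{\sL,\dots,\sU\}$, whence $v=(\nu(\sL),\dots,\nu(\sU))\le 0$; combined with $v\ge 0$ this forces $v=0$, contradicting that $v$ is non-zero. Hence $a\ge 0$, the window $\nu(n-m_*+1),\dots,\nu(n)$ is non-negative, and \eqref{eq:c1} completes the proof.

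The main obstacle is precisely the step just sketched: because $(\star)$ is a recurrence of order $m_*$, a window of $m_*-1$ values carries one constraint too few, so the sign of $\nu(n-m_*+1)$ cannot be settled by any local computation — indeed the local relations linking this value to the data are tautological. The resolution is the \emph{global} hypothesis $v\ge 0$, exploited through the sign-reversal/contradiction argument rather than through the recurrence; the remaining bookkeeping (that $\si_+=0$, that the leading coefficient of $(\star)$ vanishes below $\sU+1$, and that the various index windows lie in $\N_0$) is routine once ({\bf A1})--({\bf A3}) are invoked.
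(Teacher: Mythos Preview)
Your proof is correct and follows essentially the same route as the paper's: both rewrite the flux--balance identity of Proposition~\ref{pro-2} so as to isolate the single forward term (using $\om_+=1$, $\si_+=0$, and the sign facts of Lemma~\ref{lem:ck}), run a backward induction for \eqref{eq:c1}, and for \eqref{eq:c2} enlarge the zero window by one entry and rule out the negative branch via contradiction with $v\ge 0$ on $\{\sL,\dots,\sU\}$. Your explicit treatment of the subcase $n\le\sU$ and the direct sign argument on $a=\nu(n-m_*+1)$ are minor streamlinings of the paper's version, not a different approach.
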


\begin{proof} 
Since $\om_+=1$, then $m_*=\om_+-\om_--1=-\om_-=\sU-\sL+1$. We use $-\om_-$ rather than $\sU-\sL+1$ in the proof for convenience. From Lemma \ref{lem:ck}, we have $c_k(\ell)\le 0$ for $\ell\in\N_0$ and $0\le k<-\om_-$, and $c_{-\om_-}(\ell)> 0$ for $\ell\in\N_0$. The latter follows from $\Omega_+=\{\om_+\}$ and $\si_{\om_+}=0$ in this case. Furthermore,  since $\nu$ defines an invariant  vector, then  from \eqref{eq:mstar2original}, 
\begin{align}\label{eq:piLn}
0&=\sum_{k=0}^{-\om_--1} \nu(n-k)c_k(n-k) +\nu(n+\om_-)c_{-\om_-}(n+\om_-).
\end{align}

 Assume \eqref{eq:c1} holds.  If $n=\sU-\sL$, then there is nothing to show. Hence, assume $n\ge \sU-\sL+1$. By assumption the sum in \eqref{eq:piLn} is non-positive, while the last term is non-negative and $c_{-\om_-}(n+\om_-+1)> 0$ as $n+\om_-=n-(\sU-\sL+1)\ge0$.  Hence, $\nu(n+\om_-)\ge 0$.  Continue recursively for $n:=n-1$ until $n+\om_-=0$. Note that $\nu(\ell)=v_\ell\ge0$ for $\ell=\sL,\ldots,\sU$, in agreement with the conclusion.

Assume \eqref{eq:c2} holds. If $n=\sU-\sL-1$, then there is nothing to prove.
 For $n\ge\sU-\sL$, we aim to show \eqref{eq:c1} holds from which the conclusion follows.  By simplification of \eqref{eq:piLn},
\begin{align*}
-\nu(n-(\sU-\sL))c_{\sU-\sL}(n-(\sU-\sL))&=\nu(n+\om_-)c_{-\om_-}(n+\om_-)
\end{align*}
If $c_{\sU-\sL}(n-(\sU-\sL))<0$, then either   $\nu(n-(\sU-\sL))$ and $\nu(n+\om_-)$ take the same sign or are zero. Consequently, a) $\nu(\ell)\ge 0$ for all $\ell=n+\om_-,\ldots,n$, or b) $\nu(\ell)\le 0$ for all $\ell=n+\om_-,\ldots,n$. Similarly, if $c_{\sU-\sL}(n-(\sU-\sL))=0$, then $\nu(n+\om_-)=0$. Consequently, a) or b) holds in this case too. If a), then \eqref{eq:c3} holds. If b), then \eqref{eq:c3} holds with reverse inequality by applying an argument similar  to the non-negative case.  However, $\nu(\ell)=v_\ell\ge0$, $\ell=\sL,\ldots,\sU$, and at least one of them is strictly larger than zero, by assumption. Hence, the negative sign cannot apply and the claim holds.
\end{proof}

For $n\ge\sU$, define the $(\sU-\sL+1)\times(\sU-\sL+1)$ matrix by
\begin{equation*}
A(n) = \begin{pmatrix}
\frac{\gamma(n-(\sU-\sL-1))^T}{\|\gamma(n-(\sU-\sL-1))\|_1 } \smallskip \\
\vdots \smallskip \\
\frac{\gamma(n-1)^T}{\|\gamma(n-1)\|_1 } \smallskip \\
\frac{\gamma(n)^T}{\|\gamma(n )\|_1 } \smallskip \\
{\bf 1}^T
\end{pmatrix},
\end{equation*}
where $\gamma(\ell)=(\gamma_\sL(\ell),\ldots,\gamma_\sU( \ell))$, ${\bf 1}=(1,\ldots,1)$ and $^T$ denotes transpose.  This matrix is well-defined by Theorem~\ref{th:posneg}. The rows of  $A(n)$, except the last one, has $1$-norm one, and all entries are between $-1$ and $1$. 

\begin{theorem}\label{th:upward}
\red{Assume  there exists a strictly increasing subsequence $(n_k)_{k\in\N_0}$, such that   $A(n_k)\to A$ as $k\to\infty$, with $\det(A)\not=0$. }

 1) There is at most one stationary measure  
 of $(\Omega,\cF)$ on $\N_0$, say $\pi$, with the property 
\begin{equation}\label{eq:lim2}
\lim_{k\to\infty} \frac{\pi(n_k)}{\lVert \gamma(n_k)\rVert_1}= 0.
\end{equation}
 
2) If $\om_+=1$ and  $\pi$ is a unique  
stationary measure  of $(\Omega,\cF)$ on $\N_0$, then \eqref{eq:lim2} holds.
\end{theorem}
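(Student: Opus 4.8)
The starting point is the representation from Theorem~\ref{th-1} and Theorem~\ref{th:G}: every stationary measure $\pi$ has the form $\pi(\ell)=\langle v,\gamma(\ell)\rangle$ for its generator $v=(\pi(\sL),\dots,\pi(\sU))$, which lies in the positive cone $G\subseteq\R_{>0}^{\,\sU-\sL+1}$, so that $\langle{\bf 1},v\rangle=\|v\|_1$. Writing $d=\sU-\sL+1$, the product $A(n)v$ therefore collects the normalised values $\pi(\ell)/\|\gamma(\ell)\|_1$ for $\ell=n-(d-2),\dots,n$ in its first $d-1$ coordinates and $\|v\|_1$ in its last. Passing to the subsequence and using $A(n_k)\to A$ gives $A(n_k)v\to Av$, whose coordinates are the limits $\beta^{(r)}=\lim_k \pi(n_k-(d-1-r))/\|\gamma(n_k-(d-1-r))\|_1\ge 0$ for $r=1,\dots,d-1$, together with $\|v\|_1$. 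In this language \eqref{eq:lim2} says precisely that the coordinate $\beta^{(d-1)}$, corresponding to $\ell=n_k$, vanishes.

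For Part~1, suppose $\pi_1,\pi_2$ are stationary measures both satisfying \eqref{eq:lim2}, with generators normalised to $\|v_1\|_1=\|v_2\|_1=1$. I would aim to show $Av_i=e$, where $e=(0,\dots,0,1)^T$ is the last standard basis vector; since $\det A\neq0$ this yields $v_i=A^{-1}e$, hence $v_1=v_2$ and $\pi_1\propto\pi_2$. The two conditions $\langle{\bf 1},v_i\rangle=1$ and $\beta_i^{(d-1)}=0$ already fix the last two coordinates of $Av_i$; when $\om_-=-2$ (so $d=2$) there are no others and the conclusion is immediate. For $d>2$ the remaining limits $\beta_i^{(r)}$, $1\le r\le d-2$, must also be shown to vanish, and this is the main obstacle: I would feed the flux recursion \eqref{eq:mstar2original} and the sign pattern of the $c_k$ from Lemma~\ref{lem:ck}, together with $\beta_i^{(r)}\ge 0$ and the independence of the limiting rows encoded in $\det A\neq 0$, to propagate the single vanishing at $n_k$ across the whole window $n_k-(d-2),\dots,n_k$.

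For Part~2, assume $\om_+=1$ and that $\pi$ is the unique stationary measure; I would exhibit a measure satisfying \eqref{eq:lim2} and then invoke uniqueness. Since $\det A(n_k)\to\det A\neq0$, the matrix $A(n_k)$ is invertible for large $k$; set $v^{(k)}=A(n_k)^{-1}e$, so $v^{(k)}\to A^{-1}e=:v_0$, the invariant vector $\nu^{(k)}=\langle v^{(k)},\gamma(\cdot)\rangle$ vanishes at the $d-1=\sU-\sL$ consecutive states $n_k-(d-2),\dots,n_k$, and $\langle{\bf 1},v^{(k)}\rangle=1$. Because $\om_+=1$, this is exactly condition \eqref{eq:c2}, and the argument of Lemma~\ref{lem:auxresult}, whose hypothesis that the generator be nonnegative is used there only to choose between the two alternatives, shows that on $\{0,\dots,n_k\}$ the vector $\nu^{(k)}$ is either everywhere $\ge 0$ or everywhere $\le 0$. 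The alternative ``$\le 0$'' can hold for only finitely many $k$: otherwise a limit would produce a nonpositive, nonzero invariant vector whose negative is, by Lemma~\ref{lem:nonzero}, a stationary measure with generator $-v_0$, contradicting $\langle{\bf 1},-v_0\rangle=-1<0$ as generators lie in $\R_{>0}^{\,d}$. Hence $\nu^{(k)}\ge 0$ on $\{0,\dots,n_k\}$ for all large $k$, and since $\nu^{(k)}(\ell)\to\nu_0(\ell)$ for each fixed $\ell$ we get $\nu_0\ge 0$; it is nonzero because $\langle{\bf 1},v_0\rangle=1$, so by Lemma~\ref{lem:nonzero} it is a stationary measure. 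By construction $Av_0=e$, so $\nu_0$ satisfies \eqref{eq:lim2}; uniqueness forces $\nu_0\propto\pi$, and therefore $\pi$ satisfies \eqref{eq:lim2}.

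I expect the genuinely hard step to be the window-propagation in Part~1 for $d>2$: a single scalar limit must be amplified through the recursion into the full set of $d-1$ vanishing window limits needed before $A$ can be inverted, and this is the only place where the strength of $\det A\neq0$ beyond its last two rows is used. In Part~2 the delicate points are milder — ensuring the sign dichotomy is eventually constant (handled by the generator-sum sign above) and that $v_0$ is nonzero so that Lemma~\ref{lem:nonzero} applies.
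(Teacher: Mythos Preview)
Your overall architecture matches the paper's: in Part~1 show $Av=e$ and invert, in Part~2 solve $A(n_k)v^{(k)}=e$, pass to the limit, and use uniqueness to identify the limiting measure with $\pi$.

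\textbf{Part 1.} You correctly isolate the window-propagation for $d=\sU-\sL+1>2$ as the crux. In fact the paper does \emph{not} carry this out either: it simply writes
\[
A(n_k)\pi_*=\begin{pmatrix}\sigma(n_k)\\1\end{pmatrix}\to A\pi_*=\begin{pmatrix}{\bf 0}\\1\end{pmatrix},
\]
i.e., it asserts that all $d-1$ window entries $\pi(n_k-j)/\|\gamma(n_k-j)\|_1$, $j=0,\dots,d-2$, vanish in the limit, whereas \eqref{eq:lim2} supplies only the entry $j=0$. For $d=2$ (that is, $\om_-=-2$) there is nothing more to do and both arguments are complete; for $d>2$ neither you nor the paper actually proves the remaining $d-2$ limits are zero. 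Your diagnosis of the difficulty is accurate, but your proposed mechanism (``feed the flux recursion and the sign pattern of the $c_k$\dots'') is not carried out, so at this point your Part~1 is a sketch at the same level of completeness as the paper's.

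\textbf{Part 2.} Here there is a small but genuine difference. The paper avoids your dichotomy argument entirely by invoking Proposition~\ref{pro-1}: when $\om_+=1$, that proposition shows $M_n$ is a singleton, so the solution $v^{(n)}$ of $A(n)v=e$ lies in $\R_{\ge0}^{\,d}$ with $\|v^{(n)}\|_1=1$ for every $n\ge\sU$. Lemma~\ref{lem:auxresult} then applies directly (its hypothesis $v\ge0$ is met), giving $\nu^{(k)}\ge0$ on $\{0,\dots,n_k\}$ without further work. Your route---extracting the sign dichotomy from the proof of Lemma~\ref{lem:auxresult} and eliminating the ``$\le0$'' branch via $\langle{\bf 1},v_0\rangle=1$---is correct (the nonnegativity of the generator in that lemma is indeed used only to select between the two alternatives), but it re-derives by hand what Proposition~\ref{pro-1} already gives. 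The paper's route is shorter; yours has the minor virtue of needing only invertibility of $A(n_k)$ for large $k$ rather than the full singleton structure of $M_n$.
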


\begin{proof}
1) Let 
$$\sigma(n)=\left(\frac{\pi(n-(\sU-\sL-1))}{\|\gamma(n-(\sU-\sL-1))\|_1},\ldots,\frac{\pi(n)}{\|\gamma(n)\|_1}\right),$$
 and let $\pi_*=(\pi(\sL),\ldots,\pi(\sU))$ be the generator of the stationary measure $\pi$. We have
$$A(n_k) \pi_*=\begin{pmatrix} \sigma(n_k) \\ 1\end{pmatrix}\to A\pi_*=\begin{pmatrix} {\bf 0} \\ 1\end{pmatrix},$$
as $k\to\infty$,  where ${\bf 0}$ is the zero vector of length $\sU-\sL$. Since $A$ is invertible, then 
$$\pi_*=A^{-1}\begin{pmatrix} {\bf 0} \\ 1\end{pmatrix}.$$
 Consequently, as this holds for any stationary measure with the property \eqref{eq:lim2}, then $\pi$ is unique, up to a scalar.

2) According to Proposition \ref{pro-1} below, $A(n)$ is invertible and there is a unique non-negative  (component-wise) solution to 
$$A(n)v^{(n)}=\begin{pmatrix} {\bf 0} \\ 1\end{pmatrix},$$
 for all $n\ge \sU$.  It follows that
$$A v^{n_k} =(A-A(n_k))v^{(n_k)} +A(n_k)v^{n_k} =(A-A(n_k))v^{(n_k)}+\begin{pmatrix} {\bf 0} \\ 1\end{pmatrix}  \to \begin{pmatrix} {\bf 0} \\ 1\end{pmatrix}$$
as $k\to\infty$, since $\|v^{(n_k)}\|_1=1$. Define 
$$v=A^{-1}\begin{pmatrix} {\bf 0} \\ 1\end{pmatrix}, $$
then $v$ is non-negative and $\|v\|_1=1$, since $v^{(n_k)}$ is non-negative,    
$\|v^{(n_k)}\|_1=1$ and $v^{(n_k)}\to v$ as $k\to\infty$. We aim to show that $v$ is the generator of the unique stationary measure $\pi$. 

Define an invariant vector $\nu^k$, for each $k\in\N_0$, by
 $$\nu^k(\ell)=\sum_{j=\sL}^\sU v_j^{(n_k)} \gamma_j(\ell),\quad \ell\in\N_0.$$
 We have $\nu^k(\ell)=0$ for all $\ell=n_k-(\sU-\sL-1),\ldots,n_k$. Hence, by Lemma \ref{lem:auxresult}, $\nu^k(\ell)\ge0$ for    $\ell=0,\ldots,n_k$. Fix $\ell\in\N_0$. Then for all large $k$ such that $n_k\ge \ell$, we have $\nu^k(\ell)\ge0$ and
$$\nu^k(\ell)\to \nu(\ell)=\sum_{j=\sL}^{\sU}v_j\gamma_j(\ell),\quad\text{for}\quad k\to\infty.$$
  Hence, $\nu(\ell)\ge 0$ for all $\ell\in\N_0$. Consequently, using Lemma \ref{lem:nonzero}, $\nu$ is a stationary measure and by uniqueness, it holds  that $\nu=\pi$, up to a scaling constant.
\end{proof}

To state the next result, we introduce some notation.  Let 
$$I_j^+=\{n\in\N_0\colon \gamma_j(n)>0\},\quad I_j^-=\{n\in\N_0\colon \gamma_j(n)<0\},\quad j=\sL,\ldots,\sU.$$
If $\sL=\sU$, then $I^+_\sL=\N_0$ and $I^-_\sL=\emptyset$. If $\sL\not=\sU$, then by the definition of $\gamma_j$, $I^+_j\not=\emptyset$ and $I^-_j\not=\emptyset$ (Theorem \ref{th:posneg}).  In general, it follows from  Theorem~\ref{th-1}, Proposition~\ref{prop:exists}, and Lemma~\ref{lem:nonzero}, that  $I_j^-\subseteq \cup_{i\not=j} I_i^+$,  $\cup_{i=\sL}^\sU I_i^+=\N_0$ and $\cap_{i=\sL}^\sU I_i^-=\emptyset$.
In particular, for $\sU-\sL=1$, that is, $\omega_-=-2$, then  $I_\sU^-\subseteq I_\sL^+$ and $I_\sL^-\subseteq I_\sU^+$. Below, in the proof of Lemma \ref{le:stationary-measure}, we show these four sets are infinite. Hence, we may define
\begin{equation}\label{eq:infsup}
r_1=-\limsup_{n\in I_\sL^- } \frac{\gamma_\sL(n)}{\gamma_\sU(n)},\quad r_2=-\liminf_{n\in I_\sU^-}\frac{\gamma_\sL(n)}{\gamma_\sU(n)}.
\end{equation}

\begin{lemma}\label{le:stationary-measure} 
Assume $\omega_-=-2$, that is, $\sU=\sL+1$. It holds that $0< r_1\le r_2<\infty$. A non-zero measure $\pi$ is  a   stationary measure if and only if
\begin{equation}\label{range}
\frac{\pi(\sU)}{\pi(\sL)}\in[r_1,r_2]. 
\end{equation}

Furthermore, a  stationary measure $\pi$ is unique   if and only if $r_1 = r_2$, if and only if
\begin{equation}\label{eq:lim}
\lim_{n\in I_\sL^- \cup I_\sU^-} \frac{\pi(n)}{\lVert \gamma(n)\rVert_1}= 0.
\end{equation}
If this is the case, then limes superior and limes inferior in \eqref{eq:infsup} may be replaced by limes.
\end{lemma}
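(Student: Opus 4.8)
The plan is to use that $\omega_-=-2$ makes the space of invariant vectors two-dimensional (Theorem~\ref{th:posneg}), so every invariant vector is $\nu=a\gamma_\sL+b\gamma_\sU$ and, by Lemma~\ref{lem:nonzero}, is a stationary measure exactly when it is nonzero and non-negative at every state; a stationary measure is then strictly positive, so I may scale $a=\pi(\sL)>0$ and work with the single ratio $t=\pi(\sU)/\pi(\sL)=b/a>0$. State by state, non-negativity of $\nu_t=\gamma_\sL+t\gamma_\sU$ becomes a one-sided bound on $t$: dividing $\gamma_\sL(n)+t\gamma_\sU(n)\ge0$ by $\gamma_\sU(n)$ gives, for $n\in I_\sL^-$ (where $\gamma_\sU(n)>0$), the lower bound $t\ge -\gamma_\sL(n)/\gamma_\sU(n)$, and for $n\in I_\sU^-$ (where $\gamma_\sU(n)<0$) the upper bound $t\le -\gamma_\sL(n)/\gamma_\sU(n)$; all other states impose nothing. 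First I would check the four sets are infinite. If $I_\sL^-$ were finite then $\gamma_\sL\ge 0$ on a tail; since the generator $(1,0)$ is non-negative, Lemma~\ref{lem:auxresult} propagates this backward to give $\gamma_\sL\ge 0$ everywhere, and Lemma~\ref{lem:nonzero} then forces $\gamma_\sL>0$ everywhere, contradicting $\gamma_\sL(\sU)=0$. The symmetric argument with $\gamma_\sU(\sL)=0$ handles $I_\sU^-$, and the inclusions $I_\sL^-\subseteq I_\sU^+$, $I_\sU^-\subseteq I_\sL^+$ make $I_\sU^+,I_\sL^+$ infinite too.

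Intersecting all one-sided bounds, the admissible ratios form an interval $[r_1,r_2]$ whose left endpoint is the supremum of the lower bounds over $I_\sL^-$ and whose right endpoint is the infimum of the upper bounds over $I_\sU^-$; that these endpoints govern feasibility is exactly \eqref{range}. Existence of a stationary measure (Proposition~\ref{prop:exists}) supplies a feasible $t^\ast$, so $r_1\le t^\ast\le r_2$; each lower bound $-\gamma_\sL(n)/\gamma_\sU(n)$ is positive and each upper bound finite, so $0<r_1\le r_2<\infty$, and closedness of the interval comes from Theorem~\ref{th:G}.

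To recast the endpoints as the one-sided limits in \eqref{eq:infsup} I would then show the extremal bounds are attained asymptotically, i.e.\ the supremum over $I_\sL^-$ equals the corresponding tail limit (and dually for $r_2$). This is where backward propagation is essential: were the tail limit some $r_1-\delta<r_1$, then $t=r_1-\delta/2$ would satisfy every one-sided bound for all large $n$, hence $\nu_t\ge0$ on a tail, and Lemma~\ref{lem:auxresult} would give $\nu_t\ge0$ everywhere, contradicting that $r_1$ is the least feasible ratio; this identifies the endpoints with the $\limsup,\liminf$ of \eqref{eq:infsup}. Uniqueness is then equivalent to $[r_1,r_2]$ collapsing to a point, i.e.\ $r_1=r_2$: if $r_1<r_2$ the ratios $t=r_1$ and $t=r_2$ produce two linearly independent stationary measures, whereas $r_1=r_2$ pins the generator down up to scale.

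For the equivalence with \eqref{eq:lim}, for the candidate $\pi=\gamma_\sL+t\gamma_\sU$ I would use $\lVert\gamma(n)\rVert_1=|\gamma_\sL(n)|+|\gamma_\sU(n)|$ to compute that $\pi(n)/\lVert\gamma(n)\rVert_1$ equals $(t-u_n)/(1+u_n)$ on $I_\sL^-$ and $(w_n-t)/(1+w_n)$ on $I_\sU^-$, writing $u_n=w_n=-\gamma_\sL(n)/\gamma_\sU(n)$ on the respective sets; hence \eqref{eq:lim} holds iff $u_n\to t$ along $I_\sL^-$ and $w_n\to t$ along $I_\sU^-$, and the final clause about replacing $\limsup,\liminf$ by limits follows at once. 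The main obstacle is precisely this last step: I must show that uniqueness ($r_1=r_2$) upgrades the \emph{one-sided} extremal identities into genuine convergence of the full sequences $u_n,w_n$, ruling out a subsequence of $I_\sL^-$ along which $u_n$ stays bounded away from $t$. I expect to obtain this either from the monotonicity of the even and odd continued-fraction convergents controlling these ratios in the $\omega_-=-2$ regime (as in Lemma~\ref{lem:hrec} and Corollary~\ref{cor:fraction}), or by applying Theorem~\ref{th:upward} along the subsequence $I_\sL^-\cup I_\sU^-$, exploiting that the binding states interlace so that the squeeze $u_n<t<w_m$ forces convergence.
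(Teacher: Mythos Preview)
Your overall framework---parametrising invariant vectors by the ratio $t=\pi(\sU)/\pi(\sL)$, reading off one-sided constraints from the sign of $\gamma_\sU(n)$, and invoking Theorem~\ref{th:G} for closedness of the feasible interval---matches the paper's. The explicit identity $\pi(n)/\lVert\gamma(n)\rVert_1=(t-u_n)/(1+u_n)$ on $I_\sL^-$ is in fact a bit sharper than the paper's version; note it immediately gives $\gamma_\sU(n)/\lVert\gamma(n)\rVert_1=1/(1+u_n)\ge 1/(1+t)>0$, which is exactly the bound $g_1>0$ that the paper isolates as a separate step.

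There is, however, a genuine gap: Lemma~\ref{lem:auxresult}, Lemma~\ref{lem:hrec}, Corollary~\ref{cor:fraction}, and Theorem~\ref{th:upward}(2) all \emph{assume $\omega_+=1$}, whereas the present lemma assumes only $\omega_-=-2$. You invoke Lemma~\ref{lem:auxresult} twice (for infiniteness of $I_\sL^-,I_\sU^-$ and for replacing $\sup$ by $\limsup$), and then propose the continued-fraction machinery or Theorem~\ref{th:upward} to handle your ``main obstacle''; none of these tools is available here. The paper avoids the first two uses by a non-attainability argument that needs only Lemma~\ref{lem:nonzero}: if $\widetilde r_1=\sup_{n\in I_\sL^-}u_n$ were attained at some $n_0$, then the stationary measure with ratio $\widetilde r_1$ (which lies in the feasible interval, hence exists by Proposition~\ref{prop:exists} and Theorem~\ref{th:G}) would have $\pi(n_0)=0$, contradicting strict positivity. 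Non-attainability of a supremum over a nonempty set forces that set to be infinite and yields $\sup=\limsup$ automatically; this replaces your backward-propagation steps without any hypothesis on $\omega_+$.

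For the final equivalence the paper likewise does not appeal to continued fractions or Theorem~\ref{th:upward}. It works directly from the two-sided squeeze on $I_\sL^-$,
\[
\pi(\sL)(g_1-\epsilon)(t-u_n)\;\le\;\frac{\pi(n)}{\lVert\gamma(n)\rVert_1}\;\le\;\pi(\sL)(t-u_n),
\]
together with the analogous estimate on $I_\sU^-$. The lower bound, combined with $g_1>0$ and \eqref{eq:lim}, forces $u_n\to t$ along $I_\sL^-$ (and $w_n\to t$ along $I_\sU^-$), giving both $r_1=r_2$ and the ``$\lim$ in place of $\limsup/\liminf$'' clause; the upper bound is used for the converse. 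Your formula $(t-u_n)/(1+u_n)$ with $0<u_n<t$ encodes exactly the same squeeze, so you can drop the continued-fraction plan entirely and argue along these lines instead.
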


\begin{proof}
Let $\pi$ be a stationary measure, which exists by Proposition~\ref{prop:exists}. Then $\pi(n) = \pi(\sL)\gamma_\sL(n) +\pi(\sU)\gamma_\sU(n)>0$, which implies that 
\[\frac{\pi(\sU)}{\pi(\sL)}>-\frac{\gamma_\sL(n)}{\gamma_\sU(n)},\quad \forall n\in I_\sL^-,\quad\text{and}\quad \frac{\pi(\sU)}{\pi(\sL)}<-\frac{\gamma_\sL(n)}{\gamma_\sU(n)},\quad \forall n\in I_\sU^-.\]
by taking supremum  and infimum, respectively, this further implies    
$$\widetilde r_1=\sup\left\{-\frac{\gamma_\sL(n)}{\gamma_\sU(n)}\colon n\in I_\sL^-\right\}\le \inf\left\{-\frac{\gamma_\sL(n)}{\gamma_\sU(n)}\colon n\in I_\sU^-\right\}=\widetilde r_2,$$ 
and that $\pi(\sU)/\pi(\sL)$ is  in the interval $[\widetilde r_1,\widetilde r_2]$.  
Note that $\widetilde r_2<\infty$ and $\widetilde r_1>0$, since $I_\sU^-$ and $I_\sL^-$ are both non-empty, using Theorem \ref{th:posneg}. For the reverse conclusion, for any   invariant vector $\pi$ such that \eqref{range} holds, we have using Theorem~\ref{th-1},
 $$\pi(n) = \pi(\sL) \gamma_\sL(n) + \pi(\sU) \gamma_\sU(n)\ge0,\quad \forall n\in\N_0,$$
  which implies that $\pi$ is a stationary measure, see Lemma \ref{lem:nonzero}.

Assume that either $\widetilde r_1$ or $\widetilde r_2$ is attainable for some $n\in\N_0$. Then, there exists  a stationary measure $\pi$ such that 
$$\frac{\pi(\sU)}{\pi(\sL)}=-\frac{\gamma_\sL(n)}{\gamma_\sU(n)},$$
that is, $\pi(n) = \pi(\sL)\gamma_\sL(n)+\pi(\sU)\gamma_\sU(n) = 0$, contradicting   the positivity of $\pi$, see  Lemma~\ref{lem:nonzero}.  Hence, $I_\sL^-$ and $I_\sU^-$ both contain infinitely many elements, and since neither $\widetilde r_1$ nor $\widetilde r_2$ are attainable, then $\sup$ and $\inf$ can be replaced by $\limsup$ and $\liminf$, respectively, to obtain $r_1$ and $r_2$ in \eqref{eq:infsup}. Hence, also  $I_\sL^+$ and $I_\sU^+$ are infinite sets, since $I_\sU^-\subseteq I_\sL^+$ and $I_\sL^-\subseteq I_\sU^+$.

For the second part, the bi-implication with $r_1=r_2$ is straightforward. Assume $\pi$ is a stationary measure, such that $\pi(\sL),\pi(\sU)>0$. Note that
$$g_1=\liminf_{n\in I^-_\sL} \frac{\gamma_\sU(n)}{\lVert \gamma(n)\rVert_1}>0,\quad g_2=\liminf_{n\in I^-_\sU} \frac{\gamma_\sL(n)}{\lVert \gamma(n)\rVert_1}>0,$$
as otherwise $\pi(n) = \pi(\sL) \gamma_\sL(n) + \pi(\sU) \gamma_\sU(n)<0$ for some large $n$.
 For $n\in I^-_\sL$,
\begin{align}\label{eq:om2}
\frac{\pi(n)}{\lVert \gamma(n)\rVert_1} &= \pi(\sL)\frac{\gamma_\sL(n)}{\lVert \gamma(n)\rVert_1}+\pi(\sU)\frac{\gamma_\sU(n)}{\lVert \gamma(n)\rVert_1}=\left( \frac{\gamma_\sL(n)}{\gamma_\sU(n)}+\frac{\pi(\sU)}{\pi(\sL)}\right)\frac{\pi(\sL)\gamma_\sU(n)}{\lVert \gamma(n)\rVert_1}\\
&\ge \pi(\sL)(g_1-\epsilon)\left( \frac{\gamma_\sL(n)}{\gamma_\sU(n)}+\frac{\pi(\sU)}{\pi(\sL)}\right)\ge 0, \nonumber
\end{align}
for some small $\epsilon>0$ and large $n$. Similarly, for $n\in I^-_\sU$,
\begin{align}\label{eq:om3}
\frac{\pi(n)}{\lVert \gamma(n)\rVert_1} &= \pi(\sL)\frac{\gamma_\sL(n)}{\lVert \gamma(n)\rVert_1}+\pi(\sU)\frac{\gamma_\sU(n)}{\lVert \gamma(n)\rVert_1}=\left(\frac{\pi(\sL)}{\pi(\sU)}+ \frac{\gamma_\sU(n)}{\gamma_\sL(n)}\right)\frac{\pi(\sU)\gamma_\sL(n)}{\lVert \gamma(n)\rVert_1}\\
&\ge \pi(\sU)(g_2-\epsilon)\left(\frac{\pi(\sL)}{\pi(\sU)}+ \frac{\gamma_\sU(n)}{\gamma_\sL(n)}\right) \ge 0,\nonumber
\end{align}
for $\epsilon>0$ and large $n$. Taking $\limsup$ over $n\in I^-_\sL$ in \eqref{eq:om2} and  $\limsup$ over $n\in I^-_\sU$  in \eqref{eq:om3}, using \eqref{eq:lim}, yields 
$$r_1=-\limsup_{n\in I^-_\sL} \frac{\gamma_\sL(n)}{\gamma_\sU(n)}=\frac{\pi(\sU)}{\pi(\sL)},\quad \frac1{r_2}=-\limsup_{n\in I^-_\sU} \frac{\gamma_\sU(n)}{\gamma_\sL(n)}=\frac{\pi(\sL)}{\pi(\sU)},$$
or with $\limsup$ replaced by $\liminf$. Hence, \eqref{eq:infsup} holds with $\limsup$ and $\liminf$ replaced by $\lim$.
It follows that $r_1=r_2$ and that $\pi$ is unique. For the reverse statement, change the first inequality sign $\ge$ in \eqref{eq:om2} and \eqref{eq:om3} to $\le$, and $(g_1-\epsilon)$ and $(g_2-\epsilon)$ to one, and the conclusion follows.  
\end{proof}

\section{Convex constrained optimization }\label{sec:convex}

When the Markov chain is sufficiently complex,   an analytical expression for a stationary measure may not exist. In fact, this seems to be the rare case. If an analytical expression is not available, one may find estimates of the relative sizes of $\pi(\sL), \ldots \pi(\sU)$, which in turn determines $\pi(\ell)$, $\ell\ge0$, up to a  scaling constant, by Theorem \ref{th-1}. If $\pi$ is a stationary distribution, this constant may then be found numerically by calculating the first $n$ probabilities $\pi(\ell)$, $\ell=0,\ldots,n$, for some $n$, and renormalizing to obtain a proper distribution. Here, we examine how one may proceed in practice.

\begin{theorem}\label{th:scheme1}
Assume $\emph{({\bf A1})}$-$\emph{({\bf A3})}$ and for $n\ge0$, let
\begin{equation}\label{eq:Kn}
K_n=\left\{v\in\R^{\sU-\sL+1}_{\ge0}\colon \sum_{j=\sL}^{\sU}v_j\gamma_j(\ell)\geq 0, \  \ell=0,\ldots,n,\ \lVert v\rVert_1= 1\right\}.
\end{equation}
Then,  $K_n\not=\emptyset$, $K_n\supseteq K_{n+1}$, $n\ge0$, and  
$\cap_{n=1}^{\infty}K_n\subseteq\R^{\sU-\sL+1}_{>0}$ is non-empty and consists of all generators of non-zero stationary measures of $(\Omega,\cF)$ on $\N_0$, appropriately normalized. In fact, $G_0=\cap_{n=1}^{\infty}K_n$ with $G_0$ as in Theorem \ref{th:G}.

Furthermore, there is a unique  minimizer  $v^{(n)}$ to the following constraint quadratic optimization problem,
\begin{equation}\label{Eq-4}
\min_{v\in K_n} \lVert v\rVert_2^2.
\end{equation}
Moreover, the limit  
 $v^*=\lim_{n\to\infty}v^{(n)}\in \cap_{n=1}^{\infty}K_n$ exists and equals 
the generator of a stationary measure $\pi$   of $(\Omega,\mathcal{F})$ on $\N_0$,  that is, $v^*=(\pi(\sL),\ldots, \pi(\sU))$. 
\end{theorem}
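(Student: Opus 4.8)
The plan is to handle the elementary structural claims about the $K_n$ first, then identify the intersection $\cap_n K_n$ with the set of (normalised) generators, and finally treat the optimization and the limit, which is the substantive part.

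First I would record monotonicity and non-emptiness. The inclusion $K_n\supseteq K_{n+1}$ is immediate, since the constraints defining $K_{n+1}$ are exactly those defining $K_n$ together with the one extra inequality $\sum_{j=\sL}^{\sU}v_j\gamma_j(n+1)\ge 0$. For non-emptiness, Proposition~\ref{prop:exists} yields a non-zero stationary measure $\pi$; letting $v$ be its generator $(\pi(\sL),\ldots,\pi(\sU))$ rescaled to $\|v\|_1=1$, we get $v\ge 0$ and $\sum_{j=\sL}^{\sU}v_j\gamma_j(\ell)=c\,\pi(\ell)$ with $c>0$, which is positive for every $\ell$ by Lemma~\ref{lem:nonzero}. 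Hence $v\in K_n$ for all $n$, so each $K_n$ and the intersection $\cap_n K_n$ are non-empty.

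Second, I would identify $\cap_n K_n$. The computation just given shows any normalised generator lies in every $K_n$. Conversely, if $v\in\cap_n K_n$, set $\pi(\ell)=\sum_{j=\sL}^{\sU}v_j\gamma_j(\ell)$; then $\pi\ge 0$ on all of $\N_0$, $\pi$ solves the master equation by Theorem~\ref{th-1}, and $\pi$ is non-zero because $\|v\|_1=1$ forces $\pi(j)=v_j>0$ for some $j\in\{\sL,\ldots,\sU\}$ (using $\gamma_j(j)=1$ and $\gamma_i(j)=0$ for $i\ne j$). By Lemma~\ref{lem:nonzero}, $\pi$ is a stationary measure, so $v$ is its normalised generator. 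This proves $\cap_n K_n$ equals the set of normalised generators, i.e.\ $G_0$ in the normalisation $\|v\|_1=1$, and the positivity $\cap_n K_n\subseteq\R^{\sU-\sL+1}_{>0}$ is then Theorem~\ref{th:G}. For the optimization itself, I would note that on the orthant $v\ge 0$ the constraint $\|v\|_1=1$ reads $\sum_j v_j=1$, so $K_n$ is cut out by finitely many affine (in)equalities and is bounded (each $v_j\in[0,1]$); it is therefore compact and convex, and since $\|v\|_2^2$ is strictly convex the minimiser $v^{(n)}$ exists and is unique.

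Finally, the convergence. Put $m_n=\|v^{(n)}\|_2^2$. Nestedness gives $m_n\le m_{n+1}$, while $\cap_n K_n\subseteq K_n$ gives $m_n\le\min_{v\in\cap_n K_n}\|v\|_2^2=:m_\infty$; hence $m_n\uparrow m'\le m_\infty$. All $v^{(n)}$ lie in the compact set $K_1$, so any subsequential limit $w$ exists, and for each fixed $m$ we have $v^{(n)}\in K_m$ for $n\ge m$ with $K_m$ closed, so $w\in K_m$; thus $w\in\cap_m K_m$ and $\|w\|_2^2=m'$. Then $m'\ge m_\infty\ge m'$ forces $m'=m_\infty$, so $w$ minimises $\|\cdot\|_2^2$ over $\cap_n K_n$, and by strict convexity this minimiser is unique, say $v^*$. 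Since every subsequential limit equals $v^*$ and $(v^{(n)})$ is confined to the compact $K_1$, the whole sequence converges, $v^{(n)}\to v^*$. As $v^*\in\cap_n K_n$ is a normalised generator, writing $\pi$ for the associated stationary measure gives $v^*=(\pi(\sL),\ldots,\pi(\sU))$, because $\pi(j)=\sum_i v^*_i\gamma_i(j)=v^*_j$ for $j=\sL,\ldots,\sU$. The hard part is precisely this last step: upgrading subsequential convergence to convergence of the full sequence while simultaneously placing the limit in $\cap_n K_n$ at the minimal $2$-norm. This requires combining the monotonicity of the minima $m_n$, the closedness of each $K_m$, and strict-convexity uniqueness; the identification of $\cap_n K_n$ with the generators via Lemma~\ref{lem:nonzero} is the other point needing care.
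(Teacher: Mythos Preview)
Your proof is correct and follows essentially the same route as the paper: non-emptiness via Proposition~\ref{prop:exists}, nestedness, compactness and convexity of $K_n$, unique minimiser from strict convexity of $\|\cdot\|_2^2$, and convergence via subsequential limits landing in $\cap_n K_n$. The one noteworthy difference is in the final step: the paper assumes two distinct subsequential limits $v^*,\tilde v^*$, observes $\|v^*\|_2=\|\tilde v^*\|_2$ from monotonicity of $m_n$, and derives a contradiction by considering the midpoint $v_\alpha^*$ and its approximants $v_\alpha^{(k)}=\alpha v^{(n_k)}+(1-\alpha)v^{(m_k)}\in K_{\min\{n_k,m_k\}}$; you instead identify every subsequential limit directly as the unique $\|\cdot\|_2^2$-minimiser over $\cap_n K_n$ via the sandwich $m'\le m_\infty\le m'$. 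Your variant is slightly more direct and has the bonus of pinning down \emph{which} element of $\cap_n K_n$ the limit $v^*$ is (namely the one of least $2$-norm), something the paper does not state explicitly.
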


\begin{proof}
The sets $K_n$ are non-empty by Proposition \ref{prop:exists} and  obviously non-increasing: $K_n\supseteq K_{n+1}$.  Hence, since all $K_n$s have a common element, the intersection $\cap_{n=1}^{\infty}K_n$ is also non-empty. Lemma \ref{lem:nonzero} rules out the intersection  contains boundary elements of $\R^{\sU-\sL+1}_{\ge0}$.

Furthermore, the sets $K_n$ are non-empty, closed and convex. Since $\|\cdot\|_2^2$ is  strictly convex, there exists a unique minimizer $v^{(n)}\in K_n$ for the constrained optimization problem \eqref{Eq-4}  \cite[p137 or Ex 8.1 on p.447]{boyd2004convex}.

Since  the sets $K_n$ are   non-increasing, then $v^{(n)}$ is non-decreasing in $\ell_2$-norm: $\lVert v^{(n)} \rVert_2^2\le \lVert v^{(n+1)} \rVert_2^2$ for all $n\ge1$. Furthermore, since the sets 
$K_n$ are  closed subsets of the simplex and hence compact, any sequence $ v^{(n)}$, $n\ge1$, of minimizers has a converging subsequence $v^{(n_k)}$, $k\ge1$, say  $v^*=\lim_{k\to\infty}v^{(n_k)}\in  \cap_{n=1}^{\infty}K_n$ (the intersection is closed). 
 To show uniqueness, suppose there is another converging subsequence $v^{(m_k)}$,  $k\ge1$, such that $\widetilde v^*=\lim_{k\to\infty}v^{(m_k)}\in  \cap_{n=1}^{\infty}K_n$ and $v^*\not=\widetilde v^*$. Then, it follows that $\lVert v^* \rVert_2^2= \lVert\widetilde v^* \rVert_2^2$, since the norm is non-decreasing along  the full sequence. By convexity of $K_n$, the intersection $\cap_{n=1}^{\infty}K_n$ is convex and $v_\alpha^*=\alpha v^*+(1-\alpha)\widetilde v^*\in \cap_{n=1}^{\infty}K_n$ for $\alpha\in(0,1)$. By strict convexity of the norm  and $v^*\not=\widetilde v^*$, then
\begin{equation}\label{eq:alphaconvex}
\|v_\alpha^* \|_2^2 < \alpha \|v^* \|_2^2 +(1-\alpha)\|\widetilde v^* \|_2^2=\|v^* \|_2^2,\quad \alpha\in(0,1).
\end{equation}
Let $v_\alpha^{(k)}=\alpha v^{(n_k)}+(1-\alpha) v^{(m_k)}$. By convexity and monotonicity of $K_n$, we have  
$$v_\alpha^{(k)}\in K_{\min\{n_k,m_k\}}\quad\text{for}\quad k\ge 1,$$
$$v_\alpha^{(k)}\to v_\alpha^*\in \cap_{n=1}^\infty K_n\quad\text{for}\quad k\to\infty.$$
By assumption, $\|v_\alpha^{(k)}\|_2^2\ge\min\{\|v^{(n_k)}\|_2^2,\|v^{(m_k)}\|_2^2\}$. Hence, $\|v_\alpha^* \|_2^2\ge\| v^*\|_2^2$,   contradicting \eqref{eq:alphaconvex}.  Consequently, $v^*=\widetilde v^*$.

Since $v^*\in\cap_{n=0}^\infty K_n$, then $v^*=(\pi(\sL),\ldots,\pi(\sU))$ for some non-zero stationary measure $\pi$ of $(\Omega,\cF)$ on $\N_0$.
\end{proof}

If the process is downwardly skip-free, then $\sL=\sU$ and $\pi(\sL)$ might be put to one. Consequently, $\pi(\ell)$, $\ell\ge0$, can  be found recursively from \eqref{Eq-3}. Hence, it only makes  sense to apply the optimization scheme for $\sL<\sU$.

The quadratic minimizing function is  chosen out of convenience to identify a unique element of the set $K_n$. Any strictly convex function could be used for this. If there  exists a unique stationary measure, then one might  choose a random element of $K_n$ as any sequence of elements in $K_n$, $n\in \N$, eventually  converges to the unique element of $ \cap_{n=1}^{\infty}K_n$.  If there are more than one stationary measure, different measures might in principle be found by varying the convex function. In the case, two different stationary measures are found, then any linear combination with positive coefficients is also a stationary measure.

 In practice, the convex constrained optimization approach outlined in Theorem \ref{th:scheme1} often fails for (not so) large $n$,   see the example in Section \ref{sec:examples}. This is primarily because   the coefficients $\gamma_j(\ell)$   become exponentially large with alternating signs, and  because numerical evaluation of close to zero probabilities might return close to zero negative values, hence violating the non-negativity constraint of the convex constrained optimization problem. The numerical difficulties in verifying the inequalities are non-negative are most severe for large $n$, in particular  if $\pi(n)$ vanishes for large $n$.  To face the problems mentioned above, we  investigate  an alternative approach  to the optimization problem.

\begin{lemma}\label{lem:scheme3} 
Assume $\emph{({\bf A1})}$-$\emph{({\bf A3})}$. Define the sets $M_n$, $n\ge \sU$, by
\begin{align*}
M_n&=\left\{v\in\R^{\sU-\sL+1}_{\ge0} \colon \sum_{j=\sL}^{\sU}v_j\gamma_j(\ell)= 0\,\,\text{for}\,\,  \ell=n-(\sU-\sL)+1,\ldots,n,\   \lVert v\rVert_1=1\right\}.\end{align*}
If $M_n\not=\emptyset$, then there is a unique  minimizer  $w^{(n)}$ to the following constraint quadratic optimization problem,
\begin{equation*} 
\min_{v\in M_n} \lVert v\rVert_2^2.
\end{equation*}

 Moreover, if   $\om_+=1$, then $M_n$ is a singleton set,  and $M_n\subseteq K_n$  for $n\ge \sU$,   where $K_n$ is as in \eqref{eq:Kn}. Furthermore,  if there exists a unique  stationary measure $\pi$   of $(\Omega,\mathcal{F})$ on $\N_0$,     then
 $w^*=\lim_{n\to\infty}w^{(n)}\in \cap_{n=1}^{\infty}K_n$ exists. 
In particular,  $w^*$  equals the generator of $\pi$,  that is, $w^*=(\pi(\sL),\ldots, \pi(\sU))$, appropriately normalized.  
\end{lemma}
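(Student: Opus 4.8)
The plan is to establish the statement in three movements: first the uniqueness of the minimizer $w^{(n)}$ for fixed $n$ (when $M_n\ne\varnothing$), then the structural facts $M_n\subseteq K_n$ and $M_n$ a singleton in the skip-free case $\om_+=1$, and finally the convergence $w^{(n)}\to w^*$ to the generator of the unique stationary measure $\pi$. The uniqueness of $w^{(n)}$ is the easy part and mirrors Theorem~\ref{th:scheme1} verbatim: $M_n$ is defined by finitely many linear equality constraints together with the affine constraint $\lVert v\rVert_1=1$ and the sign constraints $v\ge0$, so it is a closed convex subset of the simplex; since $\lVert\cdot\rVert_2^2$ is strictly convex, a minimizer over a nonempty compact convex set exists and is unique, exactly as cited from \cite{boyd2004convex}.

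Next I would treat the case $\om_+=1$. Here $\sU-\sL=-\om_--1$, and the defining equations of $M_n$ force $\nu(\ell)=\sum_{j=\sL}^{\sU}v_j\gamma_j(\ell)=0$ for the $\sU-\sL$ consecutive indices $\ell=n-(\sU-\sL)+1,\ldots,n$. This is precisely hypothesis \eqref{eq:c2} of Lemma~\ref{lem:auxresult} (with that lemma's $n$), so the lemma yields $\nu(\ell)\ge0$ for all $\ell=0,\ldots,n$; together with $v\ge0$ and $\lVert v\rVert_1=1$ this shows $v\in K_n$, giving $M_n\subseteq K_n$. To see $M_n$ is a singleton, I would argue that the $\sU-\sL$ homogeneous equations $A(n)$'s top rows impose, combined with $\lVert v\rVert_1=1$, pin down $v$ uniquely: the relevant $(\sU-\sL+1)\times(\sU-\sL+1)$ system is the matrix $A(n)$ from Proposition~\ref{pro-1} (invoked in the proof of Theorem~\ref{th:upward}), which is invertible for $n\ge\sU$, so $A(n)v=({\bf 0},1)^T$ has the unique solution $v^{(n)}=w^{(n)}$; nonnegativity is then automatic from $M_n\subseteq K_n$.

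Finally, for convergence I would use the uniqueness of $\pi$. Since $M_n\subseteq K_n$ and the $K_n$ are nested compact sets with $\cap_n K_n=G_0$ (Theorem~\ref{th:scheme1}), every subsequential limit of $(w^{(n)})$ lies in $\cap_n K_n$, hence is a generator of a stationary measure; by uniqueness of $\pi$ that generator is forced to be $(\pi(\sL),\ldots,\pi(\sU))$ up to the normalization $\lVert\cdot\rVert_1=1$. Because all subsequential limits coincide and the sequence lives in a compact set, the full sequence converges: $w^*=\lim_{n\to\infty}w^{(n)}\in\cap_{n=1}^\infty K_n$ and $w^*$ is the normalized generator of $\pi$, as claimed.

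The main obstacle is the singleton claim and the identification $w^{(n)}=v^{(n)}$ via the invertible matrix $A(n)$: one must be careful that the $\sU-\sL$ equality constraints of $M_n$ are genuinely independent and that invertibility of $A(n)$ (which requires $n\ge\sU$ and the result of Proposition~\ref{pro-1}) really forces uniqueness before the normalization is imposed. The convergence step itself is routine compactness-plus-uniqueness once $M_n\subseteq K_n$ is in hand, but it crucially relies on the hypothesis that the stationary measure is unique, without which $\cap_n K_n$ could contain a whole segment of generators and the limit need not be single-valued.
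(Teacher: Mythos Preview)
Your proposal is correct and follows essentially the same route as the paper: strict convexity for the unique minimizer (as in Theorem~\ref{th:scheme1}), Lemma~\ref{lem:auxresult} for $M_n\subseteq K_n$, Proposition~\ref{pro-1} for the singleton claim, and then nested compactness plus uniqueness of $\pi$ for convergence.

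One small logical slip to clean up: your sentence ``nonnegativity is then automatic from $M_n\subseteq K_n$'' is circular. The inclusion $M_n\subseteq K_n$ only says something about vectors already in $M_n$, i.e.\ vectors already assumed nonnegative; it cannot be used to show that the unique solution of $A(n)v=({\bf 0},1)^T$ lands in the nonnegative orthant. That nonnegativity is precisely the nontrivial content of Proposition~\ref{pro-1} (the cofactor sign argument), which you have already cited---so just attribute both existence/uniqueness \emph{and} nonnegativity of $v^{(n)}$ to Proposition~\ref{pro-1} and drop the appeal to $M_n\subseteq K_n$ at that step.
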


\begin{proof}
Existence of the minimizer follows similarly to the proof of Theorem \ref{th:scheme1}. 
If $\om_+=1$, then it follows from Proposition \ref{pro-1} below that $M_n$  is a singleton set for $n\ge \sU$.   It follows from Lemma \ref{lem:auxresult} that $M_n\subseteq K_n$.
 Since $w^{(n)}\in  M_n\subseteq K_n $, and $\cap_{n=1}^{\infty}K_n$ contains the generator of the unique stationary measure $\pi$ only, then $v^*=\lim_{n\to\infty} w^{(n)}$ exists and equals the generator of $\pi$.
 \end{proof}

We refer to the optimization problem outlined in  Lemma \ref{lem:scheme3} as the linear approximation scheme.  For $\om_+=1$, a solution $v^{(n)}$ to the linear appproximatioin scheme automatically fulfils $\nu(\ell)=\sum_{j=\sL}^\sU v^{(n)}_j \gamma_j(\ell)\ge 0$ for all $\ell=0,\ldots,n$. In general, these inequalities need to be validated.

\begin{proposition}\label{pro-1}
If $\om_+=1$, then $M_n$, $n\ge\sU$, is a singleton set.
\end{proposition}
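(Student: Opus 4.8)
The plan is to identify $M_n$ with the non-negative, $\ell^1$-normalised vectors in the kernel of a linear map, and then to reduce everything to the statement that every non-zero kernel vector is \emph{sign-definite} (all coordinates of one sign). Write $d=\sU-\sL$ and $\gamma(\ell)=(\gamma_\sL(\ell),\ldots,\gamma_\sU(\ell))$. Since $\sU-\sL=-\om_--1$ by \eqref{eq:om=1}, the defining conditions of $M_n$ are exactly the $d$ homogeneous equations $\gamma(\ell)\cdot v=0$ for $\ell=n-d+1,\ldots,n$, together with $v\ge 0$ and $\|v\|_1=1$. Let $\Gamma_n\colon\R^{d+1}\to\R^{d}$ be the linear map with these $d$ rows, so that $M_n=\{v\in\ker\Gamma_n:\ v\ge0,\ \|v\|_1=1\}$. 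As $\Gamma_n$ maps a $(d+1)$-dimensional space into a $d$-dimensional one, $\ker\Gamma_n\neq\{0\}$. (If $\om_-=-1$ then $d=0$, there are no equations, and $M_n=\{1\}$ trivially, so assume $d\ge1$.)

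The key step I would establish is: every non-zero $v_0\in\ker\Gamma_n$ is sign-definite. The sequence $\nu_0:=\sum_{j=\sL}^{\sU}(v_0)_j\gamma_j$ is an invariant vector, being a linear combination of the invariant vectors $\gamma_\sL,\ldots,\gamma_\sU$ of Theorem~\ref{th:posneg}; its generator is $v_0$, since $\gamma_i(j)=\delta_{i,j}$ for $\sL\le i,j\le\sU$ gives $\nu_0(j)=(v_0)_j$ there. By construction $\nu_0(\ell)=0$ for $\ell=n-d+1,\ldots,n$, i.e.\ condition \eqref{eq:c2} holds (here $n\ge\sU=\sL+d\ge d-1$). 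I then invoke the sign-propagation argument in the proof of Lemma~\ref{lem:auxresult}: in the case \eqref{eq:c2} that argument first shows, purely from the recursion and the signs $c_0<0$, $c_{-\om_-}>0$, $c_k\le0$ for $0\le k<-\om_-$ (Lemma~\ref{lem:ck}), that $\nu_0$ is of one sign on $\{0,\ldots,n\}$ — either $\nu_0\ge0$ throughout or $\nu_0\le0$ throughout — and only afterwards uses non-negativity of the generator to discard the second alternative. That first half needs no sign hypothesis on $v_0$, so it applies verbatim here, giving sign-definiteness of $\nu_0$ on $\{0,\ldots,n\}\supseteq\{\sL,\ldots,\sU\}$; hence the generator $v_0=(\nu_0(\sL),\ldots,\nu_0(\sU))$ is sign-definite.

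Granting this, I would finish as follows. For existence, choose $0\neq v_0\in\ker\Gamma_n$; after replacing $v_0$ by $-v_0$ if necessary I may assume $v_0\ge0$, and then $v_0/\|v_0\|_1\in M_n$, so $M_n\neq\varnothing$. For uniqueness, suppose $v,v'\in M_n$ and set $u=v-v'\in\ker\Gamma_n$. If $u\neq0$ it is non-zero and sign-definite, whence $\sum_j u_j\neq0$; but $\sum_j u_j=\|v\|_1-\|v'\|_1=0$ because $v,v'\ge0$ have unit $\ell^1$-norm, a contradiction. Thus $u=0$ and $M_n$ is a singleton. The same sign-definiteness yields $\ker\Gamma_n\cap\{\mathbf 1\cdot v=0\}=\{0\}$, which is exactly the invertibility of the matrix $A(n)$ invoked in the proof of Theorem~\ref{th:upward}.

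The main obstacle is the sign-definiteness step, together with the technicality that Lemma~\ref{lem:auxresult} is stated only for non-negative generators. I would address this either by quoting its proof at the precise point where sign-definiteness is obtained (before non-negativity is used), as above, or by re-running the short backward recursion directly: solving \eqref{eq:mstar2original} for $\nu_0$ on successive indices below the window and using the sign pattern of the $c_k$ to propagate a single sign down to $\{\sL,\ldots,\sU\}$. A minor point to record is the degenerate window $n=\sU$, where the equations read $v_{\sL+1}=\cdots=v_\sU=0$ and $M_n=\{(1,0,\ldots,0)\}$, and the trivial case $d=0$.
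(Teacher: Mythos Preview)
Your proof is correct and takes a genuinely different route from the paper's.

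The paper proves Proposition~\ref{pro-1} by a direct determinantal induction: it forms the $(d+1)\times(d+1)$ matrix $G(n)$ whose first $d$ rows are $\gamma(n-d+1),\ldots,\gamma(n)$ and whose last row is all ones, and shows by induction on $n$ that all cofactors along the last row have the same sign and are not all zero. The induction step expands $\gamma_j(n+1)$ via the recursion~\eqref{Eq-1}, uses multilinearity of the determinant, and tracks the sign of each surviving minor through Lemma~\ref{lem:ck}. This is a technical but self-contained computation.

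Your approach instead identifies $M_n$ with the normalised non-negative part of $\ker\Gamma_n$ and reduces everything to the assertion that every non-zero kernel vector is sign-definite. You obtain this from the sign-propagation argument inside the proof of Lemma~\ref{lem:auxresult} (case~\eqref{eq:c2}), correctly observing that the alternative ``(a) or (b)'' there is established \emph{before} the hypothesis $v\ge0$ is invoked, and that the subsequent downward recursion (case~\eqref{eq:c1}) likewise does not use $v\ge0$. This is a legitimate reading of that proof. Existence and uniqueness then follow in two lines from sign-definiteness and the $\ell^1$-constraint.

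What each approach buys: your argument is shorter, more conceptual, and makes the connection to the invertibility of $A(n)$ in Theorem~\ref{th:upward} transparent (your final remark). Its cost is that it leans on an internal step of another proof rather than on a statement; a cleaner presentation would extract the sign-definiteness step as a separate lemma. The paper's cofactor induction is longer but entirely self-contained and yields a little more: the explicit sign pattern $\mathrm{sign}(C_i^\ell(n))=(-1)^{(n-\sU)(\sU-\sL)+\ell}$ for all the auxiliary minors, not just the final conclusion.
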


\begin{proof}
For $n\ge\sU$, let $G(n)$ be the $(\sU-\sL+1)\times(\sU-\sL+1)$ matrix,
\begin{equation*}
G(n) = \begin{pmatrix}
\gamma_\sL(n-(\sU-\sL-1)) & \gamma_{\sL+1}(n-(\sU-\sL-1)) &\cdots& \gamma_\sU(n-(\sU-\sL-1))\\
\vdots&\vdots&&\vdots\\
\gamma_\sL(n-1) & \gamma_{\sL+1}(n-1) &\cdots& \gamma_\sU(n-1)\\
\gamma_\sL(n) & \gamma_{L+1}(n) & \cdots & \gamma_\sU(n)\\
1 & 1 & \cdots & 1
\end{pmatrix},
\end{equation*}
and let $c_i(n)$ be the cofactor   of $G(n)$ corresponding to the $(\sU-\sL+1)$-th row and $i$-th column, for $i=1,\ldots,\sU-\sL+1$. Then, $\det(G(n)) = \sum_{i=1}^{\sU-\sL+1}c_i(n)$, and there exists a unique solution $v^{(n)}$ to $G(n) v = e_{\sU-\sL+1}$, where $e_{\sU-\sL+1}$ is the $(\sU-\sL+1)$-th unit vector in $\R^{\sU-\sL+1}$, if and only if $\det(G(n))\neq0$. If this is the case, then by Cramer's rule,  
\[v_i^{(n)} = \frac{c_i(n)}{\sum_{j=1}^{\sU-\sL+1}c_j(n)},\quad \text{for}\quad i=1,\ldots, \sU-\sL+1,\]
and  hence $v^{(n)}\in\R^{\sU-\sL+1}_{\ge0}$ if and only if all cofactors have the same sign or are zero. Hence, we aim to show at least one cofactor is non-zero and that all non-zero cofactors have the same sign. In the following, for convenience, we say the elements of a sequence $a_1,\ldots,a_m$ have the same sign if all non-zero elements of the sequence have the same sign.

For $n\ge \sU$, define the $(\sU-\sL+2)\times(\sU-\sL+1)$ matrix
\begin{equation*}
\Gamma(n) = \begin{pmatrix}
\gamma_\sL(n-(\sU-\sL)) & \gamma_{\sL+1}(n-(\sU-\sL)) &\cdots& \gamma_\sU(n-(\sU-\sL))\\
\vdots&\vdots&&\vdots\\
\gamma_\sL(n-1) & \gamma_{\sL+1}(n-1) &\cdots& \gamma_\sU(n-1)\\
\gamma_\sL(n) & \gamma_{L+1}(n) & \cdots & \gamma_\sU(n)\\
1 & 1 & \cdots & 1
\end{pmatrix},
\end{equation*}
and the $(\sU-\sL+1)\times(\sU-\sL+1)$ matrices $\Gamma^\ell(n)$, $\ell=0,\ldots,\sU-\sL $, by removing row $m=\sU-\sL+1-\ell$  of $\Gamma(n)$ (that is, the $(\ell+1)$-th row counting from the bottom). For notational convenience, noting that the columns of $\Gamma(n)$ take a similar form, we write these matrices as
\begin{equation*}
\Gamma^{\ell}(n) = \begin{pmatrix}
\gamma_j(n-(\sU-\sL)) \\
\vdots\\
 \gamma_j(n-(\ell+1)) \\
 \gamma_j(n-(\ell-1)) \\
\vdots\\
\gamma_j(n) \\ 
1
\end{pmatrix},\quad \ell=0,\ldots,\sU-\sL.
\end{equation*}
Note that 
\begin{equation}\label{eq:Grelations}
G(n)=\Gamma^{\sU-\sL}(n),\quad\text{and}\quad \Gamma^0(n)=\Gamma^{\sU-\sL}(n-1).
\end{equation}
 Let $\Gamma^\ell_i(n)$ be the $(\sU-\sL)\times(\sU-\sL)$ matrix obtained  by removing the bottom row and the $i$-th column from  $\Gamma^\ell(n)$. Hence,  the  cofactor $C^\ell_i(n)$ of $\Gamma^\ell(n)$ corresponding to the $(\sU-\sL+1)$-th row and $i$-th column is  
\begin{equation}\label{eq:cofactor}
C^\ell_i(n)=(-1)^{\sU-\sL+1+i}\det(\Gamma^\ell_i(n)),\quad\text{and}\quad C^{\sU-\sL}_i(n)=c_i(n),
\end{equation}
$i=1,\ldots,\sU-\sL+1$. By induction, we will show that the signs of  $C^\ell_i(n)$, $i=1,\ldots,\sU-\sL+1$, are the same, potentially with some cofactors being zero, but at least one being non-zero.

Induction basis. For $n=\sU$, we have
\begin{equation*}
\Gamma(\sU) = \begin{pmatrix}
1 & 0 &\cdots& 0& 0\\
0 & 1 &\cdots& 0& 0\\
\vdots&\vdots&&\vdots&\vdots\\
0 & 0 &\cdots&1& 0\\
0 &0 & \cdots & 0&1 \\
1 & 1 & \cdots & 1&1
\end{pmatrix},
\end{equation*}
and it follows by tedious calculation that
$$C^\ell_{\sU-\sL+1-\ell}(\sU)=(-1)^\ell,\quad C^\ell_i(\sU)=0\quad\text{for}\quad \ell\not=\sU-\sL+1-i,$$
$i=1,\ldots,\sU-\sL+1$ and $\ell=0,\ldots,\sU-\sL$. It follows that the $C^\ell_i$s have the same sign  for $\ell$ fixed and all $i=1,\ldots,\sU-\sL+1$ (all cofactors are zero, except for $i=\sU-\sL+1-\ell$). 

We postulate that the non-zero elements fulfil
$$ \text{sign}(C^\ell_i(n))=(-1)^{(n-\sU)(\sU-\sL)+\ell},\quad \text{for}\quad n\ge\sU,$$
 and $\ell=0,\ldots,\sU-\sL$, $i=1,\ldots,\sU-\sL+1$. The hypothesis holds for $n=\sU$.

Induction step. Assume the statement is correct for some $n\ge\sU$. 
Using $m_*=\om_+-\om_--1=\sU-\sL+1$ and \eqref{Eq-1}, we obtain for $\ell=1 ,\ldots,\sU-\sL$ (excluding $\ell=0$),
\begin{equation*}
\Gamma^{\ell}(n+1) = \begin{pmatrix}
\gamma_j(n+1-(\sU-\sL)) \\
\vdots\\
 \gamma_j(n+1-(\ell+1)) \\
 \gamma_j(n+1-(\ell-1)) \\
\vdots\\
\gamma_j(n)\\
\gamma_j(n+1) \\
1 
\end{pmatrix}=\begin{pmatrix}
\gamma_j(n+1-(\sU-\sL)) \\
\vdots\\
 \gamma_j(n+1-(\ell+1)) \\
 \gamma_j(n+1-(\ell-1)) \\
\vdots\\
\gamma_j(n)\\
\sum_{k=1}^{\sU-\sL+1} \gamma_j(n+1-k)f_k(n+1) \\
1 
\end{pmatrix}.
\end{equation*}
Hence,  using the linearity of the determinant, we obtain for $0<\ell\le \sU-\sL$ (excluding $\ell=0$),
\begin{align*}
 \det(\Gamma_i^\ell(n+1)) & =  \begin{vmatrix*}[c]
\gamma_j(n+1-(\sU-\sL)) \\
\vdots\\
 \gamma_j(n+1-(\ell+1)) \\
 \gamma_j(n+1-(\ell-1)) \\
\vdots\\ 
\gamma_j(n)\\
 \gamma_j(n+1-\ell)f_\ell(n+1) 
\end{vmatrix*}+\begin{vmatrix*}[c]
\gamma_j(n+1-(\sU-\sL)) \\
\vdots\\
 \gamma_j(n+1-(\ell+1)) \\
 \gamma_j(n+1-(\ell-1)) \\
\vdots\\
\gamma_j(n)\\
 \gamma_j(n+1-(\sU-\sL+1))f_{\sU-\sL+1}(n+1)
\end{vmatrix*} \\
&=f_\ell(n+1)\begin{vmatrix*}[c]
\gamma_j(n+1-(\sU-\sL)) \\
\vdots\\
 \gamma_j(n+1-(\ell+1)) \\
 \gamma_j(n+1-(\ell-1)) \\
\vdots\\ 
\gamma_j(n)\\
 \gamma_j(n+1-\ell)
\end{vmatrix*}+f_{\sU-\sL+1}(n+1)\begin{vmatrix*}[c]
\gamma_j(n+1-(\sU-\sL)) \\
\vdots\\
 \gamma_j(n+1-(\ell+1)) \\
 \gamma_j(n+1-(\ell-1)) \\
\vdots\\
\gamma_j(n)\\
 \gamma_j(n+1-(\sU-\sL+1))
\end{vmatrix*} \\
&=f_\ell(n+1)(-1)^{\ell-1}\begin{vmatrix*}[c]
\gamma_j(n-(\sU-\sL-1)) \\
\vdots \\
 \gamma_j(n )
\end{vmatrix*} \\
&\qquad\qquad+f_{\sU-\sL+1}(n+1)(-1)^{\sU-\sL-1}\begin{vmatrix*}[c]
\gamma_j(n-(\sU-\sL)) \\
\vdots\\
 \gamma_j(n - \ell ) \\
 \gamma_j(n-(\ell-2)) \\
\vdots\\
 \gamma_j(n )
\end{vmatrix*} \\
&=f_\ell(n+1)(-1)^{\ell-1}\det(\Gamma_i^{\sU-\sL}(n))+f_{\sU-\sL+1}(n+1)(-1)^{\sU-\sL-1}\det(\Gamma_i^{ \ell-1 }(n))
\end{align*}
with the remaining terms from the linear expansion of the determinant are zero.  In the computation of the determinant above, we abuse $\gamma_j(n+1-k)$ for the row vector with  the $i$-th coordinate deleted.  For $\ell=0$, then using \eqref{eq:Grelations},
$$\det(\Gamma^0_i(n+1))=\det(\Gamma^{\sU-\sL}_i(n)).$$
The above conclusions result in the following for the sign of the cofactors, using \eqref{eq:cofactor}:
\begin{align}
 C_i^\ell(n+1)) &=f_\ell(n+1)(-1)^{\ell-1}C_i^{\sU-\sL}(n) \label{eq:cofactor2} \\
& \quad\quad +f_{\sU-\sL+1}(n+1)(-1)^{\sU-\sL-1} C_i^{ \ell-1 }(n),\quad 0<\ell\le\sU-\sL, \nonumber \\
C^0_i(n+1)&=C^{\sU-\sL}_i(n).\label{eq:cofactor3}
\end{align}

We recall some properties of $f_\ell(n)$.  According to Lemma \ref{lem:ck}(vi), $f_{\sU-\sL+1}(n)>0$ for $n\ge\sU+1$, using $\si_+=0$  (otherwise  zero is a trapping state) and $-\om_-=\sU-\sL+1$. For  $0\le \ell<-\om_-$, we have   
 $\text{sgn}(\om_-+\ell+1/2)=-1$, and hence   $f_\ell(n)\le0$ for $n\ge\sU+1$, according to Lemma \ref{lem:ck}(vii)-(viii).
Consequently, the sign of the two terms in   \eqref{eq:cofactor2} are: 
$$(-1)(-1)^{\ell-1}(-1)^{(\sU-\sL)(n-\sU)+(\sU-\sL)}=(-1)^{(\sU-\sL)(n+1-\sU)+\ell},$$
$$(+1)(-1)^{\sU-\sL+1}(-1)^{(\sU-\sL)(n-\sU)+\ell-1}=(-1)^{(\sU-\sL)(n+1-\sU)+\ell},$$
hence the sign of  $C_i^\ell(n+1)$ corroborates the induction hypothesis. The  sign  of the term in \eqref{eq:cofactor3} is
$$(-1)^{\sU-\sL}(-1)^{(\sU-\sL)(n-\sU)}=(-1)^{(\sU-\sL)(n+1-\sU)+0}, $$
again in agreement with the induction hypothesis. 

It remains to show that at least one cofactor is non-zero, that is,  $C_i^{\sU-\sL}(n)\neq0$  for   at least one $1\le i\le \sU-\sL+1$ and $n\ge \sU$. Let $a_{n,\ell}=\sum_{i=1}^{\sU-\sL+1}|C_i^{\ell}(n)|$.   From \eqref{eq:Grelations} and \eqref{eq:cofactor2}, we have
\begin{align}\label{Eq:recurrence-a_n}
a_{n+1,\ell}&=|f_{\ell}(n+1)|a_{n,\sU-\sL}+|f_{\sU-\sL+1}(n+1)|a_{n,\ell-1},\quad 1\le\ell\le \sU-\sL, \\
 a_{n+1,0}&=a_{n,\sU-\sL}, \nonumber
\end{align}
for $n\ge \sU.$  We show by induction that $a_{n,\ell}\not=0$ for $n\ge \sU$ and $0\le\ell\le \sU-\sL$. Hence, the desired conclusion follows. For $n=\sU$, we have $a_{\sU,\ell}=1$ for all $\ell=0,\ldots,\sU-\sL$. Assume $a_{n,\ell}\not=0$ for   $\ell=0,\ldots,\sU-\sL$ and some $n\ge\sU$. Since $f_{\sU-\sL+1}(n+1)>0$ for $n+1\ge \sU+1$, then it follows from \eqref{Eq:recurrence-a_n} that $a_{n+1,\ell}\not=0$ for   $\ell=0,\ldots,\sU-\sL$. The proof is completed. 
\end{proof}

\section{The generating function}

 In this and the next   section, we consider one-species SRNs mainly with mass-action kinetics, without assuming (${\bf A3}$). That is, we allow for multiple PICs indexed by $s = \max\{\si_+,\so_-\},\ldots,\so_-+\om_*-1$ and general $\om_*\ge 1$. For a stationary measure $\pi$ of $(\Omega,\cF)$ on a PIC $\om_*\N_0+s$ (for some $s$), we define 
\begin{equation}\label{eq:pis}
\pi_s(\ell)=\pi(s+\ell\om_*),\quad \text{for}\quad \ell\in\N_0.
\end{equation}
 Then, $\pi_s$ is  a stationary measure of $(\Omega_*,\cF_s)$ on $\N_0$ for the translated Markov chain as in Section \ref{sec:charac}. Similarly, we add the superscript $s$ to $\gamma_j(\ell), c_k(\ell), f_k(\ell)$, whenever convenient. These quantities are already expressed in terms of the translated Markov chain.

For one-species SRNs with  mass-action kinetics, one might represent the stationary distribution (when it exists) through the probability generating function (PGF). Recall that for a random variable $X$ on $\N_0$ with law $\mu$, the PGF is:
$$g(z)=\mathbb{E}(z^X)=\sum_{x\in\N_0}\mu(x) z^x,\quad |z|\le 1,$$
such that
$$\mu(x)=\frac{g^{(x)}(0)}{x!}, \quad x\in\N_0,$$
where $g^{(x)}$ is the $x$-th derivative of $g$.

\begin{proposition}\label{pro-4}
Let  a one-species SRN fulfilling $\emph{({\bf A1})}$ with  mass-action kinetics be given, hence $\emph{({\bf A2})}$ also holds. Assume there exists a  stationary distribution $\pi$ on  $\om_*\N_0+s$, for some $s\in\{\max\{\si,\so\},\ldots,\so+\om_*-1\}$. Let $\pi_s(\ell)=\pi( \om_*\ell+s)$, $\ell\in\N_0$, be the corresponding distribution on $\N_0$ and $g_s$ the  PGF of $\pi_s$. Then, $g_s$ solves the following differential equation:
$$\sum_{y_k\to y_k' \in \mathcal{R}} \kappa_k(z^{y_k'}-z^{y_k})g_s^{(y_k)}(z)=0, \quad |z|\le 1.$$
\end{proposition}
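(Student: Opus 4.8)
The plan is to obtain the differential equation directly from the stationarity equation by the classical generating-function technique, the point being that for mass-action kinetics the falling factorial appearing in $\eta_k$ is exactly the factor produced by differentiating the monomial $z^{x}$.

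I would first record the key analytic identity. Writing $g(z)=\sum_{x\in\N_0}\pi(x)z^{x}$ for the generating function of the stationary distribution $\pi$ (extended by $0$ off its support $\om_*\N_0+s$), term-by-term differentiation gives
\[
g^{(y_k)}(z)=\sum_{x\in\N_0}\pi(x)\frac{x!}{(x-y_k)!}\,z^{x-y_k},
\]
so that, using $\eta_k(x)=\kappa_k\,x!/(x-y_k)!$ together with the admissibility convention $\eta_k(x)=0$ for $x<y_k$,
\[
\kappa_k\,z^{y_k}g^{(y_k)}(z)=\sum_{x\in\N_0}\eta_k(x)\pi(x)z^{x},\qquad y_k\to y_k'\in\mathcal R.
\]

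Next I would sum the master equation \eqref{eq:master} against $z^{x}$. Every jump $\om\in\Omega$ is a multiple of $\om_*=\gcd\Omega$ and $\pi$ is carried by $\om_*\N_0+s$, so both sums in \eqref{eq:master} vanish off the support and the stationarity identity in fact holds for every $x\in\N_0$; hence I may multiply by $z^{x}$ and sum over all $x\ge0$. Grouping the result reaction by reaction and, in the in-flux term, substituting $u=x-(y_k'-y_k)$, which contributes a factor $z^{\,y_k'-y_k}$, I obtain
\[
0=\sum_{y_k\to y_k'\in\mathcal R}\bigl(z^{\,y_k'-y_k}-1\bigr)\sum_{u\in\N_0}\eta_k(u)\pi(u)z^{u}=\sum_{y_k\to y_k'\in\mathcal R}\kappa_k\bigl(z^{y_k'}-z^{y_k}\bigr)g^{(y_k)}(z),
\]
using the previous identity and $(z^{\,y_k'-y_k}-1)z^{y_k}=z^{y_k'}-z^{y_k}$. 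This is the asserted equation for the generating function of $\pi$; the stated form in terms of $g_s$ is then obtained by the routine bookkeeping of the embedding $\pi_s(\ell)=\pi(\om_*\ell+s)$, which specializes transparently in the canonical normalization $\om_*=1$, $s=0$ (to which the general case is reduced by the translation of Section \ref{sec:charac}), where $g_s$ coincides with the generating function of $\pi$.

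The main obstacle is analytic rather than algebraic: I must justify both the term-by-term differentiation of $g$ and the interchange of the two summations for $|z|\le1$. For $|z|<1$ this is immediate, since $\eta_k(x)=O(x^{y_k})$ while $\pi$ is a probability distribution, so each series $\sum_x\eta_k(x)\pi(x)|z|^{x}$ is dominated by a convergent one and all manipulations are absolutely convergent; thus the identity holds as an equality of functions analytic on $\{|z|<1\}$. Extending it up to the boundary $|z|=1$, as the statement asserts, requires the finiteness of the factorial moments $\mathbb E\!\left[X(X-1)\cdots(X-y_k+1)\right]$ up to order $\max_k y_k$, or an Abel-type continuity argument letting $|z|\uparrow1$; I would establish the identity on the open disc first and then treat this boundary regularity separately.
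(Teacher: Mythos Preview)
Your proposal is correct and follows essentially the same route as the paper: multiply the master equation by $z^{x}$, sum over $x$, use the falling-factorial structure of mass-action kinetics to identify $\sum_x\eta_k(x)\pi(x)z^{x}$ with $\kappa_k z^{y_k}g^{(y_k)}(z)$, and shift indices in the in-flux term. One clarification: your function $g(z)=\sum_{x\in\N_0}\pi(x)z^{x}$ already \emph{is} what the paper calls $g_s$ (in the proof the paper takes $g_s(z)=\sum_{x}\pi_s(x)z^{s+x\omega_*}$, which coincides with your $g$ since $\pi$ is supported on $\omega_*\N_0+s$), so no extra ``bookkeeping'' or reduction to $\omega_*=1$ is needed---your computation is the proof for general $\omega_*,s$. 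Your explicit attention to the analytic justification (absolute convergence on $|z|<1$, boundary behaviour) is a point the paper passes over silently.
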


\begin{proof}
Recall that $\pi_s$ is an equilibrium of the master equation:
\begin{align*}
\sum_{y_k\to y_k'\in \mathcal{R}}\lambda_k(s+x\om_*-\xi_k)\pi_s(x-\xi_k\om_*^{-1})-\sum_{y_k\to y_k'\in \mathcal{R}}\lambda_k(s+x\om_*)\pi_s(x)=0.
\end{align*}
Multiplying through by $z^{s+x\om_*}$ and summing over $x\in\N_0$, we obtain
\begin{align*}
\sum_{y_k\to y_k'\in \mathcal{R}}\sum_{x=0}^{\infty}\lambda_k(s+x\om_*-\xi_k)\pi_s(x-\xi_k\om_*^{-1})z^{s+x\om_*}-\sum_{y_k\to y_k'\in \mathcal{R}}\sum_{x=0}^{\infty}\lambda_k(s+x\om_*)\pi_s(x)z^{s+x\om_*}=0.
\end{align*}
Using the form of mass-action kinetics, and rearranging terms,
\begin{align*}
\sum_{y_k\to y_k'\in \mathcal{R}}\kappa_kz^{y_k'}\sum_{x=0}^{\infty} (s+x\om_*)^{\underline{y_k}}\pi_s(x)z^{s+x\om_*-y_k}-\sum_{y_k\to y_k'\in \mathcal{R}}\kappa_k z^{y_k}\sum_{x=0}^{\infty} (s+x\om_*)^{\underline{y_k}}\pi_s(x)z^{s+x\om_*-y_k}=0,
\end{align*}
where $x^{\underline{y}}=x(x-1)\ldots(x-y+1)$. Here, we have used that $\pi_s(n)=0$ for $n<0$, and if $\xi_k<0$, then $s-\xi_k-\om_*\le\so-1-\xi_k<y_k'-\xi_k=y_k$, which implies that $(s+n\om_*)^{\underline{y_k}}=0$ for $0\le n<-\xi_k\om_*^{-1}$. As $g_s(z)=\sum_{x=0}^{\infty} \pi_s(x) z^{s+x\om_*}$, we may write this as
\begin{align*}
\sum_{y_k\to y_k'\in \mathcal{R}}\kappa_kz^{y_k'}g_s^{(y_k)}(z)-\sum_{y_k\to y_k'\in \mathcal{R}}\kappa_k z^{y_k}g_s^{(y_k)}(z)=0,
\end{align*}
whence upon collecting terms yields the desired.
\end{proof}

Since
$$\pi_s(j)=\frac{g_s^{(s+j\om_*)}(0)}{(s+j\om_*)!},\quad\text{for all}\quad j\in\N_0,$$
 we arrive at the following alternative expression for the stationary distribution.

\begin{corollary}\label{cor-pgf}
Assume as in Proposition \ref{pro-4}. Then
$$\pi_s(\ell)=\sum_{j=\sL_s}^{\sU_s}\frac{g_s^{(s+j\om_*)}(0)}{(s+j\om_*)!}\gamma^s_j(\ell), \quad \ell\in\N_0,  $$
where $\gamma^s_j$ is defined as in \eqref{Eq-1}, and $\pi_s$ as in \eqref{eq:pis}.
\end{corollary}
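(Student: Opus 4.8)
The plan is to obtain this formula as an immediate consequence of Theorem \ref{th-1} together with the coefficient-extraction identity for the generating function $g_s$ that is recorded just before the statement. In other words, the corollary is a substitution: Theorem \ref{th-1} expresses $\pi_s(\ell)$ as a linear combination of the generating terms $\pi_s(\sL_s),\ldots,\pi_s(\sU_s)$, and the generating terms are then rewritten in terms of derivatives of $g_s$ at $0$.

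First I would note that, by hypothesis, $\pi$ is a stationary distribution on the PIC $\om_*\N_0+s$, so by the discussion in Section \ref{sec:charac} the translated measure $\pi_s$ defined in \eqref{eq:pis} is a stationary measure of the translated chain $(\Omega_*,\cF_s)$ on $\N_0$, and this chain satisfies $(\textbf{A1})$--$(\textbf{A3})$. Hence Theorem \ref{th-1} applies verbatim to $(\Omega_*,\cF_s)$, yielding
\[
\pi_s(\ell)=\sum_{j=\sL_s}^{\sU_s}\pi_s(j)\,\gamma^s_j(\ell),\qquad \ell\in\N_0,
\]
where $\gamma^s_j$ is the coefficient function \eqref{Eq-1} built from the transition rates of the translated chain (this is why the superscript $s$ appears on $\gamma_j$).

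Second, I would replace each generating term $\pi_s(j)$, $\sL_s\le j\le\sU_s$, using the generating-function identity stated immediately before the corollary. Since $\pi_s$ is a probability distribution, $g_s(z)=\sum_{x\ge0}\pi_s(x)z^{s+x\om_*}$ is a power series convergent for $|z|\le1$, so the coefficient of $z^{s+j\om_*}$ is exactly $\pi_s(j)$, giving $\pi_s(j)=g_s^{(s+j\om_*)}(0)/(s+j\om_*)!$. Substituting this into the sum above produces the claimed formula directly.

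There is no real obstacle here: the statement is a composition of two results that are already in place, and the only points requiring a word of care are verifying that Theorem \ref{th-1} is applicable to the translated chain (i.e.\ that $(\textbf{A1})$--$(\textbf{A3})$ transfer to $(\Omega_*,\cF_s)$, which was arranged in Section \ref{sec:charac}) and that coefficient extraction by differentiation at $0$ is legitimate, which holds because $g_s$ is a convergent power series. I would therefore keep the proof to these two short steps and the substitution.
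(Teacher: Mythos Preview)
Your proposal is correct and matches the paper's approach exactly: the corollary is stated immediately after the identity $\pi_s(j)=g_s^{(s+j\om_*)}(0)/(s+j\om_*)!$ and is obtained precisely by substituting this into the representation from Theorem~\ref{th-1} applied to the translated chain $(\Omega_*,\cF_s)$. The paper gives no separate proof beyond this observation, so your two-step argument is all that is needed.
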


\begin{example}\label{ex:53}
Consider the SRN with mass-action kinetics,
\[0\ce{->[\ka_1]} \tS \ce{->[\ka_2]} 2\tS\ce{->[\ka_3]}0,\]
with $\ka_1,\ka_2,\ka_3>0$.
This is an upwardly skip-free process: $\om_*=1$, $\om_+=1$, $\om_-=-2$, $s=0$, $\sL_0=0$, $\sU_0=1$.

Applying Proposition~\ref{pro-4}, the governing differential equation of the PGF is,
\[
\ka_3(1-z^2)g''_0(z)+\ka_2(z^2-z)g_0'(z)+\ka_1(z-1)g_0(z)=0,\quad |z|\le 1,\]
that is,
\begin{equation}\label{Eq-7}
(1+z)g''_0(z)-\frac{\ka_2}{\ka_3}zg_0'(z)-\frac{\ka_1}{\ka_3}g_0(z)=0.\end{equation}
Let  $\tilde{z}=\frac{\ka_2}{\ka_3}(z+1)$ and $h(\tilde{z})=g_0(z)$, then  \eqref{Eq-7} yields the Kummer differential equation \cite{AS72}:
\[\tilde{z}h''(\tilde{z})+\lt(\frac{\ka_2}{\ka_3}-\tilde{z}\rt)h'(\tilde{z})-\frac{\ka_1}{\ka_2}h(\tilde{z})=0.\]
Taking $g_0(1)=1$,
\[g_0(z)=\frac{{}_1F_1\!\left(\frac{\ka_1}{\ka_2};\frac{\ka_2}{\ka_3};\frac{(z+1)\ka_2}{\ka_3}\right)}{{}_1F_1\!\left(\frac{\ka_1}{\ka_2};\frac{\ka_2}{\ka_3};\frac{2\ka_2}{\ka_3}\right)},\]
where
$${}_1F_1(a;b;z)=\sum_{n=0}^{\infty}\frac{a^{\overline{n}}z^n}{b^{\overline{n}}n!},$$
 is the {\em confluent hypergeometric function} (also called {\em Kummer's function}) of the first kind \cite{AS72}, and $a^{\overline{n}}=\frac{\Gamma(a+n)}{\Gamma(a)}$ is the ascending factorial.
Using the relation between Kummer's function and its derivatives
\[\frac{d {}_1 F_1^{(n)}(a;b;z)}{ d z^n}=\frac{a^{\overline{n}}}{b^{\overline{n}}}{}_1 F_1(a+n;b+n;z),\quad n\in\N,\]
it follows that
\begin{equation*}
\pi(x)=\frac{g_0^{(x)}(0)}{x!}=\frac{1}{x!}\lt(\frac{\ka_2}{\ka_3}\rt)^{\!x}\frac{\Gamma(\frac{\ka_2}{\ka_3})}{\Gamma(\frac{\ka_1}{\ka_2})}\frac{\Gamma(x+\frac{\ka_1}{\ka_2})}
{\Gamma(x+\frac{\ka_2}{\ka_3})}\frac{{}_1F_1(x+\frac{\ka_1}{\ka_2};x+\frac{\ka_2}{\ka_3};\frac{\ka_2}{\ka_3})}{{}_1F_1(\frac{\ka_1}{\ka_2};\frac{\ka_2}{\ka_3};\frac{2\ka_2}{\ka_3})}.
\end{equation*}
In particular, $\ka_1\ka_3=\ka_2^2$ if and only if  $\pi$ is a Poisson distribution,
$$\pi(x)=\frac{1}{x!}\left(\frac{\ka_1}{\ka_2}\right)^{\!x}e^{-\frac{\ka_1}{\ka_2}}.$$
When this is the case, the reaction network is \emph{complex balanced} and the form of the stationary distribution is well known \cite{ACK10}.

On the other hand,
$$ g_0(0)=\frac{{}_1F_1\!\left(\frac{\ka_1}{\ka_2};\frac{\ka_2}{\ka_3};\frac{\ka_2}{\ka_3}\right) }{{}_1F_1\!\left(\frac{\ka_1}{\ka_2};\frac{\ka_2}{\ka_3};\frac{2\ka_2}{\ka_3}\right)}, \quad g'_0(0)=\frac{\ka_1}{\ka_2}\frac{{}_1F_1\!\left(1+\frac{\ka_1}{\ka_2};1+\frac{\ka_2}{\ka_3};\frac{\ka_2}{\ka_3}\right) }{{}_1F_1\!\left(\frac{\ka_1}{\ka_2};\frac{\ka_2}{\ka_3};\frac{2\ka_2}{\ka_3}\right)},$$
and by Corollary~\ref{cor-pgf},
\begin{equation*}
\pi(x)=g_0(0)\gamma^0_0(x)+g_0'(0)\gamma^0_1(x),\quad x\in\N_0.
\end{equation*}
\end{example}

\section{Examples}\label{sec:examples}

To end we present some examples using the linear approximation scheme and the convex constrained optimization approach.
We use the criteria in  \cite[Theorem 7]{HWX}  to check whether a CTMC with polynomial transition rates is positive  recurrent, null recurrent, transient and non-explosive, or explosive. These properties hold for either all PICs or none, provided the transition rate functions are polynomials for large state  values, as in mass-action kinetics  \cite[Theorem 7]{HWX}. 

We have made two implementations of the numerical methods. One in \texttt{R} and one in    \texttt{Python} (only mass-action kinetics) with codes available on request. In all examples, the  code runs in few seconds   on a standard laptop. The two codes agree when run on the same example. We have not aimed to optimize for speed.  In figures, `State $x$' refers to the state of the original Markov chain, and `Index $n$'  refers to the translated Markov chain, the index of, for example,  $\gamma_j^s(n)$. The index $s$ refers to the irreducibility class in Proposition \ref{pro-1}.
  
\begin{example}
Consider the SRN with mass-action kinetics,
\begin{equation*}
 \tS\ce{->[\ka_1]} 2\tS \ce{<=>[\ka_2][\ka_4]} 3\tS\ce{->[\ka_3]}\tS.
\end{equation*}
We have $\omega_+=1$ and $\omega_-=-2$   with $s=1$ (zero is a neutral state), and $\sL_1=0, \sU_1=1$. Furthermore, a unique stationary distribution  exists  since the SRN is positive recurrent for all positive rate constants \cite[Theorem 7]{HWX}, and $\pi_1(x)=\pi(1+x)$. As the  reaction network is weakly reversible (the reaction graph consists of a finite union of disjoint strongly connected components), then it is complex balanced for specific choices of rate constants, yielding a Poisson stationary distribution \cite{ACK10}. This is the case if and only if $\ka_1(\ka_3+\ka_4)^2=\ka_2^2\ka_3$.

Here, we focus on the stability of the numerical approximations using  the  linear approximation scheme and convex constrained optimization for  a single set of parameters, $\ka_1=40, \ka_2=22, \ka_3=\ka_4=1$, see Figure \ref{fig:fig1}.
Convex constrained optimization fails for $n>18$ in \eqref{Eq-4} due to exponentially increasing  $\gamma_j^1(\ell)$ values with alternating signs. In contrast, the linear approximation scheme is quite robust and returns accurate estimates for the generating terms $\pi_1(0),\pi_1(1)$ ($=\pi(1),\pi(2))$, even for $n=70$. However, in this situation,  inaccurate and negative probability values for large state values are returned, see Figure \ref{fig:fig1}. The estimated values of $\pi(32)$ and $\pi(33)$ are zero to the precision of the computer and the first negative estimate is $\pi(34)=-8.4\cdot 10^{-13}$. From then on, the estimated probabilities increase in absolute value.  The estimated  generating terms for the convex constrained optimization problem with $n=18$ and the linear approximation scheme with $n=25$ deviate on the seventh decimal point only. In the latter case, the estimates remain unchanged for $25\le n\le 70$  for up to seven decimal points, despite negative probabilities are found for large  $n$.

 It is remarkable that for $n=70$ with $\gamma_j^1(n)$ of the order $e^{50}\approx 10^{22}$, we still numerically find that $M_n$ is a singleton set, as postulated in Proposition~\ref{pro-1}, despite the solution gives raise to instabilities in calculating the probabilities. Also the numerical computations confirm that the limit in \eqref{eq:lim} in Lemma \ref{le:stationary-measure} is zero, as  $\gamma_j^1(n)$ increases beyond bound.
\end{example}

\begin{figure}[h!]
\begin{center}
\includegraphics[width=5in]{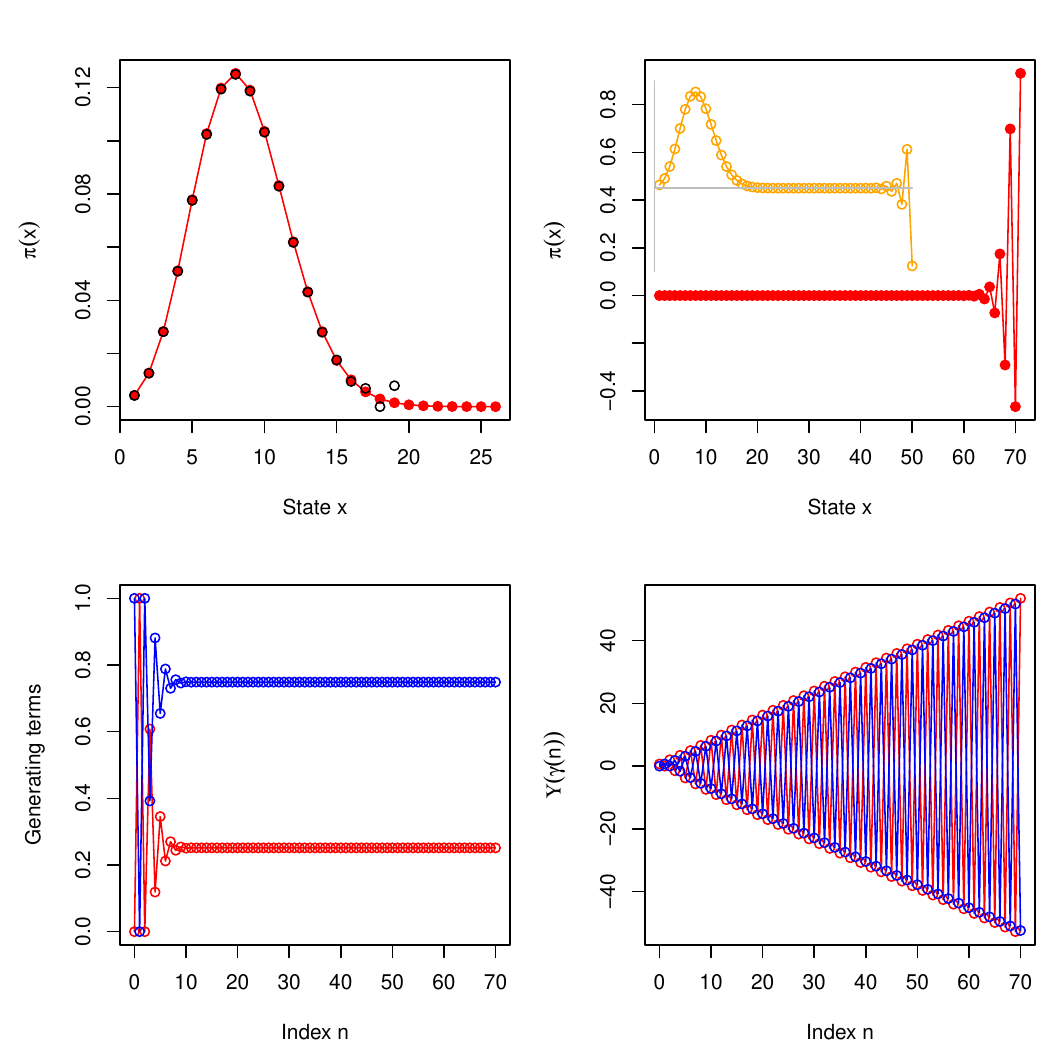}
\caption{Top left:  The stationary distribution calculated using the linear approximation scheme with  $n=25$ (red dots) and convex constrained optimization with $n=18$ (black circles). The latter results in wrong probability estimates for the states $x=18$ and $x=19$. Top right: The stationary distribution calculated using the linear approximation scheme with  $n=70$ (red dots). The orange subfigure is a blow up of the first 50 states for $n=70$, normalized to their sum, indicating the correct form of the distribution is retrieved even in the presence of instability, and the onset of instabilities. Bottom left:  Approximate values of $\pi_1(\sL_1)$ and $\pi_1(\sU_1)$ as function of $n$ found using the linear approximation scheme. Convergence is very fast. Bottom right: The values of $\gamma^1_{\sL_1}(n)$ and  $\gamma^1_{\sU_1}(n)$ for $n =0,\ldots,70$. The coefficients are plotted as $\Upsilon(x)=\log(x+1)$ for $x\ge 0$ and $\Upsilon(x)=-\log(-x+1)$ for $x\le 0$. Dots are connected by lines for convenience. }
\label{fig:fig1}
\end{center}
\end{figure}

The following example demonstrates that both the linear approximation scheme and the convex optimization approach can be efficient in practice for ergodic CTMCs.

\begin{example}
For the SRN with mass-action kinetics,
\begin{equation}\label{cycle}
    \xymatrix{  \tS \ce{->[\ka_1]}  3\tS\ce{ ->[\ka_2]}  2\tS \ce{->[\ka_3]}   4\tS \ce{->[\ka_4]}  \tS}
\end{equation}
we obtain $\om_+=2$, $\om_-=-3$, and $s=1$, $\sL_1=0$, $\sU_1=2$, such that there is one PIC with state space $\N$.
 Despite Proposition \ref{pro-1} does not apply (as $\om_+\not=1$), numerically we find that $M_n$ is a singleton set.  Using the linear approximation scheme or the convex optimization approach, we obtain a rather quick convergence, see Figure~\ref{fig:fig2}. In this case, the coefficients $\gamma^1_j(\ell)$, $j=0,1,2$, decrease fast towards zero, taking both signs. Both algorithms run efficiently even for large $n$ as the coefficients vanish.  The bottom right plot in Figure \ref{fig:fig2} shows $\gamma(n)/\|\gamma(n)\|_1$. There appears to be a periodicity of $\sU_1-\sL_1+1=3$, demonstrating numerically that the matrices  $A(3n)$, $A(3n+1)$ and $A(3n+2)$, $n\in\N_0$, each converges as $n\to\infty$, see Theorem \ref{th:upward}.  The generator recovered from either of the three sequences $A(3n)$, $A(3n+1)$ and $A(3n+2)$ agree to high accuracy, and agree with the generator found using the linear approximation scheme.
\end{example}

\begin{figure}[h!]
\begin{center}
\includegraphics[width=4in]{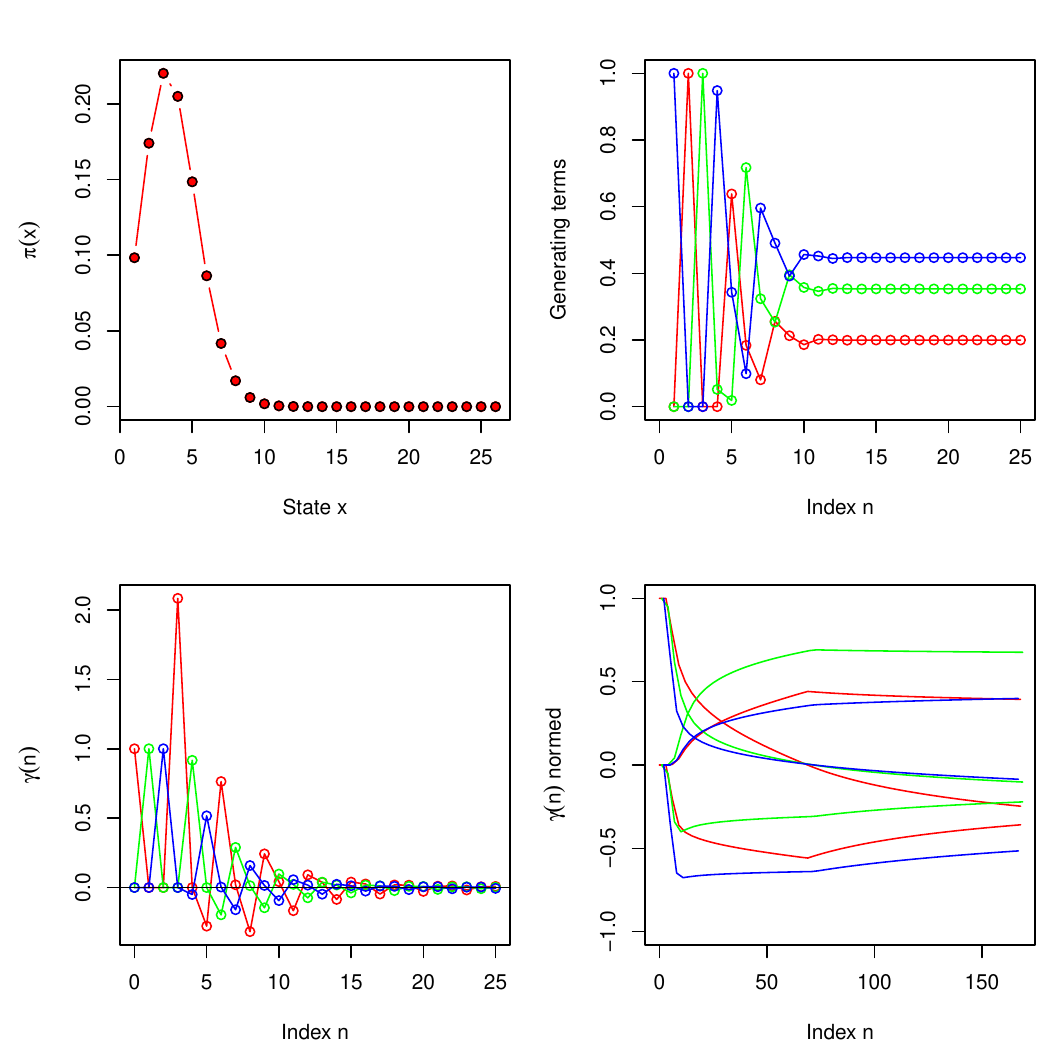}
\caption{Top left: Stationary distribution for \eqref{cycle} with $\ka_1= 50$, $\ka_3=5$ and $\ka_2=\ka_4=1$. The linear approximation scheme is shown in red, while the convex constrained optimization scheme is overlaid in black. Dots are connected by lines for convenience. Top right and Bottom left: Convergence of the generating terms is fast as $\gamma_j^s(\ell)$ decreases fast to zero with $\ell$ becoming large. Bottom right: Shown is $\gamma(3n)/\|\gamma(3n)\|_1$ (red), $\gamma(3n+1)/\|\gamma(3n+1)\|_1$ (green), and $\gamma(3n+2)/\|\gamma(3n+2)\|_1$ (blue).  The numerical computations clearly indicate periodicity. Note that despite convergence has not been achieved for the simulated values of $n$, the generators recovered from the three series $A(3n),A(3n+1), A(3n+2)$  agree with those found from the linear approximation scheme, see Theorem \ref{th:upward}. Dots are replaced by lines for visual reasons. }
\label{fig:fig2}
\end{center}
\end{figure}

Although, in theory, it seems to make  sense to use the linear approximation scheme \emph{only} for stationary measures for which $\pi(n)/\|\gamma(n)\|$ is vanishing. In practice, it seems that the linear approximation scheme still captures the feature of a stationary measure with non-vanishing $\pi(n)/\|\gamma(n)\|_1$ decently, when the states are not too large.

\begin{example} 
Computing an unknown distribution. We give an example of  a mass-action SRN,
$$0\ce{->[10]}\tS\ce{->[12]}2\tS\ce{->[1]}6\tS,\quad 2\tS\ce{->[2]}0,$$
which is null recurrent by the criterion in  \cite[Theorem 7]{HWX}, and hence there exists a unique stationary measure due to \cite[Theorem~3.5.1]{markov} and \cite[Theorem~1]{D55}. In this case, $\omega_1=-2$ and $\omega_+=4$. Furthermore, $s=0$ and $\sL_0=0$, $\sU_0=1$. We apply the linear approximation scheme to the SRN with $n=150$ and find that $M_n$ is a singleton set, despite $\om_+\not=1$, see Proposition \ref{pro-1}. For large states the point measures  are considerably far from zero, see  Figure \ref{fig:fig4}. Moreover, instabilities occur.  The inaccuracies in the values are due to small inaccuracies in the estimation of the generating terms and the large coefficients $\gamma_j(\ell)$ that increases exponentially.  
\end{example}

\begin{figure}[h!]
\begin{center}
\includegraphics[width=4in]{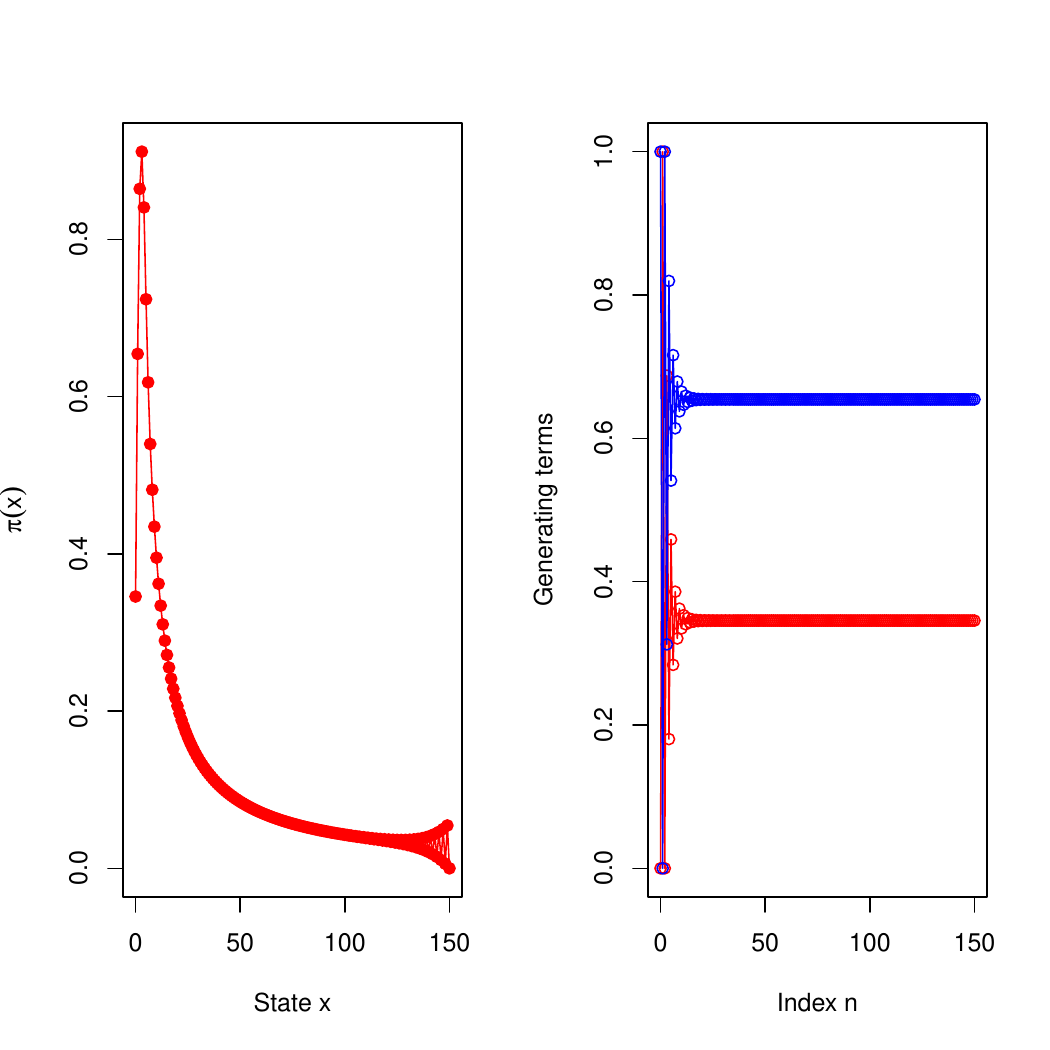}
\caption{Left: The stationary measure computed with the linear approximation scheme and $n=150$. For large states, significant errors occur in the estimation. Right: The generating terms.  }  
\label{fig:fig4}
\end{center}
\end{figure}
 
 We know from Corollary~\ref{Cor:uniqueness} that there exists a unique stationary measure for CTMCs with polynomial transition rates if $\om_-=-2$ and $\om_+=1$, it remains to see if such a stationary measure is finite.  With the aid of our numerical scheme, we might be able to infer this information in some scenarios.

 \begin{example} 
Computing  an unknown measure. Consider the following SRN,
$$10\tS\ce{->[5000]}12\tS\ce{->[10]}13\tS\ce{->[1]}16\tS,\quad 13\tS\ce{->[1]}10\tS.$$
 It is explosive  by the criterion in \cite[Theorem 7]{HWX}.
We have $s=10$, $\omega_-=-3$, $\omega_+=3$, $\sL_{10}=0$ and $\sU_{10}=2$.  The linear approximation scheme retrieves what appears to be a stationary distribution, see Figure \ref{fig:fig5}. The numerical computations confirm that the limit in \eqref{eq:lim2} in Theorem \ref{th:upward} is zero, pointing to the stationary distribution being unique.
\end{example}

\begin{figure}[h!]
\begin{center}
\includegraphics[width=4in]{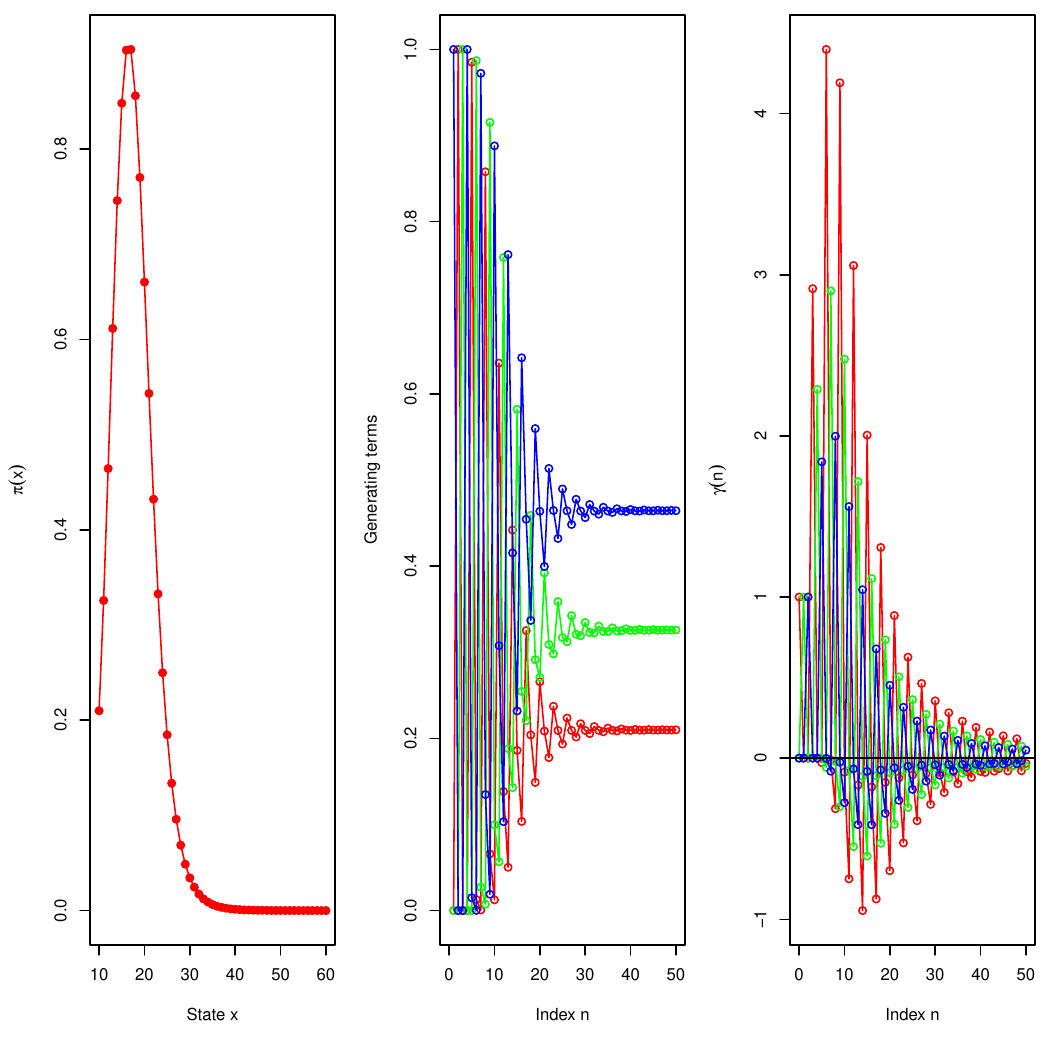}
\caption{The stationary measure of an explosive SRN, the generating terms and the coefficients $\gamma_j^s(\ell)$.  }
\label{fig:fig5}
\end{center}
\end{figure}
 
    We end with an example for which there exists more than one stationary measure.

\begin{example}\label{ex:nonunique}
Consider the SRN with reactions $0\ce{->[\lambda_1]} \tS$, $2\tS\ce{->[\lambda_{-2}]} 0$, and 
\begin{align*}
\lambda_1(0)&=\lambda_1(1)=1,\quad \,\,\,\,\lambda_1(x)=\lfloor\tfrac{x}{2}\rfloor^2,\quad x\ge 2,\\
\lambda_{-2}(0)&=\lambda_{-2}(1)=0,\quad \lambda_{-2}(x)=\lfloor\tfrac{x+1}{2}\rfloor^{-2},\quad x\ge 2,
\end{align*} 
 Then, ({\bf A1})-({\bf A3}) are fulfilled  with $\om_-=-2$, $\om_+=1$, $s=0$, $\sL_0=0$, $\sU_0=1$ (so $\pi_0=\pi$). From Corollary \ref{cor:fraction} with $x=\sU_0+2=3$,
$$Q_n(3)=\prod_{i=0}^n\frac{\lambda_1(2+2i)\lambda_{-2}(3+2i)}{\lambda_1(3+2i)\lambda_{-2}(4+2i)}=1,$$
$$H(3)=\sum_{n=0}^\infty \frac{1}{(n+1)^2}+ \frac1{(n+2)^2}=-1+\sum_{n=1}^\infty \frac{2}{n^2}=\frac{\pi^2}3-1<\infty.$$
Consequently, there is not a unique stationary measure of $(\Omega,\cF)$ on $\N_0$.   Numerical computations suggest that $[\Psi_2(3),\Psi_1(3)]\approx [1.5351, 2.6791],$ 
using \eqref{Eq-recurrence} with $k=700$. See Corollary \ref{cor:fraction} for a definition of $\Psi_1,\Psi_2$.

In the following, we will explore the stationary measures using the theory of Section \ref{sec:skipup} and compare it to results obtained from the linear approximation scheme. For any stationary measure $\pi$, by  definition of $\phi(x)$  and   $\phi(x)<h(x)$, we have for $x\ge 3$,
\begin{align*}
\frac{\pi(x)}{\pi(x-1)}=&\frac{\lambda_{-2}(x-1)}{\lambda_{-2}(x)}\frac{1}{\phi(x)}
>\frac{\lambda_{-2}(x-1)}{\lambda_{-2}(x)}\frac{1}{h(x)}=\left(\Big\lfloor\frac{x-1}{2}\Big\rfloor\Big\lfloor\frac{x+1}{2}\Big\rfloor\right)^{\!2},
\end{align*} 
hence
$$\pi(x)> C \frac{(x!)^4}{16^x},\quad x\ge 0,$$
for some constant $C>0$, and $\pi$ is not a distribution. This is illustrated in Figure \ref{fig:fig6} (top left) that shows the logarithm of $\pi(x)$ with $\phi^*=2.67$, using Corollary \ref{cor:fraction}.  The red line is a fitted curve to $\log(\pi(x))$ for the  \emph{even} states: $\log(\pi(2x))\approx 3.504+2.009 x\log(x)-3.429x$, $x\ge2$. The errors between true and predicted values are numerically smaller than 0.1 for all even states. Hence, the  measure appears to  grow super-exponentially fast. The function $\log(\pi(x))$ for the odd states grows  at a comparable rate as that for the even  states, but not with the same regularity. 

Additionally, we computed the difference in $\log(\pi(x))$ for  different values of $\phi^*$, showing again a distinction between odd and even states, see Figure \ref{fig:fig6} (top right). 

We used the linear approximation scheme to estimate the generating terms (which we know are not uniquely given in this case). Aslo here, an alternating pattern emerges with even indices producing the generator $(\pi(\sL_0),\pi(\sU_0))\approx (0.3764, 0.6235)$ ($n=100$), while odd indices producing the generator $\approx (0.4222, 0.5777)$ ($n=101$).  For each $n$, a unique solution is found. Computing   the corresponding $\phi^*=\phi(\sU_0+2)$ yields another approximation to $[\Psi_2(3),\Psi_1(3)]$, namely  $[1.5240, 2.7161]$, 
which is slightly larger than the previous approximation, $[1.5351, 2.6791]$. 
 By nature of the latter estimate, the first coordinate is increasing in  $n$, while the second is decreasing in $n$, hence the latter smaller  interval is closer to the true interval  $[\Psi_2(3),\Psi_1(3)]$, than the former. Figure \ref{fig:fig6} (bottom left)  also shows the estimated generating terms for different values of $n$, providing a band for $\pi(\sL_0)$ and $\pi(\sU_0)$ for which stationary measures exist, in agreement with Lemma \ref{le:stationary-measure}.

Finally, we computed the ratio of $\pi(n)$ to $\|\gamma(n)\|_1$ for different values of $\phi^*$, and observe that there appears to be one behavior for odd states and one for even. While one cannot infer the large state behavior of the  ratios in Figure \ref{fig:fig6}  (bottom right) from the figure, it is excluded by non-uniqueness of the measures,  that they both converges to zero, see Lemma  \ref{le:stationary-measure}.
\end{example}

\begin{figure}[h!]
\begin{center}
\includegraphics[width=4in]{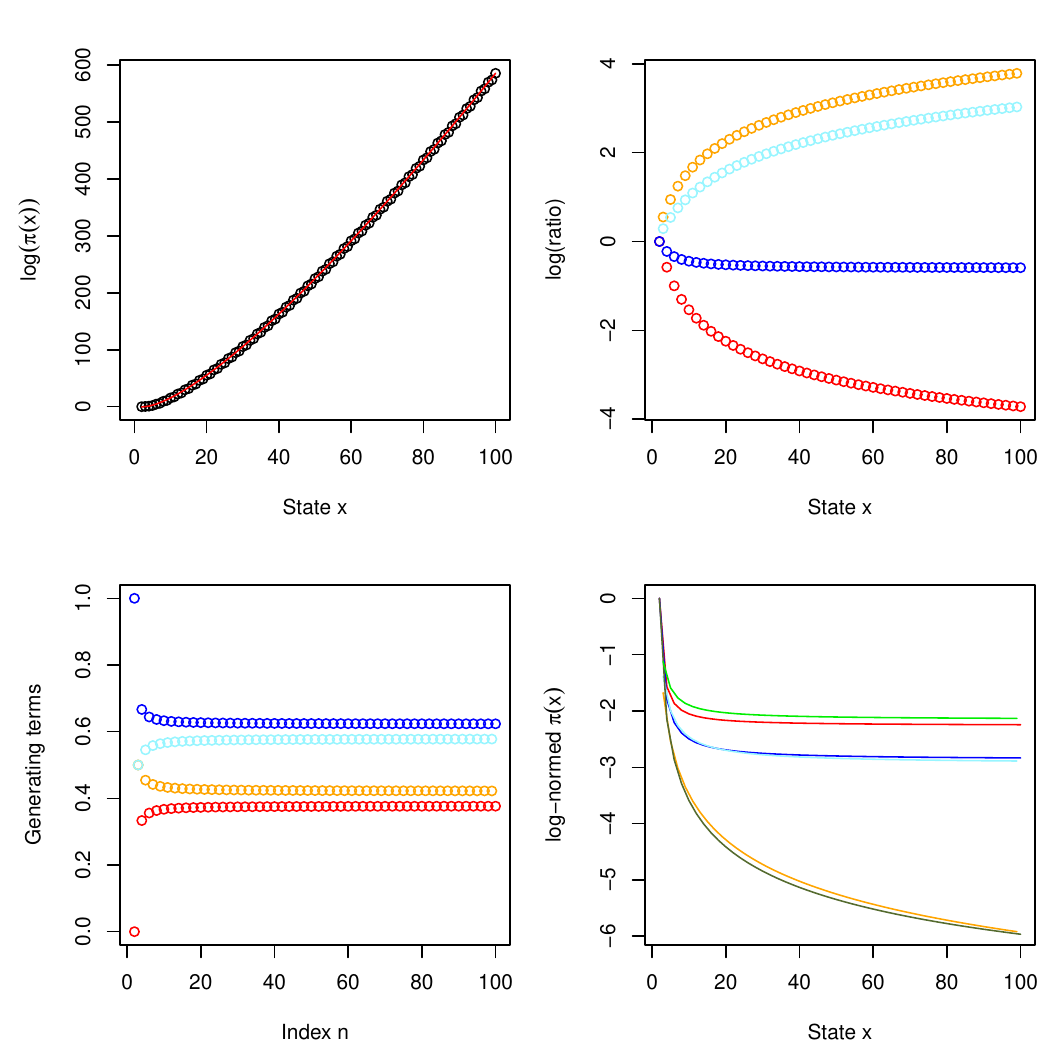}
\caption{Top left:  The  logarithm of the stationary measure with $\phi^*=2.67$. This value is in the upper end of the estimated interval $[\Psi_2(3),\Psi_1(3)]$,. The red line is the curve  fitted to the log-measure of the even states, using regression analysis. The stationary measures retrieved using the linear approximation scheme  in Section \ref{sec:convex} is visually identical to the plotted measure, not shown. Top right: The difference in $\log(\pi(x))$ between  the measure with $\phi^*=2.00$ (in the middle part of the interval) to that with $\phi^*=2.67$ (blue: even states, light blue: odd states),  and the log-measure between the measure with $\phi^*=1.54$ (in the lower end of the interval) to that with $\phi^*=2.67$  (red: even states, orange: odd states). An alternating pattern emerges. Bottom left: The generating terms estimated for different values of $n$, with red (even states)/orange (odd states) showing $\pi(\sL_0)$ and blue (even states)/light blue (odd states) showing $\pi(\sU_0)$. Bottom right: The normed measure, $\pi(n)/\|\gamma(n)\|_1$ for different values of $\phi^*$: $2.67$ (red: even states, orange: odd), $2.00$ (blue: even, light blue: odd) and $1.54$ (olive: even, green: odd). Dots are replaced by lines for visual reasons. }
\label{fig:fig6}
\end{center}
\end{figure}

\begin{example}\label{Ex-signed-stationary-measure} 
Consider the SRN with non-mass-action kinetics:
\[0\ce{<=>[\lambda_1][\lambda_{-1}]}\tS,\quad 2\tS\ce{->[\lambda_{-2}]}0\]
with transition rates given by:
\begin{align*}
\lambda_1(x) &= 2^x,\quad x\ge 0,\\
\lambda_{-1}(0)&=0,\quad \lambda_{-1}(1)=2,\quad \lambda_{-1}(x) = 2\cdot 4^{x-1} - 2^x,\quad x\ge 2,\\
\lambda_{-2}(0)& = \lambda_{-2}(1)=0,\quad \lambda_{-2}(x) = 2\cdot 4^{x-1},\quad x\ge 2.
\end{align*}
 For this SRN, $\omega_- = -2$, $\omega_+ = 1$, $s=0$,  $\sL_0=0$ and $\sU_0=1$. Hence, the conditions of  Proposition~\ref{pro-1} are satisfied. The underlying CTMC is from \cite{M63},  where it is shown that $\nu(x) = (-1/2)^{x+1}$ for $x\in\mathbb{N}_0$ is a signed invariant measure.  This measure  has generator $(\nu(0),\nu(1))=(-1/2,1/4)$. The space of signed invariant measures is $\sU_0-\sL_0+1=2$ dimensional, and a second linearly independent signed invariant measure has generator $(1,0)$. On the other hand, by the Foster-Lyapunov criterion \cite{meyn},  the process is positive recurrent, and hence admits a stationary distribution. Numerical computations show that the stationary distribution is concentrated on the first few states $x=0,\ldots, 3$. The uniqueness of the stationary distribution is also confirmed by  Corollary~\ref{cor:fraction} in that $H(\sU+2)=H(3)=\infty$ by a simple calculation.
\end{example}

\appendix

\section{Appendix}

\begin{lemma}\label{lem:ck}
Assume \rm{({\bf A1})}-\rm{({\bf A3})}. The following hold:
\begin{itemize}
\item[i)] $B_0=\{\om_-\}$, $\om_-\in B_k$ for $0\le k<-\om_-$, and $\om_+\in B_k$ for $\om_-\le k\le m_*$.
\item[ii)]  $c_0(\ell)<0$ for $\ell>\sU$ and $c_0(\ell)=0$ for $\ell\le \sU$.
\item[iii)] $c_k(\ell)>0$ for $-\om_-\le k\le m_*$ and $\ell\ge \si_{\om_+}$. Generally, $c_k(\ell)\ge0$, $\ell\in\N_0$.
\item[iv)]  $c_k(\ell)<0$ for $0\le k< -\om_-$ and $\ell\ge \si_{\om_-}=\sU+1$.  Generally, $c_k(\ell)\le0$, $\ell\in\N_0$.
\item[v)] If $c_k(\ell)>0$, then $c_k(\ell+1)>0$, and similarly, if $c_k(\ell)<0$, then $c_k(\ell+1)<0$ for $k\in\{0,\ldots,m_*\}$ and $\ell\in\N_0$.
\item[vi)] $f_k(\ell)>0$ for $-\om_-\le k\le m_*$ and $\ell-k\ge \si_{\om_+}$. 
\item[vii)]  $f_k(\ell)<0$ for $0\le k< -\om_-$ and $\ell-k\ge \si_{\om_-}=\sU+1$.
\item[viii)] If $f_k(\ell)>0$, then $f_k(\ell+1)>0$, and similarly, if $f_k(\ell)<0$, then $f_k(\ell+1)<0$ for $k\in\{0,\ldots,m_*\}$ and $\ell>\sU$.
\end{itemize}
\end{lemma}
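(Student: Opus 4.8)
The plan is to dispatch the eight parts in the order stated, since each builds on its predecessors; the whole lemma is an exercise in unwinding the definitions of $B_k$, $c_k$ and $f_k$ once the combinatorial shape of $B_k$ is pinned down. I would begin with (i), which is the linchpin. Writing $k'=\om_-+k+\tfrac12$, I note $k'$ is a half-integer, so $k'>0$ exactly when $\om_-+k\ge 0$, i.e. $k\ge -\om_-$, and $k'<0$ otherwise. The defining inequality $k'(\om-k')>0$ then unwinds, using that $\om\in\Z$ while $k'\notin\Z$, to $\om\ge \om_-+k+1$ when $k'>0$ and to $\om\le \om_-+k$ when $k'<0$. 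Thus $B_k$ is an upper set for $-\om_-\le k\le m_*$ and a lower set for $0\le k<-\om_-$. Since $\Omega=\Omega_+\cup\Omega_-$ gives $\om_-=\min\Omega$ and $\om_+=\max\Omega$, the claims in (i) read off at once: $B_0=\{\om\le\om_-\}\cap\Omega=\{\om_-\}$; $\om_-\in B_k$ for every lower-set index; and for an upper set $\om_+\in B_k$ iff $\om_+\ge \om_-+k+1$, i.e. iff $k\le m_*$, so $\om_+\in B_k$ precisely for the upper-set indices $-\om_-\le k\le m_*$.

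Parts (ii)--(iv) then follow by substituting these descriptions into $c_k(\ell)=\text{sgn}(k')\sum_{\om\in B_k}\lambda_\om(\ell)$ and invoking (A2), namely $\lambda_\om(\ell)>0$ iff $\ell\ge\si_\om$. For (ii), (i) gives $c_0(\ell)=-\lambda_{\om_-}(\ell)$, and $\lambda_{\om_-}(\ell)>0$ iff $\ell\ge\si_{\om_-}=\sU+1$, yielding the stated sign pattern. For (iii) the prefactor is $+1$ and $\om_+\in B_k$, so $c_k(\ell)\ge\lambda_{\om_+}(\ell)>0$ once $\ell\ge\si_{\om_+}$, while non-negativity for every $\ell$ is automatic as $c_k$ is then a sum of non-negative terms. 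Part (iv) is the mirror image, using the prefactor $-1$ together with $\om_-\in B_k$.

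For (v), the key observation is that (A2) makes each support upward-closed: $\lambda_\om(\ell)>0$ forces $\ell\ge\si_\om$, hence $\lambda_\om(\ell+1)>0$. Since $c_k$ is a signed sum of such $\lambda_\om$, a strict sign at $\ell$ is produced by some term that stays strictly positive at $\ell+1$, so that sign persists. Finally (vi)--(viii) concern $f_k(\ell)=-c_k(\ell-k)/c_0(\ell)$ on the domain $\ell>\sU$; by (ii), $c_0(\ell)<0$ there, so $f_k(\ell)$ carries the same sign as $c_k(\ell-k)$. Then (vi) and (vii) are just (iii) and (iv) read at the argument $\ell-k$, and (viii) is (v) transported through this sign identity (noting $\ell+1>\sU$ keeps $c_0$ negative).

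The routine-but-error-prone part --- the one I would watch most carefully --- is not a single deep step but the bookkeeping of ranges: correctly matching the half-integer threshold $k=-\om_-$ to the switch between lower and upper sets in (i), and then, for the $f_k$ statements, simultaneously respecting the domain constraint $\ell>\sU$ and the shifted argument $\ell-k$ when transferring the sign conclusions of (iii)--(v). Getting the boundary index $k=-\om_-$ right is exactly what makes the positive/negative dichotomy of (iii)--(iv) and (vi)--(vii) line up.
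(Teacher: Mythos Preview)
Your proposal is correct and follows essentially the same route as the paper: unwind the definition of $B_k$ via the sign of $k'=\om_-+k+\tfrac12$, extract the sign pattern of $c_k$ from (A2), and push it through to $f_k$ using $c_0(\ell)<0$ for $\ell>\sU$. Your explicit description of $B_k$ as a lower set of $\Omega$ (for $0\le k<-\om_-$) or an upper set (for $-\om_-\le k\le m_*$) is slightly more than the paper records---the paper only checks membership of $\om_-$, respectively $\om_+$---but this is an expository refinement, not a different argument.
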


\begin{proof}
i) Since $\om_-\le-1$, we have from \eqref{eq:Bk},
$$B_0=\{\om\in\Omega\,|\,(\om_-+1/2)(\om -\om_--1/2)>0\}=\{\om\in\Omega\,|\, \om \le\om_-\}=\{ \om_-\}.$$
For $-\om_-\le k\le m_*$, we have $k'=\om_-+k+1/2>0$ and $\om-k'\ge \om-(\om_-+m_*+1/2)=\om-\om_++1/2>0$ if $\om=\om_+$. Hence, $\om_+\in B_k$. For $0\le k<-\om_-$, we have $k'=\om_-+k+1/2<0$ and  $\om-k'\le\om-(\om_-+1/2)<0$ if $\om=\om_-$. Hence, $\om_-\in B_k$. 
ii) Since $\text{sgn}(\om_-+1/2)=-1$, then for $\ell>\sU=\si_{\om_-}-1 $, we have 
$c_0(\ell)=-\lambda_{\om_-}(\ell)<0$, see \eqref{eq:c}. Likewise, for $\ell\le \sU$. iii) and iv) The sign follows similarly to the proof of ii). Using i), the conditions on $\ell$ ensure $\lambda_{\om_+}(\ell)>0$ and $\lambda_{\om_-}(\ell)>0$, respectively, yielding the conclusions. v) It follows from \rm{({\bf A2})}. vi)-viii) Similar to iii)-iv) using \eqref{eq:f} and \eqref{eq:Bk}.
\end{proof}

For a matrix $D=(d_{ij})\in\R^{n\times n}$, the $j$-th column, $j=1,\ldots,n$, is {\em weakly  diagonally dominant} (WDD)   if $|d_{jj}|\ge\sum_{i\neq j}|d_{ij}|$ and \emph{strictly diagonally dominant} (SDD) if $|d_{jj}|>\sum_{i\neq j}|d_{ij}|$. In particular, a matrix is WDD if all columns are WDD. A WDD matrix is weakly chain diagonally dominant (WCDD) if for each WDD column, say $j$, which is not SDD, there exists an SDD column $k$ such that there is a directed path from  vertex $k$ to  vertex $j$ in the associated digraph. Every WCDD matrix is non-singular \cite{A18}.

\begin{lemma}\label{lem:echelon}
Assume \rm{({\bf A1})}-\rm{({\bf A3})}. Then, the row echelon form $G$ of $H$, as defined in \eqref{eq:H}, exists.
\end{lemma}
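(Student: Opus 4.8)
The plan is to reduce the statement to the invertibility of $H_1$ and then to establish that via the weakly chain diagonally dominant (WCDD) criterion recorded just above the lemma. Since $H$ is the empty matrix when $\sL=0$ or $\sU=\sL$, in those cases there is nothing to do, so I would assume $\sL\ge 1$ and $\sU>\sL$. The key reduction is that the reduced row echelon form of $H=(H_1\,\,H_2)$ equals $(I_\sL\,\,-G)$ with $G=-(H_1)^{-1}H_2$ \emph{precisely when} $H_1$ is non-singular; hence it suffices to prove that $H_1$ is invertible. The first substantive move is a left row-scaling that clears the state-dependent denominators in \eqref{eq:H}. Put $S_m=\sum_{\om\in\Omega}\lambda_\om(m)$, which is strictly positive for every $m\in\N_0$ since $\N_0$ is a PIC and so has no absorbing states (as already used in the proof of Theorem~\ref{th-1}). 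With $D=\mathrm{diag}(S_0,\ldots,S_{\sL-1})$ invertible, a direct computation from \eqref{eq:H} gives $(DH_1)_{m,n}=S_m\delta_{m,n}-\lambda_{m-n}(n)=-q_{n,m}$, so that $DH_1=-(Q_J)^{T}$ is the negated transpose of the principal submatrix $Q_J=(q_{x,y})_{x,y\in J}$ of the $Q$-matrix on $J:=\{0,\ldots,\sL-1\}$. As $H_1$ is invertible iff $DH_1$ is, the task becomes showing $DH_1$ is non-singular through \cite{A18}.

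Next I would check that $DH_1$ is WDD column by column: the diagonal entry of column $n$ is $S_n$, while the off-diagonal column sum is $\sum_{m\in J\setminus\{n\}}\lambda_{m-n}(n)$, i.e. the total rate of jumps from $n$ into $J\setminus\{n\}$. Being a partial sum of $S_n=\sum_{\om\in\Omega}\lambda_\om(n)$, it is $\le S_n$, with equality exactly when every active jump out of $n$ lands inside $J$. Thus column $n$ is SDD if and only if state $n$ admits a jump leaving $J$, necessarily upward to a state $\ge\sL$. Assumption $({\bf A3})$ forces $\si_+=\max\{\si_+,\so_-\}=0$, so some forward jump is active at every state, in particular at $\sL-1$, carrying it to a state $\ge\sL$; hence column $\sL-1$ is SDD and at least one SDD column exists.

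The central step is the chain condition. In the associated digraph of $DH_1$ there is an edge $u\to v$ whenever $(DH_1)_{uv}=-q_{v,u}\neq 0$, i.e. whenever the chain jumps from $v$ to $u$; consequently a directed path from an SDD column $k$ to a column $j$ corresponds to a path of the chain from $j$ to $k$ using only states of $J$. So I must show that every column $j$ which is WDD but not SDD can reach an SDD column within $J$. I would argue by contradiction using irreducibility: let $R\subseteq J$ be the set of states from which no SDD state of $J$ is reachable through states of $J$. Any $j\in R$ is itself non-SDD, so all of its jumps stay in $J$, and each such jump must in fact land again in $R$ (otherwise $j$ would reach an SDD state). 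Hence $R$ is closed under all transitions of the chain on $\N_0$. Since $\N_0$ is a single PIC, hence irreducible, its only closed sets are $\emptyset$ and $\N_0$; as $R\subseteq J$ is a proper subset, $R=\emptyset$. Therefore every non-SDD column reaches an SDD column within $J$, so $DH_1$ is WCDD and non-singular by \cite{A18}, whence $H_1$ is invertible and $G$ exists.

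The main obstacle is organisational rather than deep. The matrix $H_1$ is \emph{not} itself column-diagonally dominant, because of the state-dependent denominators $\sum_{\om}\lambda_\om(m)$ in \eqref{eq:H}; the essential trick is therefore the row-scaling by $D$, which both clears those denominators and reveals $DH_1$ as (minus the transpose of) a sub-generator, after which the WDD property is immediate. The only genuinely substantive point is the chain condition, and I expect the closed-set/irreducibility argument to be the cleanest route. The one place to take care is the identification of ``non-SDD'' with ``no rate leaks out of $J$'': this is exactly the equality case of the WDD inequality, and it is what guarantees that the set $R$ is closed in the \emph{full} chain rather than merely in the restricted one, which is what allows irreducibility to be invoked.
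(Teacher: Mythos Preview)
Your proof is correct and shares the overall framework with the paper: both reduce to invertibility of $H_1$, rescale by the diagonal of total exit rates to expose (minus the transpose of) the sub-generator on $J=\{0,\ldots,\sL-1\}$, observe that each column is WDD with SDD precisely when some rate leaks out of $J$, and then invoke the WCDD criterion of \cite{A18}. The paper in fact performs the two-sided similarity $A=D(-H_1)D^{-1}$ rather than your one-sided $DH_1$, but since right-scaling by a positive diagonal preserves both column dominance and the zero pattern of the digraph, the WCDD analyses coincide.

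The genuine difference lies in how the chain condition is established. The paper isolates a separate combinatorial lemma (Lemma~\ref{le-1}) that exploits the specific monotonicity built into $({\bf A2})$, namely $\lambda_\om(x)>0\Leftrightarrow x\ge\si_\om$, which yields $a_{m,n}>0\Rightarrow a_{m+1,n+1}>0$; from this one deduces that a non-SDD column forces all earlier columns to be non-SDD as well, and the last column is shown to be SDD. Your argument bypasses this structure entirely: you use only that $\N_0$ is a single irreducible class under $({\bf A3})$, and observe that the set $R$ of states in $J$ that cannot reach an SDD state within $J$ is closed under \emph{all} transitions of the chain (since non-SDD means no leakage out of $J$), hence empty by irreducibility. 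Your route is shorter, uses strictly less of the standing hypotheses, and would apply verbatim to any irreducible chain with finitely many jump vectors even without the threshold form of $({\bf A2})$; the paper's route, on the other hand, makes the order structure of the state space visible and yields slightly more, namely that the SDD columns form a terminal segment.
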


\begin{proof}
Recall that
\begin{align*}
H_{m,n}=\delta_{m,n}-\frac{\lambda_{(m-n)}(n)}{\sum_{\omega\in \Omega}\lambda_{\omega}(m)},\quad m=0,\ldots,\sL-1,\quad n=0,\ldots,U-1.
\end{align*}
Note the following: if $\om<-m$, then $m<\si_\om$ and hence  $\lambda_{\om}(m)=0$. Hence, we might replace $\sum_{\omega\in \Omega}\lambda_{\omega}(m)$ by $\sum_{k=-m}^\infty\lambda_{k}(m)$ in $H_{n,m}$.
 
Define $a_{m,n}=\lambda_{(m-n)}(n)$ for $n,m\in\Z$. Then the row echelon form $-H$ restricted to its first $\sL$ columns  is invertible, that is, if
\[\begin{pmatrix}\vspace{1em}
  1&-\frac{a_{0,1}}{\sum_{k=0}^\infty a_{k,0}}&\ldots&-\frac{a_{0,\sL-1}}
  {\sum_{k=0}^\infty a_{k,0}}\\ \vspace{1em}
  -\frac{a_{1,0}}{\sum_{k=0}^\infty a_{k,1}}&1&\ldots&-\frac{a_{1,\sL-1}}
  {\sum_{k=0}^\infty a_{k,1}}\\ \vspace{1em}
  \vdots&\vdots&\ddots&\vdots\\ \vspace{1em}
  -\frac{a_{\sL-1,0}}{\sum_{k=0}^\infty a_{k,\sL-1}}&-\frac{a_{\sL-1,1}}{\sum_{k=0}^\infty
  a_{k,\sL-1}}&\ldots&1
\end{pmatrix}\]
is invertible.  Multiplying the above matrix by  the invertible diagonal matrix
$${\rm diag}\lt(\sum_{k=0}^\infty a_{k,0},\sum_{k=0}^\infty a_{k,1},\ldots,
\sum_{k=0}^\infty a_{k,\sL-1}\rt)$$
 on the left side and its inverse on the right side gives a column diagonally dominant matrix
 \[A=\begin{pmatrix}\vspace{1em}
  1&-\frac{a_{0,1}}{\sum_{k=0}^\infty a_{k,1}}&\ldots&-\frac{a_{0,\sL-1}}
  {\sum_{k=0}^\infty a_{k,\sL-1}}\\ \vspace{1em}
  -\frac{a_{1,0}}{\sum_{k=0}^\infty a_{k,0}}&1&\ldots&-\frac{a_{1,\sL-1}}
  {\sum_{k=0}^\infty a_{k,\sL-1}}\\ \vspace{1em}
  \vdots&\vdots&\ddots&\vdots\\ \vspace{1em}
  -\frac{a_{\sL-1,0}}{\sum_{k=0}^\infty a_{k,0}}&-\frac{a_{\sL-1,1}}{\sum_{k=0}^\infty   a_{k,1}}&\ldots&1
\end{pmatrix}.\]
Note that $\sum_{k=1}^\infty a_{k,0}=\sum_{k=1}^\infty \lambda_{k}(0)>0$.   Furthermore, by ($\rm\mathbf{A2}$) the following property holds:
$a_{m,n}>0$ implies $a_{m+1,n+1}>0$ for  any $m,n\in\Z$, or by contraposition, $a_{m+1,n+1}=0$ implies $a_{m,n}=0$ for any $m,n\in\Z$.
Hence,
$$\sum_{k=1}^\infty a_{k+\sL-1,\sL-1}=\sum_{k=\sL}^\infty a_{k,\sL-1}>0,$$
 and therefore,
$$\sum_{k=0}^{\sL-1}a_{k,\sL-1}< \sum_{k=0}^\infty a_{k,\sL-1}.$$
Consequently, the $\sL$-th column sum of $A$ is positive, implying   $A$ is WDD matrix  with SDD column $\sL$.

By Lemma~\ref{le-1}, $A$ is invertible, and hence the row reduced echelon form exists.
\end{proof}

\begin{lemma}\label{le-1}
Let $n\in\N$  and assume $D$ is an $n\times n$ matrix, such that
\[ D=\begin{pmatrix}\vspace{1em}
  1&-\frac{d_{0,1}}{\sum_{k=0}^\infty d_{k,1}}&\ldots&-\frac{d_{0,n-1}}
  {\sum_{k=0}^\infty d_{k,n-1}}\\ \vspace{1em}
  -\frac{d_{1,0}}{\sum_{k=0}^\infty d_{k,0}}&1&\ldots&-\frac{d_{1,n-1}}
  {\sum_{k=0}^\infty d_{k,n-1}}\\ \vspace{1em}
  \vdots&\vdots&\ddots&\vdots\\ \vspace{1em}
  -\frac{d_{n-1,0}}{\sum_{k=0}^\infty d_{k,0}}&-\frac{d_{n-1,1}}{\sum_{k=0}^\infty d_{k,1}}
  &\ldots&1
\end{pmatrix},\]
where $d_{i,j}\ge0$ for $i\in\N_0$, $j=0,\ldots,n-1$,   $\sum_{i=1}^\infty d_{i,0}>0$, and $d_{i+1,j+1}=0$ implies $d_{i,j}=0$ for any $i\in\N_0$, $j=0,\ldots,n-2$. Then $D$ is WCDD, and thus non-singular.
\end{lemma}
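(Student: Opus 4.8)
The plan is to verify directly that $D$ meets the two defining requirements of a weakly chain diagonally dominant (WCDD) matrix — that every column is weakly diagonally dominant (WDD), and that every column failing to be strictly diagonally dominant (SDD) is reachable by a directed path from an SDD column — and then invoke the cited fact that every WCDD matrix is non-singular \cite{A18}. First I would extract a positivity consequence of the hypotheses. Since $\sum_{i=1}^\infty d_{i,0}>0$, there is an index $i_0\ge 1$ with $d_{i_0,0}>0$. Iterating the contrapositive of the stated assumption, namely ``$d_{i,j}>0\Rightarrow d_{i+1,j+1}>0$'', along a diagonal gives $d_{i_0+j,\,j}>0$ for every $j=0,\ldots,n-1$. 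In particular each denominator satisfies $\sum_{k=0}^\infty d_{k,j}\ge d_{i_0+j,j}>0$, so all entries of $D$ are well defined.

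Next I would check WDD and characterise the SDD columns. The off-diagonal mass of column $j$ is
$$\sum_{i\ne j}|D_{ij}|=\frac{\sum_{0\le i\le n-1,\ i\ne j} d_{i,j}}{\sum_{k=0}^\infty d_{k,j}}\le 1=|D_{jj}|,$$
because the numerator is a partial sum of the non-negative denominator; hence $D$ is WDD. Moreover column $j$ is SDD precisely when $d_{j,j}+\sum_{k\ge n} d_{k,j}>0$, so it suffices that the diagonal-propagated entry $d_{i_0+j,j}$ lie in the tail, i.e. $i_0+j\ge n$. Thus every column $j\ge n-i_0$ is SDD; in particular the last column $j=n-1$ is always SDD.

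Finally I would establish the chain condition. The associated digraph has an edge $a\to b$ iff $D_{ab}\ne 0$, i.e. iff $d_{a,b}>0$; the positive entries $d_{i_0+j,j}$ with $i_0+j\le n-1$ thus yield edges $v\to v-i_0$ for every $v\in\{i_0,\ldots,n-1\}$. Fix a column $j$ and let $c^*$ be the largest element of $\{0,\ldots,n-1\}$ congruent to $j$ modulo $i_0$; since $c^*+i_0\ge n$ we have $c^*\ge n-i_0$, so $c^*$ is SDD, and subtracting $i_0$ repeatedly produces a directed path $c^*\to c^*-i_0\to\cdots\to j$ whose source vertices all lie in $\{i_0,\ldots,n-1\}$, making every edge valid. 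Hence every column (in particular every non-SDD one) is reachable from an SDD column, so $D$ is WCDD and \cite{A18} yields non-singularity.

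The step I expect to be the main obstacle is pinning down the correct orientation of the associated digraph: because the dominance here is by \emph{columns} rather than rows, one must take care that an edge $a\to b$ corresponds to $D_{ab}\ne 0$ (equivalently, apply the row-based WCDD theory to $D^{T}$), so that the propagated diagonal entries $d_{i_0+j,j}$ furnish paths running \emph{from} the SDD columns in the top block \emph{down to} the remaining columns. Once the orientation and this residue-class reachability argument are set up correctly, the remaining estimates are routine.
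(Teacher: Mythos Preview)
Your argument is correct. You extract a single index $i_0\ge 1$ with $d_{i_0,0}>0$, propagate it along the diagonal to get $d_{i_0+j,j}>0$ for all $j$, read off from this both that the denominators are positive and that every column $j\ge n-i_0$ is SDD, and then build explicit directed paths $c^*\to c^*-i_0\to\cdots\to j$ inside the digraph of $D$ from an SDD column $c^*$ to any given column $j$ by stepping down in increments of $i_0$. The orientation you use (edge $a\to b$ when $D_{ab}\neq 0$, with the chain condition asking for a path \emph{from} an SDD column \emph{to} the non-SDD one) matches the paper's column-based formulation, which is the transpose of the usual row-based WCDD statement in \cite{A18}; your remark about applying the row theory to $D^T$ is exactly the right way to see this.

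This is a different route from the paper's. The paper argues contrapositively: it shows (Fact~1) that if column $j$ fails to be SDD then so does column $j-1$, so the non-SDD columns form an initial segment $\{0,\dots,j^*\}$, and (Fact~2) that this forces $d_{i,k}=0$ whenever $i-k\ge n-j^*$, a vanishing lower-left triangle. The paper's write-up stops after stating these two facts; one must still combine them with the hypothesis $\sum_{i\ge 1}d_{i,0}>0$ to extract a positive subdiagonal entry and produce the required paths. Your argument is more constructive and self-contained: rather than first locating the non-SDD block and then deducing structure, you go straight from the single positive entry $d_{i_0,0}$ to explicit SDD columns and explicit chains. The paper's Facts give slightly more structural information (the non-SDD columns are contiguous and sit at the left), while your approach makes the WCDD verification entirely transparent.
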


\begin{proof}
Number rows and columns from $0$ to $n-1$. If $n=1$, then the statement is trivial. Hence assume $n>1$.

Fact 1: If column $j>0$ sums to zero, then column $j-1$ sums to zero. Indeed, that column $j$ sums to zero is equivalent to
$\sum_{i=n}^\infty d_{i,j}=0$, which by the property of $d_{i,j}$, implies that \begin{equation}\label{*}
\sum_{i=n}^\infty d_{i-1,j-1}=0.
\end{equation} Hence also  $\sum_{i=n}^\infty d_{i,j-1}=0$ and column $j-1$ sums to zero. Consequently, if column $j$ is WDD, but not SDD, then all columns $0,\ldots,j$ are WDD, but not SDD.

Fact 2: If column $j>0$ sums to zero, then from \eqref{*} it holds that $ d_{n-1,j-1}=0$. Inductively, using in addition Fact 1, $d_{i,k}=0$ for $i-k\ge n-j$, corresponding to a lower left triangular corner of size $j$.
\end{proof}

\section*{Acknowledgements}

The second author was supported by the Novo Nordisk Foundation (Denmark), grant NNF19OC0058354. 
The third author was supported by  TUM University Foundation, the Alexander von Humboldt Foundation, Simons Foundation, and a start-up funding from the University of Hawai'i at M\={a}noa.

The work was initiated when all authors were working at the University of Copenhagen. 
\red{We are grateful to the AE and   two anonymous reviewers whose comments and suggestions improved the presentation of the manuscript.}

The authors declare no competing interests.


\begin{thebibliography}{99}
\bibliographystyle{APT}
\footnotesize
 


 \bibitem{AS72}
{\sc Abramowitz, M.  and  Stegun, I.A.}
\newblock {\em \textit{Handbook of Mathematical Functions with Formulas,
  Graphs, and Mathematical Tables}}, volume~{55} of {\em Applied Mathematics
  Series}.
\newblock National Bureau of Standards, 1972.

\bibitem{ACK10}
{\sc Anderson, D.F, Craciun, G. and Kurtz, T.G.}
\newblock Product-form stationary distributions for deficiency zero chemical
  reaction networks.
\newblock {\em \textit{Bulletin Mathematical Biology}}, {\bf 72}:1947--1970, 2010.

\bibitem{AK15}
{\sc Anderson, D.F. and Kurtz, T.G.}
\newblock {\em \textit{Stochastic Analysis of Biochemical Systems}}, volume 1.2
  of {\em Mathematical Biosciences Institute Lecture Series}.
\newblock Springer International Publishing, Switzerland, 2015.

\bibitem{A91}
{\sc Anderson, W.J.}
\newblock {\em \textit{Continuous-Time Markov Chains: An Applications--Oriented
  Approach}}.
\newblock Springer Series in Statistics: Probability and its Applications.
  Springer-Verlag, New York, 1991.

\bibitem{A18}
{\sc Azimzadeh, P.}
\newblock {A fast and stable test to check if a weakly diagonally dominant
  matrix is a nonsingular M-matrix}.
\newblock {\em Mathematics of Computation}, {\bf 88}(316):783--800, 2019.

\bibitem{boyd2004convex}
{\sc Boyd, S.P. and Vandenberghe, L.}
\newblock {\em Convex Optimization}.
\newblock Cambridge University Press, 2004.

\bibitem{D55}
{\sc Derman, C.}
\newblock Some contributions to the theory of denumerable {M}arkov chains.
\newblock {\em Transactions of the American Mathematical Society},
  {\bf 79}(2):541--555, 1955.

\bibitem{positivepart}
{\sc Douc, R.,  Moulines, E.,  Priouret, P. and Soulier, P. }
\newblock {\em Markov Chains}.
\newblock Springer Series in Operations Research and Financial Engineering. Springer  2018.

\bibitem{ECM07}
{\sc Erban, R., Chapman, S.J. and Maini, P.K.}
\newblock A practical guide to stochastic simulations of reaction-diffusion
  processes.
\newblock {\em \textit{arXiv:0704.1908}}, 2017.

\bibitem{EK}
{\sc Ethier, S.N. and Kurtz, T.G.}
\newblock {\em Markov Processes}.
\newblock Wiley Series in Probability and Mathematical Statistics. Wiley, 1986.

\bibitem{FMD17}
{\sc Falk, J., Mendler, M. and Drossel, B.}
\newblock A minimal model of burst-noise induced bistability.
\newblock {\em \textit{PloS one}}, {\bf 12(4)}:e0176410, 2017.

\bibitem{G77}
{\sc Gillespie, D.T.}
\newblock Exact stochastic simulation of coupled chemical reactions.
\newblock {\em \textit{Journal of Physical Chemistry}}, {\bf 81}:2340--2361, 1977.

\bibitem{G92}
{\sc Gillespie, D.T.}
\newblock A rigorous derivation of the chemical master equation.
\newblock {\em \textit{Physica A: Stat. Mech. Appl.}}, {\bf 188}:404--425, 1992.

\bibitem{GMK17}
{\sc Gupta, A., Mikelson, J. and Khammash, M.}
\newblock A finite state projection algorithm for the stationary solution of
  the chemical master equation.
\newblock {\em The Journal of Chemical Physics}, {\bf 147}(15):154101, 2017.

\bibitem{H57}
{\sc Harris, T.E.}
\newblock {Transient Markov chains with stationary measures}.
\newblock {\em Proceedings of the American Mathematical Society},
  {\bf 8}(5):937--942, 1957.

\bibitem{JH07}
{\sc Jahnke, T. and Huisinga, W.}
\newblock Solving the chemical master equation for onomolecular reaction
  systems analytically.
\newblock {\em \textit{J. Math. Biol.}}, {\bf 54}:1--26, 2007.

\bibitem{Ke11}
{\sc Kelly, F.P.}
\newblock {\em \textit{Reversibility and Stochastic Networks}}.
\newblock Applied Stochastic Methods Series. Cambridge University Press, 
  2011.

\bibitem{Khinchin}
{\sc Khinchin, A. Ya.}
\newblock {\em \textit{Continued Fractions}}.
\newblock Dover Publications, revised edition, 1997.

\bibitem{K93}
\textsc{Kittappa, R. K.}
\newblock A representation of the solution of the nth order linear difference equation with variable coefficients.
\newblock {\em \textit{Linear algebra and its applications}}, {193}:211--222, 1993.

\bibitem{KTSB19}
{\sc Kuntz, J., Thomas, P. , Stan, Guy-Bart and Barahona, M.}
\newblock {Stationary distributions of continuous-time Markov chains: a review
  of theory and truncation-based approximations}.
\newblock {\em SIAM Review}, {\bf 63}(1):3--64, 2021.

\bibitem{CM14}
{\sc L{\'o}pez-Caamal,F. and Marquez-Lago,T.~T.}
\newblock Exact probability distributions of selected species in stochastic
  chemical reaction networks.
\newblock {\em \textit{Bulletin of Mathematical Biology}}, {\bf 76}:2334--2361,
  2014.

\bibitem{marrero}
\textsc{Marrero, J.A.} and \textsc{Tomeo, V.}
\newblock On compact representations for the solutions of linear difference equations with variable coefficients.
\newblock {\em \textit{Journal of Computational and Applied Mathematics}}, {318}:411--421,  2017.

\bibitem{meyn}
{\sc Meyn, S. P. and Tweedie, R. L.}
\newblock {\em Markov Chains and Stochastic Stability}.
\newblock Cambridge University Press, 2nd edition, 2009.

\bibitem{M63}
{\sc Miller Jr, R. G.}
\newblock Stationary equations in continuous-time markov chains.
\newblock {\em Transactions of the American Mathematical Society},
  {\bf 109}(1):35--44, 1963.

\bibitem{markov}
{\sc Norris,J.R.}
\newblock {\em Markov Chains}.
\newblock Cambridge Series in Statistical and Probabilistic Mathematics.
  Cambridge University Press, 2009.

\bibitem{PCMV15}
{\sc Pastor-Satorras, R., Castellano, C., Van Mieghem, P. and Vespignani, A.}
\newblock Epidemic processes in complex networks.
\newblock {\em \textit{Rev. Mod. Phys.}}, {\bf 87}:925--979, 2015.

\bibitem{SS08}
{\sc Shahrezaei, V. and Swain, P.S.}
\newblock Analytical distributions for stochastic gene expression.
\newblock {\em \textit{PNAS}}, {\bf 105}:17256--17261, 2008.

\bibitem{W86}
{\sc Whittle, P.}
\newblock {\em \textit{Systems in Stochastic Equilibrium}}.
\newblock Wiley Series in Probability and Mathematical Statistics: Applied
  Probability and Statistics. John Wiley $\&$ Sons, Inc., Chichester, 1986.

\bibitem{XHW20b}
{\sc Xu, C., Hansen, M.C. and Wiuf, C.}
\newblock {Structural classification of continuous time {M}arkov chains with
  applications}.
\newblock {\em \textit{Stochastics}}, {\bf 94}:1003--1030, 2022.

\bibitem{HWX}
{\sc Xu, C, Hansen, M.C. and Wiuf, C.}
\newblock {Full classification of dynamics for one-dimensional continuous-time
  Markov chains with polynomial transition rates}.
\newblock {\em Advances in Applied Probability}, {\bf 55}:321--355, 2023.

\bibitem{XHW20c}
{\sc Xu, C, Hansen, M.C. and Wiuf, C.}
\newblock {The asymptotic tails of limit distributions of continuous time
  Markov chains}.
\newblock {\em Advances in Applied Probability}, pages 1--42, 2023.
\newblock {doi:10.1017/apr.2023.42}.

\end{thebibliography}
\end{document}